\theoremstyle{plain}
\newtheorem{thm}{Theorem}[section]
\newtheorem*{introthm}{Theorem}
\newtheorem{cor}[thm]{Corollary}
\newtheorem{prop}[thm]{Proposition}
\newtheorem{lmm}[thm]{Lemma}
\newtheorem{conj}[thm]{Conjecture}
\theoremstyle{definition}
\newtheorem{definition}[thm]{Definition}
\newtheorem{remark}[thm]{Remark}
\newtheorem{example}[thm]{Example}
\numberwithin{equation}{section}
\begin{document}

\title[MTS and the Eisenstein Integers]{Distributive Mendelsohn triple systems and the Eisenstein integers}
\author[A. W. Nowak]{Alex W. Nowak}
\address{Department of Mathematics\\
Iowa State University\\
Ames, Iowa 50011, U.S.A.}
\email{anowak@iastate.edu}

\keywords{quasigroup, Mendelsohn triple system, distributive, Eisenstein integers, Moufang loop, congruence subgroup, orthogonal Latin square}
\subjclass[2010]{20N05, 05B07}

\begin{abstract}
We define a Mendelsohn triple system (MTS) with self-distributive quasigroup multiplication and order coprime with $3$ to be distributive, non-ramified (DNR).  We classify, up to isomorphism, all DNR MTS and enumerate isomorphism classes (extending the work of Donovan, Griggs, McCourt, Opr\v{s}al, and Stanovsk\'{y}).  The classification is accomplished via the representation theory of the Eisenstein integers, $\mathbb{Z}[\zeta]=\mathbb{Z}[X]/(X^2-X+1)$.  Containing the class of DNR MTS is that of MTS with an entropic (linear over an abelian group) quasigroup operation.   Partial results on the classification of entropic MTS with order divisible by $3$ are given, and a complete classification is conjectured.  We also prove that for any entropic MTS, the qualities of being non-ramified, pure, and self-orthogonal are equivalent.  We introduce the varieties $\mathbf{RE}$ and $\mathbf{LE}$ of (resp. right and left) Eisenstein quasigroups, whose respective linear representation theories correspond to the alternative presentation $\mathbb{Z}[X]/(X^2+X+1)$ of the Eisenstein integers.
\end{abstract}

\maketitle

\section{Introduction}
The \emph{left self-distributive} (LD) rule
\begin{equation}\label{Eq:TheLeftDistRule}
    x(yz)=(xy)(xz)
\end{equation}
is a well-studied phenomenon appearing in the theory of knots \cite{Joyce}, Hopf algebras \cite{Racks}, symmetric spaces \cite{SymmSpaces}, term rewriting systems \cite[Ch.~V]{Dehernoy}, quasigroups \cite{Stanovsky}, and combinatorial designs \cite{Donovan}.  The results of this paper lie at the intersection of the latter two subjects, and what is more, bring algebraic number theory into the fold of disciplines connected by self-distributivity. 

\indent In varieties (we use variety in the universal algebraic sense: a class of algebras defined by identities) of idempotent algebras, wherein  

\begin{equation}\label{Eq:Idempotence}
    x^2=x
\end{equation}
holds, \eqref{Eq:TheLeftDistRule} follows from the \emph{entropic} (sometimes referred to as \emph{medial}) identity

\begin{equation}\label{Eq:Medial}
    (xy)(zu)=(xz)(yu).
\end{equation}
The \emph{right self-distributive} (RD) law
\begin{equation}\label{Eq:RDistRule}
    (yz)x=(yx)(zx)
\end{equation}
also follows from \eqref{Eq:Idempotence} and \eqref{Eq:Medial}.  Quasigroups in which both   \eqref{Eq:TheLeftDistRule} and \eqref{Eq:RDistRule} hold are referred to as \emph{distributive quasigroups}.  Idempotent, entropic quasigroups are distributive.  We shall concern ourselves with distributivity in the variety of \emph{Mendelsohn quasigroups}.  Mendelsohn quasigroups realize Mendelsohn triple systems as algebraic objects.  The variety is specified by, in addition to quasigroup axioms (v.i. Section \ref{SubSec:QgpsMTS}), idempotence \eqref{Eq:Idempotence} and the \emph{semisymmetric} identity 

\begin{equation}\label{Eq:SemiSymm}
    y(xy)=x.
\end{equation}
\noindent A Mendelsohn quasigroup is left distributive if and only if it is right distributive, and an entropic Mendelsohn quasigroup is distributive.

Distributive quasigroups admit linear representations over (in general, nonassociative) commutative Moufang loops (v.i. Definition \ref{Def:LinQgp}), while entropic quasigroups are linear over abelian groups.  Section \ref{Sec:Background} is, in part, devoted to fleshing out the details of linear quasigroup theory.  A result due to Fischer \cite{Fischer}, Galkin \cite{Galkin}, and Smith \cite{FDQ} (reproduced below as Theorem~\ref{Th:FSG}) breaks up the study of finite, distributive quasigroups into two cases: those which are linear over abelian groups of prime power order (the entropic case), and those which are linear over commutative Moufang loops of order $3^n$ for $n\geq4$ (distributive, nonentropic).  This result allows us to restrict our attention to abelian groups, and, in turn, to the realm of entropic quasigroups, and yet still obtain a classification of all distributive Mendelsohn quasigroups of order coprime to $3$.  This, our main result is a direct product decomposition theorem.  A (due to the absence of certain technicalities, slightly weaker) statement of this theorem is the following: 

\begin{introthm}\label{IntroThm:MainResult1}
Every distributive Mendelsohn triple system of order coprime with $3$ is isomorphic to a direct product of systems defined on abelian groups of the form $(\mathbb{Z}/_{p^n})^i$, for $i\in \{1, 2\}.$
\end{introthm}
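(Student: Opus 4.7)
The plan is to translate the Mendelsohn axioms into a statement about modules over the Eisenstein integers, and then invoke the structure theorem for finitely generated modules over a PID.

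First I would invoke the Fischer--Galkin--Smith theorem (Theorem~\ref{Th:FSG}) in the statement above: because the order of the system is coprime with $3$, the underlying quasigroup cannot be one of the nonentropic distributive quasigroups linear over a commutative Moufang loop of order $3^n$. Hence the Mendelsohn quasigroup is entropic, and so it admits a linear representation $x\cdot y=\alpha(x)+\beta(y)$ over some finite abelian group $A$ of order coprime with $3$, with $\alpha,\beta\in\mathrm{End}(A)$ commuting. Idempotence \eqref{Eq:Idempotence} forces $\beta=1-\alpha$, and a direct computation using semisymmetry \eqref{Eq:SemiSymm} shows $y(xy)=\alpha(y)+\beta\alpha(x)+\beta^2(y)$, so one obtains simultaneously $\beta\alpha=1$ and $\alpha+\beta^2=0$. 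Substituting $\beta=1-\alpha$ in the latter yields the single relation
\begin{equation*}
\alpha^2-\alpha+1=0.
\end{equation*}

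Consequently, the assignment $\zeta\mapsto\alpha$ endows $A$ with the structure of a module over $\mathbb{Z}[\zeta]=\mathbb{Z}[X]/(X^2-X+1)$, and conversely, every $\mathbb{Z}[\zeta]$-module structure on a finite abelian group $A$ of order coprime with $3$ produces an entropic Mendelsohn quasigroup via $x\cdot y=\zeta x+(1-\zeta)y$. Moreover, a direct sum of $\mathbb{Z}[\zeta]$-modules gives precisely the direct product of the corresponding Mendelsohn quasigroups, since the multiplication is defined componentwise in the linear representation. Thus it suffices to decompose $A$ as a $\mathbb{Z}[\zeta]$-module.

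Since $\mathbb{Z}[\zeta]$ is a Euclidean domain, the structure theorem for finitely generated modules over a PID gives $A\cong\bigoplus_{i}\mathbb{Z}[\zeta]/(\pi_i^{n_i})$ for prime elements $\pi_i\in\mathbb{Z}[\zeta]$. Because $|A|$ is coprime with $3$, the ramified prime $1-\zeta$ (lying over $3$) does not appear; every $\pi_i$ lies over a rational prime $p\neq 3$. I would then finish with a case analysis on the splitting behavior in the quadratic extension $\mathbb{Q}(\zeta)/\mathbb{Q}$: if $p\equiv 1\pmod{3}$, then $p=\pi\bar\pi$ splits and $\mathbb{Z}[\zeta]/(\pi^n)\cong\mathbb{Z}/_{p^n}$ as an abelian group; if $p\equiv 2\pmod{3}$, then $p$ is inert and $\mathbb{Z}[\zeta]/(p^n)\cong(\mathbb{Z}/_{p^n})^2$. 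In both cases the underlying abelian group of each cyclic summand has the desired form $(\mathbb{Z}/_{p^n})^i$ with $i\in\{1,2\}$, completing the decomposition.

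The main obstacle I anticipate is not in the module-theoretic decomposition itself, which is routine once the correspondence with $\mathbb{Z}[\zeta]$-modules is in place, but in justifying the reduction to the entropic case cleanly and in verifying that the direct sum of $\mathbb{Z}[\zeta]$-modules really translates into a direct product in the variety of Mendelsohn quasigroups (rather than merely abelian groups). Both points require checking that the defining identities behave well with respect to the chosen linear representations; the ingredients are already latent in the Eisenstein description of $\alpha$, but need to be organized carefully to produce an MTS-level, and not just abelian-group-level, isomorphism.
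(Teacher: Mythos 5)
Your proposal is correct and follows essentially the same route as the paper: reduce to the entropic case via Fischer--Galkin--Smith, turn the Bruck--Murdoch--Toyoda linear representation with $\alpha^2-\alpha+1=0$ into a $\mathbb{Z}[\zeta]$-module, apply the structure theorem over the PID $\mathbb{Z}[\zeta]$, and read off the cyclic quotients according to the splitting of $p$ modulo $3$ (the paper packages the module-to-quasigroup translation as the functor of Theorem~\ref{Th:CatRelat} and Corollary~\ref{Cor:ModDecompMTS}, and the quotient computation as Theorem~\ref{Th:EisQuos}). One cosmetic correction: in the paper's presentation $\zeta=e^{\pi i/3}$, where $\zeta(1-\zeta)=1$, the element $1-\zeta$ is a unit and the ramified prime above $3$ is $1+\zeta$; this slip does not affect your argument, since all you use is that no prime lying over $3$ can occur when $|A|$ is coprime with $3$.
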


Entropic, Mendelsohn quasigroups are ``essentially" --in a precise, categorical sense (v.i. Theorem \ref{Th:CatRelat})-- the same as finite modules over the Eisenstein integers, the Euclidean domain consisting of the integral elements of $\mathbb{Q}(\sqrt{-3})$.  Section \ref{Sec:Background} surveys the algebraic structure of the Eisenstein integers.  In Section \ref{Sec:BMM}, we will have enough context to explain our dubbing distributive MTS of order not divisible by $3$ \emph{distributive, non-ramified}.  We then go on to exploit Eisenstein module structure (that of modules over a PID) to obtain the aforementioned main result and enumerate the isomorphism classes described by this theorem.  Section \ref{Sec:Ramified} presents progress on, and complications related to, the case of $3$-divisibility.  In section \ref{Sec:IncProp}, we further leverage our direct product theorems to better understand combinatorial properties (purity, self-orthogonality, self-conversity) in distributive MTS.  We obtain the following characterization of DNR MTS:

\begin{introthm}\label{IntroThm:MainResult2}
An entropic Mendelsohn triple system has order coprime with $3$ if and only if it is pure if and only if it is self-converse (orthogonal to its converse).
\end{introthm}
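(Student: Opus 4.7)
My plan is to show both purity and self-converse orthogonality are equivalent to injectivity of multiplication by $1-2\zeta$ on the underlying Eisenstein module, and that this in turn is equivalent to the condition $|Q|$ coprime with $3$. Recall (from the categorical equivalence in Theorem~\ref{Th:CatRelat}) that an entropic Mendelsohn quasigroup is nothing but a finite $\mathbb{Z}[\zeta]$-module $Q$ equipped with the operation $xy = (1-\zeta)x + \zeta y$, where $\zeta^2 - \zeta + 1 = 0$.

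For purity, I would compute directly:
\[
xy - yx = (1-2\zeta)(x - y),
\]
so an equality $xy = yx$ with $x \neq y$ occurs precisely when $1-2\zeta$ has a nonzero kernel on $Q$. For self-converse orthogonality, I would analyze the map $\Phi \colon Q \oplus Q \to Q \oplus Q$ sending $(x,y) \mapsto (xy, yx)$. Subtracting its two coordinate equations shows every element of $\ker \Phi$ satisfies $(1-2\zeta)(a-b) = 0$, while conversely any $v$ with $(1-2\zeta)v = 0$ produces $(v,-v) \in \ker \Phi$. A small check that $a = b$ in $\ker \Phi$ forces $a = b = 0$ shows the two kernels vanish together. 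Since $Q \oplus Q$ is finite, bijectivity of $\Phi$ is equivalent to triviality of its kernel, and hence to injectivity of $1 - 2\zeta$ on $Q$. (Equivalently, one may identify $\Phi$ with the $\mathbb{Z}[\zeta]$-linear endomorphism with matrix $\bigl(\begin{smallmatrix} 1-\zeta & \zeta \\ \zeta & 1-\zeta \end{smallmatrix}\bigr)$ and determinant $1-2\zeta$, and invoke Smith normal form.)

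To close the loop, I would recall from Section~\ref{Sec:Background} that $1-2\zeta$ has norm $3$ and is, up to units, the unique prime of $\mathbb{Z}[\zeta]$ lying over $3$, namely the ramified prime. By the structure theorem for finitely generated modules over the PID $\mathbb{Z}[\zeta]$, a finite $\mathbb{Z}[\zeta]$-module $Q$ has nonzero $(1-2\zeta)$-torsion iff some primary summand $\mathbb{Z}[\zeta]/\pi^k$ with $\pi$ an associate of $1-2\zeta$ appears, iff $3 \mid |Q|$. Chaining the equivalences gives the three-way characterization.

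I do not anticipate a deep obstacle; the crux is simply the unified role played by $1-2\zeta$ as both the discrepancy controlling $xy - yx$ and the invariant controlling bijectivity of $\Phi$. The one place requiring care is the kernel calculation for $\Phi$ (or the equivalent determinantal argument), where one must rule out the ``diagonal'' possibility $a = b$ before concluding that nontrivial kernel elements of $\Phi$ detect nontrivial kernel elements of $1 - 2\zeta$.
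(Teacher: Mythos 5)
Your proposal is correct, and it takes a genuinely different route from the paper. The paper proves this (as Theorems \ref{Th:PureMTSCharac} and \ref{Th:SelfOrthCharac}) by first invoking the primary decomposition of Corollary \ref{Cor:ModDecompMTS}, showing that purity and self-orthogonality are quasivariety-type conditions preserved and reflected by finite direct products, and then handling each standard factor case by case: invertibility of $2R-1$ in $\mathrm{Lin}(\mathbb{Z}/_{p^n},a)$ and $\mathrm{Lin}(\mathbb{Z}/_{p^n}[\zeta])$ for purity (Proposition \ref{Prop:PureMTS}), a determinant/trace computation (resting on Lemma \ref{Lmm:CharPoly}, which in turn uses a GAP verification for the $3$-power case) to produce distinct commuting elements in the ramified factors (Proposition \ref{Prop:ImpureMTS}), and explicit root and companion-matrix calculations for self-orthogonality in the non-ramified factors. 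You instead reduce both combinatorial properties to a single module-theoretic invariant: $xy-yx=(1-2\zeta)(x-y)$ shows purity is injectivity of $1-2\zeta$, and your kernel analysis of $\Phi:(x,y)\mapsto(xy,yx)$ (where $\ker\Phi=\{(a,-a):(1-2\zeta)a=0\}$, the diagonal case being excluded since $a=b$ forces $a=0$) shows self-orthogonality is the same condition, after which one application of the structure theorem identifies $(1-2\zeta)$-torsion (the ramified prime, norm $3$) with $3$ dividing the order. Your argument is shorter and uniform, avoiding the mod-$3$ case analysis and the matrix computations entirely; the paper's factor-by-factor approach is heavier but yields finer local information about which standard factors fail purity or self-orthogonality, which it reuses elsewhere (e.g., in Theorem \ref{Th:Self-ConverseCharac}). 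One small correction of wording: Theorem \ref{Th:CatRelat} gives a faithful, dense functor, not a categorical equivalence; but you only need that every nonempty entropic Mendelsohn quasigroup is $\mathrm{Lin}(M,R)$ for a finite $\mathbb{Z}[\zeta]$-module $(M,R)$, which that theorem (with Theorem \ref{Thm:TMB}) does provide, so nothing in your argument is affected.
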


\noindent Section \ref{Sec:RL} concludes the paper by considering varieties of quasigroups that correspond to an alternative presentation for the Eisenstein integers.

\section{Background}\label{Sec:Background}
\subsection{Quasigroups and Mendelsohn triple systems}\label{SubSec:QgpsMTS}
\indent A \emph{quasigroup} \\$(Q, \cdot, /, \backslash)$ is a (potentially empty) set $Q$ equipped with three binary operations: $\cdot$ denoting \emph{multiplication}, $/$ \emph{right division}, and $\backslash$ \emph{left division}; these operations satisfy the identities
\begin{equation}
\label{IL}
y\backslash(y\cdot x)=x;
\end{equation}
\begin{equation}
    \label{SL}
    y\cdot(y\backslash x)=x; 
\end{equation}
\begin{equation}
\label{IR}
(x\cdot y)/y=x;
\end{equation}
\begin{equation}
\label{SR}
(x/y)\cdot y=x.
\end{equation}

\noindent A \emph{homomorphism} between quasigroups is a function that respects multiplication and the two divisions.  These homomorphisms furnish a category $\mathbf{Q}$. 

\begin{remark}\label{Rmk:ConcatNotation}
When appropriate, we convey multiplication of quasigroup elements by concatenation, and in some instances, use both $\cdot$ and concatenation, our convention being that concatenation takes precedence over $\cdot$.  For example, $xy\cdot z\equiv(xy)z\equiv(x\cdot y)\cdot z.$
\end{remark} 

The variety of Mendelsohn quasigroups, denoted $\mathbf{MTS}$, is generated by \eqref{Eq:Idempotence}, \eqref{Eq:SemiSymm}, and \eqref{IL}-\eqref{SR}.  The homomorphisms between the objects in this variety form a category, which we also denote by $\mathbf{MTS}$.

\indent A finite Mendelsohn quasigroup $Q$ gives rise to a \emph{Mendelsohn triple system} $(Q, \mathcal{B}),$ where $\mathcal{B}$ is a collection of \emph{blocks}, or cyclic triples of \emph{points} from $Q$, satisfying the property that for any ordered pair of distinct points $(x, y)\in Q^2\setminus \widehat{Q}$, there is a unique block $(x\phantom{.}y\phantom{.}xy)\equiv(xy\phantom{.}x\phantom{.}y)\equiv(y\phantom{.}xy\phantom{.}x)\in \mathcal{B}$.  Conversely, any Mendelsohn triple system $(Q, \mathcal{B})$ corresponds to a finite Mendelsohn quasigroup on $Q$: let $x^2=x$ for any $x\in Q$, and $xy$ be the element in the block containing $(x, y)$ when $x\neq y$ (cf. \cite[Th.~1]{MTS}).  Divisions in the Mendelsohn quasigroup are given by the opposite of multiplication: $x/y=x\backslash y=yx$.  That is, every Mendelsohn quasigroup takes the form $(Q, \cdot, \circ, \circ)$, where $x\circ y=y\cdot x$.

\begin{remark}\label{Rmk:QgpvsMTS}
As the preceding lines indicate, Mendelsohn triple systems are conventionally restricted to nonempty, finite point sets of cardinality greater than $1$.  However, both the empty set and the trivial quasigroup (containing one element) are Mendelsohn quasigroups.  Moreover, there are infinite Mendelsohn quasigroups; take $\mathbb{C}$ under the multiplication $$xy=xe^{\pi i/3}+ye^{-\pi i/3}.$$ We do not directly deal with any infinite Mendelsohn quasigroups in this paper, and the empty and trivial quasigroups are, as Mendelsohn quasigroups, classified, so in order to employ the convenient acronym MTS, we will use the terms Mendelsohn quasigroup and Mendelsohn triple system interchangeably.  Any reference to isomorphism of MTS should be understood as an appeal to the quasigroup structure.   
\end{remark}

\subsection{Linearity and distributivity in quasigroups}\label{Sec:LinDistQgps} 
This section recasts \cite[Sec.~1-2.1]{Donovan}.  In the course of exposition, we will demonstrate the connection between linearity, entropicity, and distributivity in quasigroups.

\begin{remark}\label{Rmk:AlgNotation}
Throughout the paper, arguments will appear to the left of their functions.  For example, the image of a point $x$ under the function $f$ is written $xf,$ or even as $x^f$.  Exponentiation of an argument by its function can help to curb the proliferation of brackets to which works in nonassociative algebra are susceptible.       
\end{remark}

Let $(M, +, /, \backslash, 0)$ be a \emph{loop} --—a quasigroup with a multiplicative identity—-- where $0$ denotes the neutral element.  Assume, furthermore, that the multiplication is commutative.  If, for all $x, y, z\in M,$ 
\begin{equation}\label{Eq:CML}
    (x+x)+(y+z)=(x+y)+(x+z),
\end{equation}
then $(M, +, /, \backslash, 0)$ is a \emph{commutative Moufang loop} (CML).  The \emph{nucleus} of a CML $M$,
$$N(M)=\{x\in M\mid \forall y, z\in M\phantom{.} (x+y)+z=x+(y+z)\},$$ is the set of elements that associate with all of $M$.  An automorphism $R$ of a CML $M$ is \emph{nuclear} if $\{x+xR\mid x\in M\}\subseteq N(M)$.  Notice that any abelian group is a CML, and any CML-automorphism is also an abelian group automorphism, and is therefore nuclear.

\indent Let $\top$ denote the \emph{trivial quasigroup}, containing a single element. A quasigroup $Q$ with a pointed idempotent selected by a $\mathbf{Q}$-morphism $\top \to Q$ is a \emph{pique}, short for \emph{pointed, idempotent quasigroup}.

\begin{definition}\label{Def:LinQgp}
Let $(M, +)$ be a commutative Moufang loop.  Fix nuclear CML-automorphisms $R, L\in \text{Aut}(M)$.  The operation
\begin{equation}\label{Eq:LinQgp}
    xy=xR+yL
\end{equation}
endows $M$ with the structure of a quasigroup, $\text{Lin}(M, R, L)$.  Moreover, the map $\top\to \text{Lin}(M, R, L)$ selecting $0\in M$ gives $\text{Lin}(M, R, L)$ the structure of a pique, which is referred to as a \emph{CML-linear pique} on $(M, +)$.  If $(M, +)$ is an abelian group, then we simply say that $\text{Lin}(M, R, L)$ is a \emph{linear pique} on $(M, +)$.  
\end{definition}

\begin{remark}\label{Rmk:LinearAbbrevs}
($\mathrm{a}$) We adopt the convention that $\text{Lin}(M, R)$ denotes a CML-linear pique in which $L=1-R.$  This is because a CML-linear pique is idempotent if and only if $L=1-R$, and, until Section \ref{Sec:RL}, we are concerned only with Mendelsohn quasigroups.   

\vspace{3mm}

($\mathrm{b}$) When referring to a linear pique $\text{Lin}(M, R)$ that is Mendelsohn, we will often employ the phrase \emph{linear MTS}.

\vspace{3mm}

($\mathrm{c}$) Readers familiar with the quasigroup literature may notice that we have avoided the term \emph{affine quasigroup}, one more frequently used to describe the structures of Definition \ref{Def:LinQgp}.
However, since our work deals entirely with pointed structures, we felt it best to avoid the term affine and any origin-free geometric connotations that go along with it.          
\end{remark}

\indent Any CML-linear pique of the form $\text{Lin}(M, R)$ is idempotent and distributive.  The Belousov theorem (v.i. Th. \ref{Th:B&S}) establishes the converse for nonempty quasigroups. 

\indent  What are the conditions necessary for a CML-linear pique $\text{Lin}(M, R)$ to be Mendelsohn?  Well, as we have already noted in Remark \ref{Rmk:LinearAbbrevs}$(\mathrm b)$, such a structure is automatically idempotent.  Furthermore, semisymmetry \eqref{Eq:SemiSymm} forces
\begin{align*}
    xy\cdot x&=xR^2+yR-yR^2+x-xR \\
    &=y,
\end{align*}
and setting $y=0$ reveals $R^2-R+1=0$.  Conversely, any CML $M$ with nuclear automorphism $R$ satisfying $R^2-R+1=0$, or, equivalently, $1-R=R^{-1}$ furnishes an idempotent, semisymmetric quasigroup $\text{Lin}(M, R)$ (cf.\cite[Prop.~2.2]{Donovan}).

\begin{remark}\label{Rmk:HistInterlude}
Sade appears to be the first to have recognized the identity \\$X^2-X+1$ in relation to the construction of semisymmetric, idempotent quasigroups \cite{Sade2}.
In Mendelsohn's presentation of his eponymous triple systems, he used roots of this quadratic over finite fields to prove the existence of simple systems (i.e., MTS with no subsystems), and systems with no ``repeated blocks" (v.i. \emph{pure} MTS in Section \ref{Sec:Purity}) \cite[Ths.~6,~7]{MTS}.
\end{remark}

\begin{example}\label{Ex:MTS4}
Since $X^2-X+1\equiv X^2+X+1$ splits in $\mathsf{GF}(2^2),$ we can choose a root $\zeta$, and define a Mendelsohn quasigroup $\text{Lin}(\mathsf{GF}(2^2), \zeta)$ via \begin{equation*}
    xy=x\zeta+y(1-\zeta).
\end{equation*}
An equivalent matrix characterization of this linear MTS is afforded by $\text{Lin}((\mathbb{Z}/_2)^2, T),$ where $$T=\left(\begin{array}{cc}
    0 & 1 \\
    1 & 1
\end{array}\right).$$
\end{example}

\indent We review some fundamental results on the classification of entropic and distributive quasigroups.  With them, we will be able to clearly state the classification program initiated by Donovan et. al.

\begin{thm}[Bruck-Murdoch-Toyoda \cite{Bruck}, \cite{Murdoch}, \cite{Toyoda}]\label{Thm:TMB}
A nonempty quasigroup $(M, \cdot)$ is entropic if and only if there is an abelian group $(M, +)$ affording a linear pique representation $\text{Lin}(M, R, L)$ of the multiplication $\cdot$, and $RL=LR$.
\end{thm}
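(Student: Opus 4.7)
The plan is to handle the two implications separately. For the easy (backward) direction, assume $xy = xR + yL$ with $R, L$ commuting automorphisms of an abelian group $(M, +)$. Then expanding both sides of the entropic law \eqref{Eq:Medial} gives $(xy)(zu) = xR^2 + yLR + zRL + uL^2$ and $(xz)(yu) = xR^2 + zLR + yRL + uL^2$. Since $y$ and $z$ are arbitrary, these agree precisely when $LR = RL$, so commutativity of $R$ and $L$ is both necessary and sufficient on this side.

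For the converse, I would start by picking an idempotent $0 \in M$ (to serve as the identity of the forthcoming abelian group and as the pointed element of the pique), and defining the right and left translations by $0$: $xR := x \cdot 0$ and $xL := 0 \cdot x$. These are bijections of $M$ by the quasigroup axioms \eqref{IL}--\eqref{SR}. Then I would introduce the candidate addition
\[
x + y := xR^{-1} \cdot yL^{-1},
\]
so that $0$ is a two-sided identity of $+$ (using $0R = 0\cdot 0 = 0 = 0L$) and so that, tautologically, $xR + yL = x \cdot y$.

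The heart of the argument is verifying that $(M, +)$ is an abelian group and that $R, L$ are commuting automorphisms of it. Associativity and commutativity of $+$ should follow from judicious instances of \eqref{Eq:Medial} applied to the defining expression $xR^{-1} \cdot yL^{-1}$, and the automorphism property of $R, L$ with respect to $+$ reduces to the same identity once $+$ is known to be associative. The commutativity $RL = LR$ is almost immediate: applying \eqref{Eq:Medial} with first and third arguments both equal to $0$ yields $(0\cdot x)(0\cdot 0) = (0\cdot 0)(x\cdot 0)$, and idempotence of $0$ collapses this to $xLR = xRL$.

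The main obstacle is the combinatorial bookkeeping in the associativity/commutativity computations: each requires choosing the correct substitution in the four-variable identity $(ab)(cd) = (ac)(bd)$ and carefully tracking how $R^{\pm 1}$ and $L^{\pm 1}$ interact with $\cdot$. A secondary subtlety is the tacit use of an idempotent in $(M, \cdot)$ to anchor the pique structure; without one, the same recipe produces only $xy = xR + yL + c$ for a constant $c \in M$, and the pique case singles out $c = 0$.
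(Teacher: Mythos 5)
The paper does not prove Theorem \ref{Thm:TMB} at all; it is quoted from Bruck, Murdoch and Toyoda, so your sketch has to be judged against the classical argument. Your backward direction is fine. The forward direction, however, breaks at its very first step: an entropic quasigroup need not contain an idempotent. For instance $x\cdot y=2x+2y+1$ on $\mathbb{Z}/_{9}$ (or $x\cdot y=-x-y+1$ on $\mathbb{Z}$) is entropic, yet $x\cdot x=x$ has no solution; and since any quasigroup of the form $xy=xR+yL$ has the idempotent $0$ (because $0\cdot 0=0R+0L=0$), such examples admit no linear pique representation whatsoever. So what you call a ``secondary subtlety'' at the end is in fact the decisive issue: your construction, anchored at an idempotent, proves the theorem only for entropic quasigroups possessing an idempotent (which does cover everything the paper uses, since Mendelsohn quasigroups are idempotent), while the general Toyoda--Bruck--Murdoch theorem must be proved in the affine form $xy=xR+yL+c$, e.g.\ by basing the construction at an arbitrary element $e$ and setting $x+y=(x/e)\cdot(e\backslash y)$, whose neutral element is $e\cdot e$; the constant $c$ genuinely appears and cannot be normalized away without an idempotent.

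Beyond that, the substantive content of the theorem --- that $+$ defined by $x+y=xR^{-1}\cdot yL^{-1}$ is associative and commutative, and that $R,L$ are automorphisms of it --- is only promised (``judicious instances of \eqref{Eq:Medial}''), and that is exactly where the work lies. The pieces you do carry out are correct: the instance $(0\cdot x)(0\cdot 0)=(0\cdot 0)(x\cdot 0)$ does give $xLR=xRL$, and you should note that you need this commutation \emph{before} checking the automorphism property, since mediality gives $(x+y)R=x\cdot(yL^{-1}R)$ while $xR+yR=x\cdot(yRL^{-1})$, so the two agree precisely when $RL=LR$. In short, the skeleton is the standard one, but as written the proposal defers the main computations and silently strengthens the hypothesis from ``entropic'' to ``entropic with an idempotent''; in the idempotent-free case the pique-form conclusion is simply unavailable.
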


\begin{thm}[Belousov \cite{Belousov}]\label{Th:B&S}
A nonempty quasigroup $(M, \cdot)$ is distributive if and only there is a commutative Moufang loop $(M, +)$ affording a (idempotent) linear pique representation $\text{Lin}(M, R)$ of the multiplication $\cdot$.
\end{thm}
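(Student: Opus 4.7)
The ($\Leftarrow$) direction is a direct calculation that has already been foreshadowed: when $xy=xR+y(1-R)$ with $R$ a nuclear automorphism of a CML $(M,+)$, both \eqref{Eq:TheLeftDistRule} and \eqref{Eq:RDistRule} follow by expanding each side and using commutativity of $+$ together with the nuclearity of $R$ to reassociate the resulting sums.

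For the ($\Rightarrow$) direction, the plan is the classical isotopy-to-a-loop strategy. First observe that distributivity together with quasigroup cancellation forces idempotence: specializing $y=z=x$ in \eqref{Eq:RDistRule} yields $x^2\cdot x=x^2\cdot x^2$, whence $x=x^2$. Now fix a basepoint $0\in M$ (available by nonemptiness) and define $R(x):=x\cdot 0$ and $L(x):=0\cdot x$. The quasigroup axioms make $R$ and $L$ bijections, while \eqref{Eq:TheLeftDistRule} and \eqref{Eq:RDistRule} promote them to quasigroup endomorphisms; idempotence gives $0R=0L=0$. Define
$$x+y:=R^{-1}(x)\cdot L^{-1}(y).$$
A short check then shows $0$ is a two-sided identity for $+$ and that $xy=R(x)+L(y)$ holds on the nose, simply by unwinding the definition.

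The main obstacle is to verify that $(M,+,0)$ is a commutative Moufang loop with $R$ (equivalently, $L$) a nuclear automorphism. Commutativity and the loop axioms for $+$ drop out of a careful manipulation of the two distributive laws against the defining isotopy; the Moufang identity \eqref{Eq:CML} and the nuclearity condition $\{x+xR:x\in M\}\subseteq N(M)$ are deeper and constitute the technical heart of Belousov's original argument \cite{Belousov}, where they are extracted by a sequence of substitutions in the distributive identities and a reduction to vanishing associators. Once these are in place, idempotence $x\cdot x=x$ translates via $xy=R(x)+L(y)$ into $R(x)+L(x)=x$ for every $x\in M$, i.e., $L=1-R$ pointwise, so that $Q$ indeed has the form $\text{Lin}(M,R)$ of an idempotent CML-linear pique.
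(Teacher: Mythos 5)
The paper itself offers no proof of this statement: it is quoted as a classical theorem with a citation to Belousov, so there is no internal argument to compare against. Judged on its own terms, your outline follows the standard route, and the elementary steps you do carry out are correct: idempotence from \eqref{Eq:RDistRule} plus left cancellation, the definitions $xR=x\cdot 0$, $xL=0\cdot x$, the fact that distributivity makes these bijections into endomorphisms of $(M,\cdot)$ fixing $0$, the principal isotope $x+y=xR^{-1}\cdot yL^{-1}$ with $0$ as two-sided identity, the identity $xy=xR+yL$, and the translation of idempotence into $L=1-R$. One small step you gloss over but could easily add: $RL=LR$ (since $0(x\cdot 0)=(0x)(0\cdot 0)=(0x)0$ by left distributivity and idempotence), which together with $R,L$ being $\cdot$-endomorphisms is what shows $R$ and $L$ are endomorphisms, hence automorphisms, of the new loop $(M,+)$.

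The genuine gap is that the substance of the theorem is exactly the part you defer: that $(M,+)$ is commutative, satisfies the Moufang law \eqref{Eq:CML}, and that $R$ is \emph{nuclear} in the sense $\{x+xR\mid x\in M\}\subseteq N(M)$ (nuclearity is also what is needed, even in your ($\Leftarrow$) direction, to justify the reassociations and to see that $1-R$ is an automorphism). Writing ``these are extracted by a sequence of substitutions in the distributive identities'' and citing \cite{Belousov} is not a proof of those facts; it restates the claim and points at the source being proved. Everything established up to that point only shows that a distributive quasigroup is a principal isotope, via its translations at an idempotent, of \emph{some} loop with commuting companion automorphisms --- which is true of any quasigroup with an idempotent and does not yet use distributivity in any essential way beyond making $R,L$ endomorphisms. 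Until the commutativity, Moufang, and nuclearity verifications are actually carried out (or the theorem is explicitly treated as a quoted external result, as the paper does), the forward direction remains unproven.
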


\begin{thm}[Fischer-Galkin-Smith \cite{Fischer}, \cite{Galkin}, \cite{FDQ}]\label{Th:FSG}
Let $n=p_1^{r_1}\cdots p_k^{r_k}$, where the $p_1, \dots, p_k$ represent pairwise distinct primes.  Suppose $Q$ is a distributive quasigroup of order $n$.  Then $Q$ is isomorphic to a direct product of distributive quasigroups $Q_1\times\cdots\times Q_k$ so that $|Q_i|=p_i^{r_i}.$  Moreover, if $Q_i$ is not a linear pique, then $p_i=3,$ $r_i\geq 4$, and it is CML-linear.
\end{thm}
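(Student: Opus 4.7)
The strategy is to leverage the linear representation available for any distributive quasigroup and then transport the primary decomposition of commutative Moufang loops to $Q$. By Belousov's theorem (Theorem \ref{Th:B&S}), one writes $Q = \text{Lin}(M, R)$ for a finite CML $(M, +)$ of order $n$ and a nuclear automorphism $R$ (with $L = 1-R$ already built into the $\text{Lin}(M, R)$ notation by Remark \ref{Rmk:LinearAbbrevs}(a)), so that $xy = xR + y(1-R)$.

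Next, I would invoke Bruck's structure theory for finite commutative Moufang loops: $M$ decomposes uniquely as a direct sum $M = M_{p_1} \oplus \cdots \oplus M_{p_k}$ of $p_i$-primary Sylow subloops with $|M_{p_i}| = p_i^{r_i}$. Because every CML-automorphism preserves element orders, $R$ must leave each summand invariant; setting $R_i := R|_{M_{p_i}}$ yields a nuclear automorphism of $M_{p_i}$ (nuclearity descends since $x + xR \in N(M) \cap M_{p_i} \subseteq N(M_{p_i})$ for $x \in M_{p_i}$). The identity $R = R_1 \oplus \cdots \oplus R_k$ together with the distributivity of $\text{Lin}(-,-)$ over direct sums then produces $Q \cong \text{Lin}(M_{p_1}, R_1) \times \cdots \times \text{Lin}(M_{p_k}, R_k)$ as CML-linear piques, and in particular as quasigroups, establishing the direct product decomposition with $Q_i := \text{Lin}(M_{p_i}, R_i)$ of order $p_i^{r_i}$.

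For the refinement, I would appeal to two deeper facts from CML theory, both due to Bruck: (i) every Sylow $p$-subloop of a finite CML with $p \neq 3$ is associative, hence an abelian group, so $Q_i$ is a linear pique in the narrow sense whenever $p_i \neq 3$; and (ii) the smallest non-associative CML has order $3^4 = 81$. Together these imply that if $Q_i$ fails to be a linear pique over an abelian group, then $M_{p_i}$ must be a non-associative CML, forcing $p_i = 3$ and $r_i \geq 4$.

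The principal obstacle is the CML structure theory itself. The decomposition into Sylow subloops depends on a Lagrange-type theorem for commutative Moufang loops combined with Bruck's analysis of subloops generated by elements of prime-power order; the associativity of $p$-Sylow subloops for $p \neq 3$ rests on fairly intricate associator/commutator calculus; and the order bound $|M| \geq 81$ for nonassociative CMLs (a Bruck–Slaby type result) is genuinely substantive. Once these structural inputs are granted, the Belousov representation and the functoriality of $\text{Lin}$ render the remaining assembly essentially formal.
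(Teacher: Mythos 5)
The paper offers no proof of Theorem \ref{Th:FSG} at all: it is imported, with attribution, from Fischer, Galkin and Smith, so there is no internal argument to measure your sketch against. Taken on its own terms, your outline is the standard modern route to the result and is essentially sound. Belousov's theorem (Theorem \ref{Th:B&S}) represents $Q$ as $\text{Lin}(M, R)$ over a finite CML $M$ with a nuclear automorphism $R$; Bruck's structure theorem for finite CMLs writes $M$ as a direct product of an abelian group of order prime to $3$ and a CML of $3$-power order, and splitting the abelian part into its Sylow subgroups gives the primary decomposition $M = M_{p_1}\oplus\cdots\oplus M_{p_k}$ into characteristic subloops (the elements of $p_i$-power order), each preserved by $R$ and hence by $1-R$, so the multiplication decomposes componentwise and nuclearity restricts as you say; this yields $Q\cong\prod_i\text{Lin}(M_{p_i}, R_i)$ with $|Q_i|=p_i^{r_i}$. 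The ``moreover'' clause then follows, as you argue, from exactly the two Bruck-type inputs you name: the component of order prime to $3$ is an abelian group, and every CML of order less than $3^4$ is associative (the smallest nonassociative CML has order $81$), so a factor that is not a linear pique forces $p_i=3$ and $r_i\geq 4$, while remaining CML-linear by construction. Those structural inputs are genuine, deep theorems that your sketch treats as black boxes, which is fair given that they (together with Belousov's representation) carry essentially all of the weight of the original Fischer--Galkin--Smith analysis; with them granted, your assembly is correct and is, in spirit, the proof the cited sources provide.
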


\begin{remark}\label{Rmk:LinMTSSyn}
By Theorems \ref{Thm:TMB} and \ref{Th:B&S}, the reader should regard the phrases ``Mendelsohn linear pique," ``linear MTS," and ``nonempty, entropic Mendelsohn quasigroup" as synonyms.  
\end{remark}

In conjunction with the Chinese Remainder Theorem, Theorems \ref{Th:B&S} and \ref{Th:FSG} tell us that in order to classify distributive Mendelsohn triple systems of arbitrary order, it is enough to classify semisymmetric, idempotent linear piques on abelian groups of prime power order and semisymmstric, idempotent CML-linear piques over CML of exponent $3$, with the former class comprising all entropic MTS.  The following theorem \cite{Kepka} of Kepka and N\v{e}mec characterizes isomorphisms of idempotent CML-linear piques.  

\begin{thm}[Kepka-N\v{e}mec \cite{Kepka}]\label{Th:K&N}
Let $M_1$ and $M_2$ be commutative Moufang loops, and suppose $R_1$ is a nuclear automorphism of $M_1$ and $R_2$ a nuclear automorphism of $M_2$ such that $1-R_1$ and $1-R_2$ are automorphisms of $M_1$ and $M_2,$ respectively.  Then $\text{Lin}(M_1, R_1)\cong \text{Lin}(M_2, R_2)$ if and only if there is a loop isomorphism $\psi:M_1\to M_2$ such that $\psi^{-1}R_1\psi=R_2.$ 
\end{thm}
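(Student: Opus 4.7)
The plan is to prove the easy direction by direct substitution and then handle the forward direction by a re-centering argument. For ``$\Leftarrow$'', given a loop isomorphism $\psi: M_1 \to M_2$ satisfying $\psi^{-1}R_1\psi = R_2$, applying $\psi$ to the linear pique formula $xy = xR_1 + y(1-R_1)$ and using that $\psi$ also intertwines $1-R_1$ with $1-R_2$ immediately produces $(xy)\psi = (x\psi)R_2 + (y\psi)(1-R_2) = (x\psi)(y\psi)$.

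For ``$\Rightarrow$'', let $\phi: \text{Lin}(M_1, R_1) \to \text{Lin}(M_2, R_2)$ be a quasigroup isomorphism. Since every element of an idempotent linear pique is idempotent, $\phi$ need not fix the distinguished origin; I would set $a = 0_1\phi$ and define $\psi: M_1 \to M_2$ by $x\psi = x\phi - a$, using the CML structure on $M_2$ to subtract $a$, so that $0_1\psi = 0_2$ while $\psi$ remains a bijection. The quasigroup-homomorphism identity
\begin{equation*}
(xR_1 + y(1-R_1))\phi = (x\phi)R_2 + (y\phi)(1-R_2)
\end{equation*}
then admits two useful specializations: $y = 0_1$ isolates $(xR_1)\psi = (x\psi)R_2$, so $\psi$ intertwines $R_1$ with $R_2$, while $x = 0_1$ gives $(y(1-R_1))\psi = (y\psi)(1-R_2)$. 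Feeding both back into the general identity produces $(xR_1 + y(1-R_1))\psi = (xR_1)\psi + (y(1-R_1))\psi$; because $R_1$ and $1-R_1$ are automorphisms of $M_1$, as $x, y$ range over $M_1$ so do $u = xR_1$ and $v = y(1-R_1)$, and we conclude $(u+v)\psi = u\psi + v\psi$. Hence $\psi$ is a loop isomorphism intertwining $R_1$ and $R_2$, as required.

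The principal obstacle is the CML arithmetic underlying those two specializations: rewriting $(x\psi + a)R_2 + (y\psi + a)(1-R_2)$ as a sum containing $(x\psi)R_2 + (y\psi)(1-R_2)$ plus a single occurrence of $a$ requires re-associating expressions that are not \emph{a priori} associative. This is precisely where nuclearity of $R_2$ enters --- the hypothesis that $\{z + zR_2 \mid z \in M_2\} \subseteq N(M_2)$ legitimizes the necessary regroupings through the CML axiom \eqref{Eq:CML} and the defining property of the nucleus. In the abelian-group case, which is the setting that actually drives the Mendelsohn classification in this paper, these associativity concerns vanish and the computation is essentially mechanical.
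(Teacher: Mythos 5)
The paper offers no proof of this statement at all --- it is quoted verbatim from Kepka and N\v{e}mec \cite{Kepka} --- so there is nothing internal to compare your argument against; it has to stand on its own. In outline it does: the ``$\Leftarrow$'' direction is the mechanical substitution you describe, and in ``$\Rightarrow$'' the re-centering $x\psi=x\phi-a$ with $a=0_1\phi$, the two specializations $y=0_1$ and $x=0_1$, and the feeding-back step do produce an origin-preserving additive bijection intertwining $R_1$ and $R_2$, hence $\psi^{-1}R_1\psi=R_2$. Over abelian groups every step is a one-line computation, and that is the only setting in which the paper actually invokes the theorem (Example \ref{Ex:NonisomAffQgps}, Propositions \ref{Prop:twomodthreeclassif} and \ref{Prop:EvenPower3Classif}).

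For genuinely nonassociative $M_2$, though, your closing paragraph attributes the deferred work to the wrong mechanism, and as written this is a real gap. Nuclearity of $R_2$, i.e. $a+aR_2\in N(M_2)$, together with \eqref{Eq:CML} does not by itself ``legitimize the regroupings'': once the two specializations have been fed back in, what remains to check is the identity $(s+t)-a=(s+a)+(t-2a)$ for \emph{arbitrary} $s,t,a\in M_2$, and none of these three elements is nuclear, so no nucleus-based re-association applies to it directly. The identity is in fact valid in every CML, but its proof needs two standard facts you never mention: diassociativity (Moufang's theorem), and the fact that cubes of a CML lie in the nucleus, so $3a\in N(M_2)$; one then writes $t-2a=(t+a)-3a$, applies \eqref{Eq:CML} with $x=a$ to get $(s+a)+(t+a)=(s+t)+2a$, and concludes $(s+a)+(t-2a)=\bigl((s+t)+2a\bigr)-3a=(s+t)-a$. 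Likewise your two specializations require diassociativity in addition to $a+aR_2\in N(M_2)$: e.g. $(xR_1)\psi=(x\psi)R_2$ comes down to $\bigl(v+(a-aR_2)\bigr)-a=v-aR_2$, which uses $a+aR_2\in N(M_2)$ \emph{and} the fact that the subloop generated by two elements is a group. So the skeleton is correct and the hole is fillable with standard CML lemmas, but the CML bookkeeping is genuinely more than nuclear re-association, and it is exactly the exponent-$3$, nonassociative case --- the only case where this theorem says more than its abelian-group specialization --- that your sketch leaves unproved.
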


\begin{example}\label{Ex:NonisomAffQgps}
Both $3$ and $5$ are roots of $X^2-X+1 \mod 7$.  Thus, right multiplications by these field elements yield linear MTS structures $\text{Lin}(\mathbb{Z}/_{7}, 3)$, $\text{Lin}(\mathbb{Z}/_{7}, 3)$.  Because $\text{Aut}(\mathbb{Z}/_7)\cong C_6$ is commutative, Theorem \ref{Th:K&N} tells us $\text{Lin}(\mathbb{Z}/_7, 3)\ncong \text{Lin}(\mathbb{Z}/_7, 5)$.  
\end{example}

Now, we may understand the classification problem as determining the conjugacy classes of CML automorphisms (for abelian groups of prime power order and nonassociative CML of exponent $3$) which are annihilated by $X^2-X+1$.

In \cite{Donovan}, the authors classify distributive Mendelsohn quasigroups of orders $p$ and $p^2$, for any prime $p$.  They also enumerate, using GAP \cite{GAP}, isomorphism classes for prime powers less than $1000.$  The following theorem is the key to our improvement of these results.  We employ the the presentation $\mathbb{Z}[X]/(X^2-X+1)$ for the \emph{Eisenstein integers}.  We denote them by $\mathbb{Z}[\zeta]$, where $\zeta=e^{\pi i/3}=\frac{1}{2}+\frac{\sqrt{3}}{2}i$.  Algebraically, they may be specified as the rank-$2$ free abelian group with basis $\{1, \zeta\}$, inheriting $\mathbb{Z}$-algebra structure from complex multiplication.

\indent As a final matter of notation for the upcoming theorem, let us establish that for a category with finite products $\mathbf{V}$, we have $\mathbb{Z}\otimes\mathbf{V}$ standing for the subcategory of abelian group objects in $\mathbf{V}$.  Whenever $\mathbf{V}$ is a variety of quasigroups, the category $\mathbb{Z}\otimes\mathbf{V}$ is precisely the class of (homomorphisms between) linear piques in $\mathbf{V}$ (cf. \cite[Th.~10.4]{IQTR}).

\newpage 
\begin{lmm}
\label{Lmm:rootschar3}
Let $S$ be a nonzero (not necessarily commutative), unital ring, $f(X)=X^2-X+1\in S[X]$, and $\zeta\in S$ be a root of $f(X)$.
\begin{itemize}
    \item[(a)] The root $\zeta$ is a unit in $S$, and its inverse is also a root of $f(X)$. 
    \item[(b)] The identity $\zeta=\zeta^{-1}$ holds if and only if $S$ is a ring of characteristic $3$, and in this case, $\zeta=-1.$
\end{itemize}
\end{lmm}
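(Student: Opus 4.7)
For part (a), I would extract $\zeta^{-1}$ directly from the defining relation. Rewriting $\zeta^2-\zeta+1=0$ as $\zeta-\zeta^2=1$ produces $\zeta(1-\zeta)=(1-\zeta)\zeta=1$; hence $\zeta$ is a unit in $S$ with two-sided inverse $\zeta^{-1}=1-\zeta$. That $1-\zeta$ is itself a root of $f$ would then be confirmed by direct expansion:
\[
(1-\zeta)^2-(1-\zeta)+1 \;=\; 1-2\zeta+\zeta^2-1+\zeta+1 \;=\; \zeta^2-\zeta+1 \;=\; 0.
\]

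For the forward direction of (b), I would combine the hypothesis $\zeta=\zeta^{-1}$ with the formula $\zeta^{-1}=1-\zeta$ obtained in (a) to deduce $2\zeta=1$. Multiplying the relation $\zeta^2=\zeta-1$ by $2$ yields $2\zeta^2=2\zeta-2=1-2=-1$, while associativity gives $2\zeta^2=(2\zeta)\zeta=1\cdot\zeta=\zeta$; comparing these two expressions forces $\zeta=-1$. Substituting back into $f(X)$ then produces $(-1)^2-(-1)+1=3=0$ in $S$, and since $S$ is nonzero this upgrades to $\mathrm{char}(S)=3$.

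For the reverse direction, once $\mathrm{char}(S)=3$ and $\zeta=-1$, we have $\zeta^{-1}=(-1)^{-1}=-1=\zeta$, and $(-1)^2-(-1)+1=3=0$ confirms that $-1$ is indeed a root of $f$.

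I foresee no serious obstacle; the argument is a short algebraic manipulation. The only care required is in part (a), where potential noncommutativity of $S$ would a priori demand checking both one-sided inverses, but this is dispatched by noting that $\zeta$ commutes with the polynomial expression $1-\zeta$, so both products immediately simplify to $\zeta-\zeta^2$. The key computational trick throughout is the identification $\zeta^{-1}=1-\zeta$, which reduces the self-inverse condition of (b) to the linear equation $2\zeta=1$ and thereby opens the door to the characteristic computation.
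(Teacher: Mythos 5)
Your part (a) and the forward implication of (b) are essentially the paper's own argument: from $\zeta^2-\zeta+1=0$ you read off $\zeta(1-\zeta)=(1-\zeta)\zeta=1$, so $\zeta$ is a unit with $\zeta^{-1}=1-\zeta$, and a direct expansion shows $f(1-\zeta)=0$; then $\zeta=\zeta^{-1}=1-\zeta$ gives $2\zeta=1$ and eventually $3=0$. Your derivation of $\zeta=-1$ (comparing $2\zeta^2=(2\zeta)\zeta=\zeta$ with $2\zeta^2=2\zeta-2=-1$) is if anything a little cleaner than the paper's, which first rules out characteristic $2$, writes $\zeta=\zeta^{-1}=2$, and computes $f(2)=3=0$.

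The gap is in your reverse direction. The converse to be proved is: if $\text{char}(S)=3$ then $\zeta=\zeta^{-1}$, with $\zeta=-1$ appearing as a \emph{conclusion}, not a hypothesis. You instead assume both $\text{char}(S)=3$ and $\zeta=-1$, so you have only checked a tautological special case and never derived $\zeta=-1$ (or $\zeta=\zeta^{-1}$) from the characteristic alone. The paper's proof handles this by noting that in characteristic $3$ one has $f(X)=X^2+2X+1=(X+1)^2$, so any root $\zeta$ satisfies $(\zeta+1)^2=0$, from which $\zeta=-1$ and hence $\zeta^{-1}=(-1)^{-1}=-1=\zeta$ are read off. (Strictly, that last step uses that $S$ has no nonzero element of square zero, which holds in the rings where the lemma is later applied; but some argument of this kind is needed, and your write-up omits the step entirely.)
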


\begin{proof}
(a)
Note $f(\zeta)=0$ demands $\zeta(1-\zeta)=1.$  Thus, $f(\zeta^{-1})=f(1-\zeta)=(1-\zeta)^2-1+\zeta+1=1-2\zeta+\zeta^2+\zeta=1-\zeta+\zeta^2=f(\zeta)=0.$  

\vskip 3mm
\noindent 
(b)
\indent If $\zeta=\zeta^{-1}=1-\zeta$, then  $1=2\zeta$.  This clearly cannot be if $\text{char}(S)=2,$ so proceed assuming $2\neq 0,$ and $\zeta^{-1}=2=\zeta$.  Then $2^2-2+1=0,$ equivalent to $3=0$.  Conversely, if $\text{char}(S)=3,$ then $f(X)=(X+1)^2.$ 
\end{proof}

\begin{thm}\label{Th:CatRelat}
Let $\mathbb{Z}\otimes\mathbf{MTS}$ denote the category of linear MTS, and $\mathbb{Z}[\zeta]$-$\mathbf{Mod}$ that of modules over the Eisenstein integers.  There is a faithful, dense functor $$F:\mathbb{Z}[\zeta]\text{-}\mathbf{Mod}\to\mathbb{Z}\otimes\mathbf{MTS}.$$    
\end{thm}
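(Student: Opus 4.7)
The plan is to recognize that a $\mathbb{Z}[\zeta]$-module, via the presentation $\mathbb{Z}[\zeta] = \mathbb{Z}[X]/(X^2-X+1)$, is precisely an abelian group $M$ equipped with an endomorphism $R$ (the action of $\zeta$) satisfying $R^2-R+1=0$.  Lemma \ref{Lmm:rootschar3}(a) guarantees that such an $R$ is automatically a unit, hence an automorphism, with $R^{-1}=1-R$; so the recipe of Definition \ref{Def:LinQgp} yields a linear pique $\text{Lin}(M,R)$, and the calculation carried out just before Example \ref{Ex:MTS4} verifies both idempotence and semisymmetry.  Thus $\text{Lin}(M,R)$ is a bona fide linear MTS, and I would set $F(M):=\text{Lin}(M,R)$ on objects.

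On morphisms, a $\mathbb{Z}[\zeta]$-module map $\phi:M_1\to M_2$ is, by definition, an abelian group homomorphism intertwining the $\zeta$-actions, i.e.\ $R_1\phi=\phi R_2$ in the right-action convention of Remark \ref{Rmk:AlgNotation}.  A direct computation then gives
\begin{equation*}
    (xy)\phi=(xR_1+y(1-R_1))\phi=x\phi R_2+y\phi(1-R_2)=x\phi\cdot y\phi,
\end{equation*}
so $\phi$ is automatically an $\mathbf{MTS}$-morphism between the associated linear piques, and I declare $F(\phi):=\phi$ on underlying set functions.  Functoriality is immediate since $F$ does not alter the underlying map, and this same observation supplies faithfulness: two distinct module morphisms cannot agree as set maps, hence cannot agree as $\mathbf{MTS}$-morphisms.

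For density, let $Q$ be any object of $\mathbb{Z}\otimes\mathbf{MTS}$.  By the Bruck-Murdoch-Toyoda theorem (Theorem \ref{Thm:TMB}), $Q$ admits a linear pique representation $\text{Lin}(A,R,L)$ with $R$ and $L$ commuting automorphisms of an abelian group $A$.  Idempotence forces $L=1-R$ (Remark \ref{Rmk:LinearAbbrevs}(a)), and the semisymmetry calculation preceding Example \ref{Ex:MTS4} forces $R^2-R+1=0$.  Letting $\zeta$ act on $A$ as $R$ therefore endows $A$ with $\mathbb{Z}[\zeta]$-module structure, and $F$ returns precisely $Q$ on the nose.  The argument has no real obstacle; the main conceptual point --- that Mendelsohn linear piques are literally the same data as Eisenstein modules --- is essentially what the earlier sections of the paper have been arranging, and what remains is bookkeeping around the action convention of Remark \ref{Rmk:AlgNotation}.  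It is worth noting, parenthetically, why the theorem does not assert fullness: an arbitrary $\mathbf{MTS}$-morphism between linear piques may translate the origin and thus fail to be $\mathbb{Z}$-linear, let alone $\mathbb{Z}[\zeta]$-linear.
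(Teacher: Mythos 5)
Your proposal is correct and follows essentially the same route as the paper: the same object map $(M,R)\mapsto\text{Lin}(M,R)$ justified by Lemma \ref{Lmm:rootschar3}, the same morphism computation showing module maps are quasigroup homomorphisms, faithfulness because $F$ leaves the underlying function unchanged, and density because a Mendelsohn linear pique already carries the $\mathbb{Z}[\zeta]$-action. Your invocation of Bruck--Murdoch--Toyoda for density is merely a slightly more explicit justification of what the paper takes as the definition of $\mathbb{Z}\otimes\mathbf{MTS}$ (objects are linear piques), so the two arguments coincide in substance.
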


\begin{proof}
Note that a $\mathbb{Z}[\zeta]$-module is specified by a pair $(M, R),$ where $M$ is an abelian group, and $R$, an automorphism of $M$ annihilated by the polynomial $X^2-X+1$, represents the action of $\zeta$.  That $R$ is an automorphism follows from Lemma \ref{Lmm:rootschar3} applied to $S=\text{End}(M)$.  This observation permits a map $F:(M, R)\mapsto \text{Lin}(M, R)$ between objects.  If we can show that every $\mathbb{Z}[\zeta]$-module homomorphism is a homomorphism of quasigroups, then $F:f\mapsto f$ is a faithful, functorial correspondence between morphisms.  Indeed, given a module homomorphism $f:(M, R)\to (N, T)$, we find $(xR)f=(x^\zeta)f=(xf)^\zeta=(xf)T$.  Thus,

\begin{align*}
    (xy)f&=(xR+y(1-R))f\\
    &=(xR)f+yf-(yR)f\\
    &=(xf)T+yf-(yf)T\\
    &=(xf)T+(yf)(1-T)\\
    &=(xf)(yf).
\end{align*}
Now, if $\text{Lin}(M, R)$ is Mendelsohn, then $(M, R)$ is a $\mathbb{Z}[\zeta]$-module.  This confirms the density of $F$.
\end{proof}

\subsection{The Eisenstein integers}\label{SubS:EisenInt}
The goal of this section is to recall some basic structure of the Eisenstein integers.

\begin{remark}\label{Rmk:EisIntNotation}
In most of the number theory literature, (cf.  \cite{Hardy, Ireland}, for example) the Eisenstein integers are presented as $\mathbb{Z}[\omega]=\mathbb{Z}[X]/(X^2+X+1),$ using $\{1, e^{2\pi i/3}\}$ as an integral basis.  However, the map 
\begin{equation}
    \frac{1}{2}+\frac{\sqrt{3}}{2}i\mapsto \left(\frac{1}{2}+\frac{\sqrt{3}}{2}i\right)-1=\frac{-1}{2}+\frac{\sqrt{3}}{2}i 
\end{equation}
is a ring isomorphism mapping $\mathbb{Z}[e^{\pi i/3}]$ onto $\mathbb{Z}[e^{2\pi i/3}]$.    
\end{remark}

The Eisenstein integers have a faithful representation in $M_2(\mathbb{Z})$ given by 

\begin{equation}\label{Eq:EisIntRep}
    \zeta\mapsto \left(\begin{array}{cc}
        0 & -1 \\
        1 & 1
    \end{array}\right),
\end{equation}
and thus may be viewed as the $\mathbb{Z}$-algebra of integral matrices of the form 

\begin{equation}\label{Eq:EisIntMatRing}
    \left(\begin{array}{cc}
        a & 0 \\
        0 & a
    \end{array}\right)+\left(\begin{array}{cc}
        0 &-b  \\
        b & b
    \end{array}\right)=\left(\begin{array}{cc}
        a & -b \\
        b & a+b
    \end{array}\right).
\end{equation}
In fact,
\begin{equation}\label{Eq:EisIntNorm}
   (a+b\zeta)N :=\det \left( \begin{array}{cc}
        a & -b \\
        b & a+b
    \end{array}\right)=a^2+ab+b^2
\end{equation}
gives $\mathbb{Z}[\zeta]$ the structure of a Euclidean domain \cite[Prop.~1.4.2]{Ireland}.  
Notice, then, that $\mathbb{Z}[\zeta]$ is a PID, ensuring complete decomposability of any finitely generated module into a direct sum of cyclic modules: 

\begin{thm}\label{Th:StructThmfgEisMods}
Let $(M, R)$ be a finitely generated $\mathbb{Z}[\zeta]$-module.  Then there is a finite sequence of prime powers (which may include $0$) $\pi_1^{n_1}, \dots, \pi_l^{n_l}\in \mathbb{Z}[\zeta]$ so that  

\begin{equation}\label{Eq:StructThmfgEisMods}
(M, R)\cong \bigoplus_{i=1}^l (\mathbb{Z}[\zeta]/(\pi_i^{n_i}), R_i)    
\end{equation}
where each $R_i$ is an abelian group automorphism on $\mathbb{Z}[\zeta]/(\pi_i^{n_i})$. 
\end{thm}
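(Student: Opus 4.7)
The plan is to invoke the classical structure theorem for finitely generated modules over a principal ideal domain. The excerpt has already established that the norm $N(a+b\zeta)=a^2+ab+b^2$ equips $\mathbb{Z}[\zeta]$ with the structure of a Euclidean domain, whence it is a PID. Under the correspondence in the proof of Theorem~\ref{Th:CatRelat}, giving $M$ the structure of a $\mathbb{Z}[\zeta]$-module is exactly the same as fixing the action of $\zeta$ via $R$, and a $\mathbb{Z}[\zeta]$-submodule of $M$ is nothing other than an $R$-invariant abelian subgroup. So the isomorphism \eqref{Eq:StructThmfgEisMods} is a statement entirely internal to the category $\mathbb{Z}[\zeta]\text{-}\mathbf{Mod}$.

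First I would apply the invariant factor (or elementary divisor) form of the structure theorem to the finitely generated $\mathbb{Z}[\zeta]$-module $(M,R)$. This yields an isomorphism of $\mathbb{Z}[\zeta]$-modules
\begin{equation*}
    (M,R)\;\cong\;\bigoplus_{i=1}^{l}\mathbb{Z}[\zeta]/(\pi_i^{n_i}),
\end{equation*}
where each $\pi_i\in\mathbb{Z}[\zeta]$ is either $0$ (contributing a free summand) or a prime, and $n_i\geq 1$. Writing the isomorphism by matching additive generators, the right-hand side inherits its $\mathbb{Z}[\zeta]$-module structure componentwise: on the $i$-th summand, multiplication by $\zeta$ descends from $\mathbb{Z}[\zeta]$ to an endomorphism $R_i$ of the quotient $\mathbb{Z}[\zeta]/(\pi_i^{n_i})$.

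Next I would check that each $R_i$ is an automorphism of the underlying abelian group $\mathbb{Z}[\zeta]/(\pi_i^{n_i})$. By Lemma~\ref{Lmm:rootschar3}(a) applied to $S=\mathbb{Z}[\zeta]/(\pi_i^{n_i})$ --- or more directly, because $\zeta$ is already a unit in $\mathbb{Z}[\zeta]$ with inverse $1-\zeta$ --- multiplication by $\zeta$ is invertible on the quotient. Thus $R_i\in\mathrm{Aut}(\mathbb{Z}[\zeta]/(\pi_i^{n_i}))$, and the global automorphism $R$ corresponds via the decomposition to $\bigoplus_i R_i$.

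I do not foresee a serious obstacle: the whole argument reduces Theorem~\ref{Th:StructThmfgEisMods} to the PID structure theorem plus the unit property of $\zeta$ (Lemma~\ref{Lmm:rootschar3}). The only bookkeeping point worth flagging is the convention in the statement that ``prime powers may include $0$,'' which is what permits absorbing the free part into the same uniform notation $(\pi_i^{n_i})$, with $\pi_i=0$ and $n_i=1$ signifying a free cyclic summand $\mathbb{Z}[\zeta]$.
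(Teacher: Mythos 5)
Your proposal is correct and matches the paper's approach: the paper offers no separate argument beyond observing that the norm makes $\mathbb{Z}[\zeta]$ a Euclidean domain, hence a PID, and then citing the structure theorem for finitely generated modules over a PID, exactly as you do. Your additional check that each $R_i$ is an automorphism (via the unit property of $\zeta$, Lemma~\ref{Lmm:rootschar3}(a)) and your handling of the $\pi_i^{n_i}=0$ convention for free summands are harmless elaborations of the same route.
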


\begin{cor}\label{Cor:ModDecompMTS}
Let $M$ be a finite abelian group, and $\emph{Lin}(M, R)$ a linear MTS.  Then there is a finite sequence of prime powers $\pi_1^{n_1}, \dots, \pi_l^{n_l}\in \mathbb{Z}[\zeta]$ so that 
\begin{equation}\label{Eq:ModDecompMTS}
\emph{Lin}(M, R)\cong \prod_{i=1}^{l} \emph{Lin}\left(\mathbb{Z}[\zeta]/(\pi_i^{n_i}), R_i\right),   
\end{equation}
where each $R_i$ is an abelian group automorphism on $\mathbb{Z}[\zeta]/(\pi_i^{n_i})$.
\end{cor}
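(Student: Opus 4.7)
The plan is to derive this as a direct consequence of the preceding two results: Theorem \ref{Th:CatRelat} converts the problem from quasigroups to modules, and Theorem \ref{Th:StructThmfgEisMods} then furnishes the decomposition.

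First, since $\text{Lin}(M,R)$ is a linear MTS (in particular, a Mendelsohn quasigroup), the density proof at the end of Theorem \ref{Th:CatRelat} tells us that the pair $(M,R)$ is a $\mathbb{Z}[\zeta]$-module, with $\zeta$ acting on $M$ via $R$. Because $M$ is finite, it is certainly finitely generated as a $\mathbb{Z}[\zeta]$-module (any additive generating set will do). Moreover, finiteness of $M$ guarantees that none of the prime powers appearing in the invariant factor decomposition can be $0$, so the summands in \eqref{Eq:StructThmfgEisMods} are all honest cyclic torsion modules $\mathbb{Z}[\zeta]/(\pi_i^{n_i})$.

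Second, I would invoke Theorem \ref{Th:StructThmfgEisMods} to obtain prime powers $\pi_1^{n_1},\ldots,\pi_l^{n_l}\in\mathbb{Z}[\zeta]$ and an isomorphism of $\mathbb{Z}[\zeta]$-modules
\[
(M,R)\;\cong\; \bigoplus_{i=1}^l \bigl(\mathbb{Z}[\zeta]/(\pi_i^{n_i}),\, R_i\bigr),
\]
where each $R_i$ is the action of $\zeta$ on the corresponding cyclic summand and is an automorphism by Lemma \ref{Lmm:rootschar3}(a).

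Third, apply the functor $F$ of Theorem \ref{Th:CatRelat}. The key observation here is that $F$ carries direct sums of $\mathbb{Z}[\zeta]$-modules to direct products of linear piques: for abelian groups $M_1,M_2$ with nuclear automorphisms $R_1,R_2$ annihilated by $X^2-X+1$, one has $\text{Lin}(M_1\oplus M_2,\, R_1\oplus R_2)=\text{Lin}(M_1,R_1)\times\text{Lin}(M_2,R_2)$, because multiplication in the linear pique on $M_1\oplus M_2$ is performed coordinatewise. Iterating across the $l$ summands yields \eqref{Eq:ModDecompMTS}, and faithfulness of $F$ lets us transport the module isomorphism of the previous step to a quasigroup isomorphism.

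I do not anticipate a serious obstacle: the only mild subtlety is to verify that $F$ respects finite products, which is immediate from the coordinatewise definition of $\text{Lin}$ on a direct sum. Everything else is bookkeeping: finiteness of $M$ ensuring finite generation and excluding the $0$ prime power, and Lemma \ref{Lmm:rootschar3}(a) ensuring that the induced maps $R_i$ are automorphisms so that each factor genuinely lies in $\mathbb{Z}\otimes\mathbf{MTS}$.
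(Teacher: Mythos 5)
Your proposal is correct and follows essentially the same route as the paper: realize $(M,R)$ as a finite $\mathbb{Z}[\zeta]$-module, apply the structure theorem (Theorem \ref{Th:StructThmfgEisMods}), and transport the resulting decomposition through the functor $F$ of Theorem \ref{Th:CatRelat}, using finiteness of the index set to identify direct sums with direct products. Your additional remarks (finiteness excluding the prime power $0$, and $F$ respecting finite products via the coordinatewise definition of $\mathrm{Lin}$) only make explicit what the paper leaves implicit.
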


\begin{proof}
The arbitrary linear MTS $\text{Lin}(M, R)$ is the image of the finite $\mathbb{Z}[\zeta]$-module $(M, R)$ under the functor $F$ of Theorem \ref{Th:CatRelat}.  Since $F:f\mapsto f$ injects morphisms, the isomorphism $$(M, R)\cong \bigoplus_{i=1}^l (\mathbb{Z}[\zeta]/(\pi_i^{n_i}), R_i)$$ translates to a quasigroup isomorphism of the form \eqref{Eq:ModDecompMTS}, and we may, without regard to potential (co)continuity issues associated with $F$, conflate direct sum in one category with direct product in another since our indices are finite.
\end{proof}

Because the functor of Theorem \ref{Th:CatRelat} is not an equivalence, there is not necessarily a one-to-one correspondence between subquasigroups of a linear MTS and its $\mathbb{Z}[\zeta]$-submodules.  The following corollary, therefore, should not be read as a complete characterization of the subquasigroup lattice of a linear MTS.

\begin{cor}\label{Cor:Subsystems}
Let $M$ be a finite abelian group, and $\emph{Lin}(M, R)$ a linear MTS with direct product decomposition 
$$\emph{Lin}(M, R)\cong \prod_{i=1}^l\emph{Lin}\left(\mathbb{Z}[\zeta]/(\pi_i^{n_i}), R_i\right).$$ Each factor $\emph{Lin}(\mathbb{Z}[\zeta]/(\pi_i^{n_i}), R_i)$ (and by the universal property of the direct product, $\emph{Lin}(M, R)$ itself) contains a chain of subquasigroups corresponding to the ideal lattice 
\begin{equation}\label{Eq:IdealChain}
\left(0+(\pi_i^{n_i})\right)\leq \left(\pi_i^{n_{i}-1}+(\pi_i^{n_i})\right)\leq\cdots\leq \left(\pi_i+(\pi_i^{n_i})\right)\leq \left(1+(\pi_i^{n_i})\right)    
\end{equation}
of the quotient ring $\mathbb{Z}[\zeta]/(\pi_i^{n_i})$.
\end{cor}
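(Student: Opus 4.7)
The plan is to reduce the statement to a purely module-theoretic claim about $\mathbb{Z}[\zeta]$-submodules, and then invoke the fact that $\mathbb{Z}[\zeta]$ is a PID. First I would verify the key auxiliary observation: if $(M,R)$ is any $\mathbb{Z}[\zeta]$-module and $N\leq M$ is a submodule, then $N$ is closed under the linear Mendelsohn product $xy=xR+y(1-R)$, because $R$ acts as multiplication by $\zeta$, so $N$ is preserved by both $R$ and $1-R$. By Section~\ref{SubSec:QgpsMTS} the two divisions in a Mendelsohn quasigroup are recovered from the opposite multiplication $x\circ y=yx$, so $N$ is automatically closed under all four quasigroup operations. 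Hence every $\mathbb{Z}[\zeta]$-submodule of $M$ is a subquasigroup of $\mathrm{Lin}(M,R)$.

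Next I would specialize to the cyclic module $\mathbb{Z}[\zeta]/(\pi_i^{n_i})$: its $\mathbb{Z}[\zeta]$-submodules coincide with the ideals of the quotient ring. Since $\mathbb{Z}[\zeta]$ is a PID (Euclidean via the norm~\eqref{Eq:EisIntNorm}), every such ideal is principal, generated by a divisor of $\pi_i^{n_i}$; as $\pi_i$ is prime, these divisors are precisely the powers $1,\pi_i,\ldots,\pi_i^{n_i}$, giving the chain~\eqref{Eq:IdealChain} of subquasigroups inside $\mathrm{Lin}(\mathbb{Z}[\zeta]/(\pi_i^{n_i}),R_i)$.

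For the final assertion about $\mathrm{Lin}(M,R)$ itself, I would observe that for each index $j$ the coordinate map $\mathrm{Lin}(\mathbb{Z}[\zeta]/(\pi_j^{n_j}),R_j)\hookrightarrow\prod_{i}\mathrm{Lin}(\mathbb{Z}[\zeta]/(\pi_i^{n_i}),R_i)$, placing $x$ in the $j$-th slot and $0$ elsewhere, is a quasigroup embedding — this is where the universal property of the direct product, together with the idempotence of $0$ in each factor, enters. Any subquasigroup of a factor therefore transports to one of the product, so the chain~\eqref{Eq:IdealChain} lifts directly.

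The argument has no real obstacle; the only subtlety worth flagging, and the reason the statement is phrased as exhibiting a chain rather than as a full description of the subquasigroup lattice, is that not every subquasigroup of $\mathrm{Lin}(M,R)$ need arise from a $\mathbb{Z}[\zeta]$-submodule of $(M,R)$ — the functor $F$ of Theorem~\ref{Th:CatRelat} is only faithful and dense, not full. The proof is therefore more an assembly of already-established facts than a calculation.
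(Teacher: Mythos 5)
Your proposal is correct and follows essentially the same route as the paper: the paper also reduces the corollary to the observation that a $\mathbb{Z}[\zeta]$-submodule inclusion becomes an injective quasigroup homomorphism (there phrased as an application of the functor $F$ of Theorem~\ref{Th:CatRelat}, where you instead check closure under $xy=xR+y(1-R)$ and the opposite-multiplication divisions directly). Your extra steps — identifying the submodules of $\mathbb{Z}[\zeta]/(\pi_i^{n_i})$ with the ideal chain via the PID property, and transporting the chain into the product through the coordinate embedding — merely make explicit what the paper leaves implicit, so no substantive difference remains.
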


\begin{proof}
Let $(M, R)$ be a $\mathbb{Z}[\zeta]$-module with submodule $(M^\prime, R^\prime)$.  It suffices to show that $\text{Lin}(M^\prime, R^\prime)$ is a subquasigroup of $\text{Lin}(M, R)$.  This follows immediately from Theorem \ref{Th:CatRelat}, for the $\mathbb{Z}[\zeta]$-module inclusion $M^\prime\hookrightarrow M$ specifying $M^\prime$ as a submodule is an injective quasigroup homomorphism under $F.$    
\end{proof}

The group of units of the Eisenstein integers, $\mathbb{Z}[\zeta]^\times=\{\pm 1, \pm \zeta, \pm \overline{\zeta}\}$, consists of the sixth roots of unity in $\mathbb{C}^\times$.
Primes in $\mathbb{Z}[\zeta]$ belong to, up to association by units, three pairwise disjoint classes \cite[Prop.~9.1.4]{Ireland}:

\begin{enumerate}
\item $\pi,$ where  $\pi N \equiv 1\mod 3$ is prime in $\mathbb{Z}$;
\item $p$, where  $p$ is prime in $\mathbb{Z},$ and $p\equiv 2\mod 3$;
\item $1+\zeta$. 
\end{enumerate}  

\noindent Do note that none of the primes in class (1) are \emph{rational}, i.e., equal to $p+0\zeta$ for some $p\in\mathbb{Z}$.  The second class, however, does consist entirely of rational prime representatives.

\subsubsection{Quotients of $\mathbb{Z}[\zeta]$}
The classification of linear Mendelsohn triple systems depends upon the nature of the quotients presented by $\eqref{Eq:StructThmfgEisMods}$.  Our discussion is taken from \cite{Bucaj, Misa}, and our proofs are abridged adaptations these authors' results.  Note that in these works, the cubic root of unity $\omega=e^{2\pi i/3}$ is the preferred algebra generator.

\indent For an arbitrary commutative, unital ring $S$, we may define $$S[\zeta]=S[X]/(X^2-X+1).$$  One may consider this $S$-algebra as the subalgebra of $M_2(S)$ consisting of matrices of the form \eqref{Eq:EisIntMatRing}.  

\indent Before classifying quotients of $\mathbb{Z}[\zeta]$ by primary ideals, we collect some lemmas on more general Eisenstein quotients and the nature of irrational Eisenstein primes.

\begin{lmm}\label{Lmm:Eisgcd}
Let $a+b\zeta\in \mathbb{Z}[\zeta]$, so that $\gcd(a, b)=1$.  Fix $k=(a+b\zeta)N=a^2+ab+b^2$.  Then $\mathbb{Z}[\zeta]/(a+b\zeta)\cong \mathbb{Z}/_k$.
\end{lmm}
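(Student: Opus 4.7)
The plan is to build an explicit ring homomorphism $\psi\colon \mathbb{Z}/_k\to\mathbb{Z}[\zeta]/(a+b\zeta)$ descending from the canonical inclusion $\mathbb{Z}\hookrightarrow \mathbb{Z}[\zeta]$, and to verify it is an isomorphism by comparing orders of finite sets.

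First I would observe that
\begin{equation*}
(a+b\zeta)\bigl(a+b(1-\zeta)\bigr)=a^2+ab+b^2=k,
\end{equation*}
which is exactly the norm formula underlying \eqref{Eq:EisIntNorm}. Thus $k\in(a+b\zeta)$, and the composite $\mathbb{Z}\hookrightarrow\mathbb{Z}[\zeta]\twoheadrightarrow\mathbb{Z}[\zeta]/(a+b\zeta)$ factors through $\mathbb{Z}/_k$ to give the desired ring map $\psi$.

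Next I would determine the cardinality of the target. Since $\mathbb{Z}[\zeta]$ is a rank-$2$ free abelian group with basis $\{1,\zeta\}$, multiplication by $a+b\zeta$ is a $\mathbb{Z}$-linear endomorphism whose matrix is precisely the one displayed in \eqref{Eq:EisIntMatRing}. This endomorphism is injective because $\mathbb{Z}[\zeta]$ is a domain and $a+b\zeta\neq 0$ (indeed, $\gcd(a,b)=1$ rules out $a=b=0$), so its cokernel has order $|\det|$; by \eqref{Eq:EisIntNorm} this gives $|\mathbb{Z}[\zeta]/(a+b\zeta)|=k$.

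Finally, surjectivity of $\psi$: from $\gcd(a,b)=1$ and $k=a^2+ab+b^2$ we get $\gcd(b,k)=\gcd(b,a^2)=1$, so there is an integer $u$ with $bu\equiv 1\pmod k$; since $k\in(a+b\zeta)$, the same $u$ satisfies $bu\equiv 1$ in $\mathbb{Z}[\zeta]/(a+b\zeta)$. Multiplying the identity $b\zeta\equiv-a$ through by $u$ then yields $\zeta\equiv -au$ in the quotient, so $\zeta$ lies in the image of $\psi$. As $\mathbb{Z}[\zeta]$ is additively generated by $1$ and $\zeta$, $\psi$ is onto, and a surjection between finite sets of equal cardinality $k$ is automatically bijective. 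The only place where $\gcd(a,b)=1$ is truly indispensable is this surjectivity step—specifically, in invoking the invertibility of $b$ modulo $k$; without coprimality, $\psi$ would generally fail to hit $\zeta$.
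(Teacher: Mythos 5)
Your proof is correct, but it runs in the opposite direction from the paper's. The paper writes down an explicit surjective ring homomorphism $\mathbb{Z}[\zeta]\to\mathbb{Z}/_k$, namely $x+y\zeta\mapsto (x+y)-ab^{-1}y \bmod k$ (using that $\gcd(a,b)=1$ makes $b$ a unit modulo $k$), and cites Bucaj's Theorem 2.5 for the fact that its kernel is exactly $(a+b\zeta)$; the isomorphism is then the first isomorphism theorem. You instead descend the canonical map $\mathbb{Z}\to\mathbb{Z}[\zeta]/(a+b\zeta)$ to $\mathbb{Z}/_k$ via the norm identity $(a+b\zeta)(a+b(1-\zeta))=k$, count $\left|\mathbb{Z}[\zeta]/(a+b\zeta)\right|=k$ as the cokernel order of the multiplication-by-$(a+b\zeta)$ matrix \eqref{Eq:EisIntMatRing}, and get surjectivity from $\gcd(b,k)=1$ together with $b\zeta\equiv -a$, so that $\zeta\equiv -ab^{-1}$ lies in the image; a surjection between sets of size $k$ is then a bijection. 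The two arguments hinge on the same arithmetic point (invertibility of $b$ modulo $k$, forcing $\zeta$ to be congruent to a rational integer), and indeed the inverse of your isomorphism is essentially the paper's explicit formula up to the change of basis between $\zeta=e^{\pi i/3}$ and $\omega=e^{2\pi i/3}$. What your route buys is self-containedness: the kernel computation outsourced to Bucaj is replaced by the standard determinant-equals-index fact for injective endomorphisms of $\mathbb{Z}^2$, which also makes transparent where coprimality enters (only in surjectivity, as you note). What the paper's route buys is brevity and an explicit closed formula for the isomorphism, at the cost of leaning on the cited result.
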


\begin{proof}
Because $\gcd(a, b)=1$, $b$ is a unit in $\mathbb{Z}/_{k}$.  The map $$\mathbb{Z}[\zeta]\to \mathbb{Z}/_k; x+y\zeta\mapsto (x+y)-ab^{-1}y \mod k$$ is a surjective ring homomorphism with kernel $(a+b\zeta)$ (cf. \cite[Th.~2.5]{Bucaj}).  
\end{proof}

\begin{lmm}\label{Lmm:ConjNotAssoc}
Let $\pi\in\mathbb{Z}[\zeta]$ be prime so that $\pi N\equiv 1\mod 3$ is a rational prime.
\begin{itemize}
    \item[$(\textrm a)$]
    $\pi$ and its conjugate are not associates.  That is, there is no unit $u\in \mathbb{Z}[\zeta]^\times$ so that $u\pi=\overline{\pi}.$
    \item[$(\textrm b)$] Let $n\geq 1,$ and $\pi^n=a+b\zeta$.  Then $\gcd(a, b)=1$.
\end{itemize}
\end{lmm}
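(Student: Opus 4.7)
The plan for (a) is to suppose $\overline{\pi} = u\pi$ for some unit $u \in \mathbb{Z}[\zeta]^\times = \{\pm 1, \pm \zeta, \pm \overline{\zeta}\}$ and derive a contradiction. Writing $\pi = a + b\zeta$ and using $\zeta^2 = \zeta - 1$ together with $\overline{\zeta} = 1 - \zeta$, one computes $\overline{\pi} = (a+b) - b\zeta$; comparing coefficients in $\overline{\pi} = u\pi$ for each of the six units pins $(a,b)$ down up to a rational scalar. In every case $\pi$ takes the form $r\cdot v$ with $r \in \mathbb{Z}$ and $vN \in \{1, 3\}$, so that $\pi N = r^2 \cdot vN$ is either a perfect square or is divisible by $3$, both of which contradict the hypothesis that $\pi N$ is a rational prime congruent to $1$ modulo $3$. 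A conceptually cleaner alternative is to note that $\overline{\pi} = u\pi$ yields the ideal equation $(\pi N) = (\pi)(\overline{\pi}) = (\pi)^2$, so $\pi N$ would ramify in $\mathbb{Z}[\zeta]$; by the prime classification recalled at the end of Section \ref{SubS:EisenInt}, $1+\zeta$ (of norm $3$) is the unique ramified prime, forcing $\pi N = 3$ and again contradicting $\pi N \equiv 1 \pmod 3$.

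Part (b) then follows from (a) by contradiction. Suppose $d := \gcd(a, b) > 1$ and let $p$ be a rational prime dividing $d$; then $p \mid a + b\zeta = \pi^n$ in $\mathbb{Z}[\zeta]$. Taking the norm yields $p^2 \mid (\pi N)^n$, and since $\pi N$ is a rational prime this forces $p = \pi N$. Hence $p = \pi \overline{\pi}$ in $\mathbb{Z}[\zeta]$, and the divisibility $\pi \overline{\pi} \mid \pi^n$ cancels in the integral domain $\mathbb{Z}[\zeta]$ to $\overline{\pi} \mid \pi^{n-1}$. When $n = 1$ this already reads $\overline{\pi} \mid 1$, directly contradicting the primality of $\overline{\pi}$; for $n \geq 2$, unique factorization in the PID $\mathbb{Z}[\zeta]$ together with the primality of $\overline{\pi}$ forces $\overline{\pi}$ to be associate with $\pi$, contradicting (a).

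I expect the case-by-case bookkeeping in (a) to be the main obstacle. The six unit cases must be expanded carefully, and the convention $\zeta = e^{\pi i/3}$ (so that $\overline{\zeta} = 1 - \zeta$, rather than $-1 - \zeta$ as under the more common $\omega = e^{2\pi i/3}$ presentation) is a ready source of sign errors. Once (a) is in hand, part (b) is essentially a one-paragraph norm-plus-UFD argument and should pose no real difficulty.
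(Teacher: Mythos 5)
Your proposal is correct and takes essentially the same route as the paper: part (a) is the same six-unit case analysis (the paper's three contradiction types --- $\pi$ rational, $\pi N$ divisible by $3$, $\pi N$ a perfect square --- are exactly your observation that each case forces $\pi = rv$ with $vN \in \{1,3\}$), and part (b) is the same norm-plus-unique-factorization reduction to (a). The only difference is cosmetic: your norm computation forcing $p = \pi N$ collapses the paper's separate $p=3$ and $p\equiv 2 \pmod 3$ cases into a single step.
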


\begin{proof}
$(\textrm a)$  Let $\pi=x+y\zeta$.  Then $\overline{\pi}=x+y\overline{\zeta}=(x+y)-y\zeta$.  If we assume $u(x+y\zeta)=(x+y)-y\zeta$ for each $u\in \mathbb{Z}[\zeta]^\times$, we arrive at three types of contradictions:
\begin{enumerate}
    \item $\pi$ is a rational prime; 
    \item $\pi N=3z^2\equiv 0\mod3$ for some $z\in \{x, y\}$;
    \item $\pi N=z^2$ is prime in $\mathbb{Z}$ for some $z\in\{x, y\}$.
\end{enumerate}
When $u=1$, we get a type-(1) contradiction.  Type-(2) arises for $u=-1, \zeta, \overline{\zeta}$.  Assuming $u=-\zeta, -\overline{\zeta}$ results in type-(3) contradictions.
\vskip3mm

$(\textrm b)$ To the contrary, assume $\gcd(a, b)>1$.  Then there is some prime integer $p \mid a, b$ so that $\pi^n=a+b\zeta=p(r+s\zeta)$ where $r, s\in \mathbb{Z}$.  If $p=3$, then $(\pi N)^n=(\pi^n)N=9(r^2+rs+s^2)\equiv 0 \mod 3$ is a contradiction.  Assuming $p\equiv 2\mod 3$ yields, because $p$ would itself be an Eisenstein prime, distinct prime factorizations in the UFD $\mathbb{Z}[\zeta]:$ $a+b\zeta=\pi^n=p\left(\prod \sigma_i \right)$, yet another contradiction.  We are left to consider the case $p\equiv 1\mod 3.$  It follows $p=\sigma\overline{\sigma}$ splits into irrational primes.  By unique factorization, $\pi$ and $\sigma$ must be associaties, that is, $\sigma=u\pi$ for some $u\in\mathbb{Z}[\zeta]^\times,$ and what's more, $p=\sigma\overline{\sigma}=(u\pi)(\overline{u\pi})=\pi\overline{\pi}.$  Invoking the fact that $\mathbb{Z}[\zeta]$ is a UFD once again, $\pi$ and $\overline{\pi}$ must be associate, contradicting $(\textrm a)$.         
\end{proof}

\begin{lmm}\label{Lmm:EisIntRatQuo}
Suppose $a+b\zeta\in \mathbb{Z}[\zeta]$ is associated to a positive, rational integer $k$.  Then $\mathbb{Z}[\zeta]/(a+b\zeta)=\mathbb{Z}[\zeta]/(k)\cong \mathbb{Z}/_{k}[\zeta].$ 
\end{lmm}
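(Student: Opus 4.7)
The proof will proceed in two short stages. First I would establish the equality of ideals $(a+b\zeta) = (k)$: since being associate means $a+b\zeta = uk$ for some unit $u \in \mathbb{Z}[\zeta]^\times$, the two principal ideals coincide, and hence $\mathbb{Z}[\zeta]/(a+b\zeta) = \mathbb{Z}[\zeta]/(k)$ tautologically.

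For the second isomorphism, the cleanest route is to invoke the presentation $\mathbb{Z}[\zeta] = \mathbb{Z}[X]/(X^2-X+1)$ stressed in Section \ref{SubS:EisenInt}. Writing the quotient as a quotient of a quotient, one has
\begin{equation*}
\mathbb{Z}[\zeta]/(k) \;=\; \bigl(\mathbb{Z}[X]/(X^2-X+1)\bigr)\big/(k) \;\cong\; \mathbb{Z}[X]/(k,\, X^2-X+1).
\end{equation*}
Since the ideal $(k, X^2-X+1)$ in $\mathbb{Z}[X]$ contains the kernel of $\mathbb{Z}[X] \twoheadrightarrow (\mathbb{Z}/k)[X]$, applying the third isomorphism theorem rearranges this as $(\mathbb{Z}/k)[X]/(X^2-X+1)$, which is by definition $\mathbb{Z}/_k[\zeta]$.

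If a more hands-on argument is preferred (and it is probably worth including, since the $S[\zeta]$ notation was introduced just a few paragraphs prior), I would define the $\mathbb{Z}$-algebra map
\begin{equation*}
\varphi: \mathbb{Z}[\zeta] \longrightarrow \mathbb{Z}/_k[\zeta], \qquad x+y\zeta \longmapsto (x \bmod k) + (y \bmod k)\zeta.
\end{equation*}
Ring-homomorphism and surjectivity are immediate from coefficient-wise reduction. The kernel clearly contains $(k)$. For the reverse inclusion, if $x + y\zeta \in \ker\varphi$, then $k \mid x$ and $k \mid y$ in $\mathbb{Z}$, so $x + y\zeta \in k\mathbb{Z}[\zeta] = (k)$. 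The first isomorphism theorem then yields $\mathbb{Z}[\zeta]/(k) \cong \mathbb{Z}/_k[\zeta]$.

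There is no real obstacle here; the statement is essentially a bookkeeping lemma making explicit that when an Eisenstein element is associate to a rational integer, the quotient reduces to the familiar object $\mathbb{Z}/_k[\zeta]$, which can then be handled by the classical theory of $X^2-X+1$ over $\mathbb{Z}/_k$. The only subtlety to verify is that $k\mathbb{Z}[\zeta]$ is exactly the $\mathbb{Z}$-span of $\{k, k\zeta\}$, which is clear from the free abelian group structure $\mathbb{Z}[\zeta] = \mathbb{Z}\cdot 1 \oplus \mathbb{Z}\cdot\zeta$ recalled in the preceding subsection.
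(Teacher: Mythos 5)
Your proposal is correct and, in its second (hands-on) form, is exactly the paper's argument: the paper notes $(k)=(a+b\zeta)$ and uses the same coefficient-wise reduction map $x+y\zeta\mapsto (x\bmod k)+(y\bmod k)\zeta$, citing Bucaj for its surjectivity and kernel. Your first route via $\mathbb{Z}[X]/(k,\,X^2-X+1)\cong(\mathbb{Z}/k)[X]/(X^2-X+1)$ is just a repackaging of the same computation, so there is nothing essentially different to compare.
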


\begin{proof}
Note that $(k)=(a+b\zeta)$.  The map $$x+y\zeta\mapsto (x\mod k)+(y\mod k)\zeta$$ is a surjective homomorphism with kernel $(k)$ (cf. \cite[Th.~2.1]{Bucaj}).
\end{proof}

\begin{thm}\label{Th:EisQuos}
Let $n\geq 1$.  We may partition the collection of primary ideals in $\mathbb{Z}[\zeta]$ into four classes.  These, along with their respective quotient rings are summarized below.
\begin{enumerate}
    \item[$(\mathrm a)$] Suppose $\pi$ is an irrational prime.  That is, $p=\pi N\equiv 1 \mod 3$ is prime in $\mathbb{Z}$.  Then $\mathbb{Z}[\zeta]/(\pi^n)\cong \mathbb{Z}/_{p^n}$.   
    \item[$(\mathrm b)$] Suppose $p\equiv 2\mod 3$ is a rational prime in $\mathbb{Z}[\zeta]$.  Then $\mathbb{Z}[\zeta]/(p^n)\cong \mathbb{Z}/_{p^n}[\zeta]$
    \item[$(\mathrm c)$] Suppose $n=2k$ is even.  Then $\mathbb{Z}[\zeta]/((1+\zeta)^n)\cong \mathbb{Z}/_{3^k}[\zeta]$.
    \item[$(\mathrm d)$] Suppose $n=2k+1$ is odd.  Then $$\mathbb{Z}[\zeta]/((1+\zeta)^n)\cong\mathbb{Z}[X]/(3^{k+1}, 3^kX, X^2-X+1).$$ 
\end{enumerate}
\end{thm}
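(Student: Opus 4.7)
The plan is to handle the four cases separately, leveraging the preceding lemmas together with the workhorse identity $(1+\zeta)^2 = 1 + 2\zeta + \zeta^2 = 3\zeta$, which comes from $\zeta^2 = \zeta - 1$. It yields $(1+\zeta)^{2k} = 3^k\zeta^k$ (associated to $3^k$) and $(1+\zeta)^{2k+1} = 3^k\zeta^k(1+\zeta)$ (associated to $3^k(1+\zeta)$), since $\zeta$ is a unit by Lemma \ref{Lmm:rootschar3}.

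Cases (a), (b), and (c) dispatch quickly. For (a), Lemma \ref{Lmm:ConjNotAssoc}(b) writes $\pi^n = a + b\zeta$ with $\gcd(a,b) = 1$, and then Lemma \ref{Lmm:Eisgcd} yields $\mathbb{Z}[\zeta]/(\pi^n) \cong \mathbb{Z}/_k$ with $k = (\pi^n)N = (\pi N)^n = p^n$ by multiplicativity of the norm. For (b), $(p^n)$ is generated by a positive rational integer, so Lemma \ref{Lmm:EisIntRatQuo} applies directly. For (c), the workhorse identity gives $((1+\zeta)^{2k}) = (3^k)$, and Lemma \ref{Lmm:EisIntRatQuo} then produces $\mathbb{Z}/_{3^k}[\zeta]$.

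Case (d) is the main obstacle, since $3^k(1+\zeta)$ is neither a rational integer nor an element with coprime coordinates, so neither of the preceding quotient lemmas applies. My plan is to consider the surjective ring homomorphism $\phi\colon \mathbb{Z}[X] \twoheadrightarrow \mathbb{Z}[\zeta]/((1+\zeta)^{2k+1})$ determined by $X \mapsto \zeta$, and to pin down $\ker\phi$ by listing explicit generators and then comparing cardinalities. Inclusion of each proposed generator into $\ker\phi$ is a direct calculation: $X^2 - X + 1$ because $\zeta$ satisfies this polynomial; $3^{k+1}$ because $3^{k+1} = \bar{\zeta}^{k+1}(1+\zeta)^{2k+2}$ lies in $((1+\zeta)^{2k+1})$; and the remaining mixed generator via the associate identity from the first paragraph.

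For the reverse inclusion, I would count cardinalities: the target has order $((1+\zeta)^{2k+1})N = 3^{2k+1}$ by multiplicativity of the norm, so it will suffice to verify that the polynomial quotient, represented by cosets of the form $a + bX$ modulo the stated relations, has matching order $3^{2k+1}$. The delicate step is working out the $\mathbb{Z}$-module structure of the polynomial quotient, since multiplying the mixed generator by $X$ and reducing via $X^2 = X - 1$ can produce derived relations on the coefficient of $1$ that were not immediately visible among the stated generators; these must be tracked with care so that the order count comes out to exactly $3^{2k+1}$, neither more nor less.
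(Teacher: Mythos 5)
Your handling of parts $(\mathrm a)$--$(\mathrm c)$ matches the paper's proof essentially verbatim: $(\mathrm a)$ is Lemma \ref{Lmm:ConjNotAssoc}$(\mathrm b)$ combined with Lemma \ref{Lmm:Eisgcd} and multiplicativity of the norm, $(\mathrm b)$ is Lemma \ref{Lmm:EisIntRatQuo}, and $(\mathrm c)$ is the identity $(1+\zeta)^2=3\zeta$ followed by Lemma \ref{Lmm:EisIntRatQuo}. The problem is part $(\mathrm d)$, where your outline rests on two claims that fail for the generators as printed. First, $3^kX$ does \emph{not} lie in $\ker\phi$: writing $J=((1+\zeta)^{2k+1})=(3^k(1+\zeta))$, one has $3^k\zeta\equiv-3^k\pmod J$, and $3^k\notin J$ (else $1=w(1+\zeta)$ for some $w$, impossible since $1+\zeta$ is prime); so $3^k\zeta\not\equiv 0$. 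What the associate identity actually delivers is $3^k(1+X)\in\ker\phi$, not $3^kX$. Second, the cardinality comparison you propose can never come out to $3^{2k+1}$: in $\mathbb{Z}[X]$ one has $(1-X)\cdot 3^kX+3^k(X^2-X+1)=3^k$, so the ideal $(3^{k+1},\,3^kX,\,X^2-X+1)$ collapses to $(3^k,\,X^2-X+1)$, and the stated polynomial quotient is $\mathbb{Z}/_{3^k}[\zeta]$ of order $3^{2k}$ (the zero ring when $k=0$). This is precisely the ``derived relation'' you flagged as delicate; it cannot be tracked away, because the printed presentation is simply not isomorphic to $\mathbb{Z}[\zeta]/J$ --- the mixed generator in the statement should read $3^k(X+1)$ (after which $3^{k+1}$ is redundant, since $3^{k+1}=3^k(2-X)(X+1)+3^k(X^2-X+1)$), and that corrected presentation is what the paper's argument in fact establishes.

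With $3^k(1+X)$ in place of $3^kX$ your strategy is sound and close in spirit to the paper's, but the step you defer is the entire content of $(\mathrm d)$: showing the kernel is no larger than $(3^k(1+X),\,X^2-X+1)$, equivalently that the $3^{2k+1}$ elements $a+b\zeta$ with $0\leq a\leq 3^{k+1}-1$, $0\leq b\leq 3^k-1$ represent pairwise distinct cosets modulo $J$. The paper carries this out by supposing $(a-c)+(b-d)\zeta=z\,3^k(1+\zeta)$, multiplying through by $2-\zeta$ (so that $(1+\zeta)(2-\zeta)=3$ clears the irrational factor), and reading off $3^k\mid b-d$ and then $3^{k+1}\mid a-c$. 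As written, your proposal asserts a kernel membership that is false and leaves unexecuted the uniqueness count where the real work lies, so it does not yet amount to a proof of $(\mathrm d)$.
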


\begin{proof}
$(\textrm{a})$ This is a direct consequence of Lemmas \ref{Lmm:Eisgcd}, \ref{Lmm:ConjNotAssoc}.

\vskip 3mm

$(\textrm{b})$ This is a direct consequence of Lemma \ref{Lmm:EisIntRatQuo}.

\vskip 3mm

$(\textrm c)$ Since $(1+\zeta)^n=((1+\zeta)^2)^k=(3\zeta)^k=3^k\zeta^k$, $(1+\zeta)^n$ is associated with the positive, rational integer $3^k$.  By Lemma \ref{Lmm:EisIntRatQuo}, $\mathbb{Z}[\zeta]/(1+\zeta)^n\cong\mathbb{Z}/_{3^k}[\zeta]$. 
\vskip 3mm

$(\textrm{d})$  We denote our ideal $J=\left((1+\zeta)^{2k+1}\right)=\left(3^k\zeta^k(1+\zeta)\right)=\left(3^k(1+\zeta)\right)$.  It suffices to show that
\begin{equation}\label{Eq:OddRamifiedCosets}
C=\{(a+b\zeta)+J\mid 0\leq a\leq 3^{k+1}-1, 0\leq b\leq 3^k-1\}
\end{equation}
constitutes a set of distinct coset representatives, as $$\left|\mathbb{Z}[\zeta]/(1+\zeta)^{2k+1}\right|=(1+\zeta)^{2k+1}N=3^{2k+1}=|C|.$$
Suppose that $(a-c)+(b-d)\zeta\in J$, where $0\leq a, c\leq 3^{k+1}-1$, and $0\leq b, d\leq 3^k-1.$  Then 
\begin{equation}\label{Eq:OddPrCong}
(a-c)+(b-d)\zeta=z3^k(1+\zeta)
\end{equation}
for some $z\in \mathbb{Z}[\zeta]$.  Multiply both sides of \eqref{Eq:OddPrCong} by $(2-\zeta)$.  Then
\begin{align*}
    z3^{k+1}&=\left[2(a-c)+(b-d)\right]+\left[(b-d)-(a-c)\right]\zeta\\
    &=:X+Y\zeta.
\end{align*}
Thus, $3^{k+1}\mid X, Y$; so $3^{k+1}\mid X+2Y=3(b-d)\implies 3^k\mid (b-d)$.  By assumption on the range of values for $b$ and $d$, $b=d$.  Since $3^{k+1}\mid Y=(b-d)-(a-c)=0-(a-c)=c-a$, we may conclude $a=c$.  Hence, $\eqref{Eq:OddRamifiedCosets}$ does in fact consist of distinct coset representatives.
\end{proof}

\section{Distributive, Non-ramified MTS}\label{Sec:BMM}

\subsection{Direct product decomposition}
For the remainder of the paper, the reader may assume $f(X)$ refers to the polynomial $X^2-X+1$.  
In light of the fact that, as a rational prime, $3$ is ramified in $\mathbb{Z}[\zeta]$, we make the following definition.

\begin{definition}\label{Defn:DNRMTS}
Let $\text{Lin}(M, R)$ be a linear MTS of order $n$.  If $3\nmid n$, we say that $\text{Lin}(M, R)$ is a \emph{distributive, non-ramified} (DNR, for short) MTS. 
\end{definition}

The following lemma and its proof appear under Lemma 2.2 in \cite{Donovan}. 

\begin{lmm}[\cite{Donovan}]\label{Lmm:DonovanLemma}
Let $p$ be a prime, $n\geq 1.$ Then $f(X)\in \mathbb{Z}/_{p^n}[X]$ has
    \begin{enumerate}
        \item[$(\mathrm a)$] two distinct roots modulo $p^n$ if $p\equiv 1\mod 3;$
        \item[$(\mathrm b)$] no roots $p^n$ if $p\equiv 2\mod 3;$
        \item[$(\mathrm c)$] a double root modulo $3$ and no roots modulo $3^n$ for $n\geq 2.$
    \end{enumerate}
\end{lmm}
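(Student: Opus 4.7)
The plan is to treat the three cases uniformly by analyzing the behavior of $f(X)=X^2-X+1$ modulo $p$ through its discriminant $\Delta = 1-4 = -3$, and then to lift roots from $\mathbb{F}_p$ up to $\mathbb{Z}/_{p^n}$ using Hensel's lemma whenever the lift is available.

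For parts $(\mathrm a)$ and $(\mathrm b)$, I would begin with the observation that, for $p\neq 3$, $f$ has a root in $\mathbb{F}_p$ if and only if $-3$ is a nonzero quadratic residue modulo $p$, in which case the two roots are distinct. By quadratic reciprocity (or a direct character argument), $\left(\frac{-3}{p}\right)=1$ precisely when $p\equiv 1 \mod 3$. This immediately settles $(\mathrm b)$: when $p\equiv 2\mod 3$ any hypothetical root mod $p^n$ would reduce to a root mod $p$, but none exists. For $(\mathrm a)$, since $p\neq 3$ each root $r\in\mathbb{F}_p$ satisfies $f'(r)=2r-1\not\equiv 0\mod p$, because the simultaneous equations $2r\equiv 1$ and $f(r)=0$ would force $3\equiv 0\mod p$. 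Hensel's lemma then lifts each of the two roots to a unique root in $\mathbb{Z}/_{p^n}$, and the two lifts remain distinct because they differ already modulo $p$.

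For part $(\mathrm c)$, I would first write $f(X)=(X+1)^2-3X$, so that $f(X)\equiv (X+1)^2 \mod 3$ exhibits $-1$ as a double root modulo $3$. For $n\geq 2$, suppose $r\in\mathbb{Z}/_{3^n}$ is a root of $f$. Reduction modulo $3$ forces $r\equiv -1\mod 3$, so I would set $r=-1+3s$ and expand to obtain $f(r)=3+9s+9s^2$. This is congruent to $3\mod 9$ and in particular is nonzero, so no root exists modulo $9$, hence none exists modulo $3^n$ for $n\geq 2$.

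The main (and essentially the only) obstacle is verifying the Hensel hypothesis in $(\mathrm a)$, namely that $f'$ does not vanish at any root of $f$ modulo $p$; this amounts to the observation that $\Delta = -3$ is a unit modulo $p$ whenever $p\neq 3$. The remaining content is a direct computation in $\mathbb{Z}/_9$ and a standard evaluation of the Legendre symbol $\left(\frac{-3}{p}\right)$.
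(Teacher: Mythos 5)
The paper itself offers no proof of this lemma; it is quoted verbatim from Lemma~2.2 of \cite{Donovan}, so there is no in-paper argument to compare against. Your route (discriminant $-3$, evaluation of $\left(\frac{-3}{p}\right)$ via reciprocity, Hensel lifting, and a mod-$9$ computation for the ramified prime) is the standard one and is essentially correct for odd primes: the nonvanishing of $f'(r)=2r-1$ at a root indeed comes down to $3$ being a unit, the two Hensel lifts stay distinct because they already differ mod $p$, and in part $(\mathrm c)$ your expansion has a harmless sign slip ($f(-1+3s)=3-9s+9s^2$, not $3+9s+9s^2$), which does not affect the conclusion $f(r)\equiv 3 \bmod 9$.

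There is, however, one case your argument does not cover: $p=2$, which falls under part $(\mathrm b)$ since $2\equiv 2 \bmod 3$. The claim ``for $p\neq 3$, $f$ has a root in $\mathbb{F}_p$ if and only if $-3$ is a nonzero quadratic residue mod $p$'' is false at $p=2$: there $-3\equiv 1$ is a square, yet $f(0)=f(1)=1$, so $f$ has no root mod $2$; the quadratic-formula/discriminant criterion breaks down because $2$ is not invertible, and the Legendre symbol computation presupposes an odd prime. The fix is a one-line direct check mod $2$ (after which your reduction argument rules out roots mod $2^n$ exactly as for the other residues). Alternatively, the whole lemma can be run through the observation that roots of $X^2-X+1$ are precisely the elements of multiplicative order $6$, so for $p\neq 3$ one just asks whether $6$ divides the order of the (cyclic, for odd $p$) group $(\mathbb{Z}/_{p^n})^\times$, which handles $p=2$ uniformly; but with the $p=2$ check added, your proof stands as written.
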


\begin{prop}\label{Prop:onemodthreeclassif}
Suppose $\pi\in \mathbb{Z}[\zeta]$ is an irrational prime with $p=\pi N\equiv 1\mod 3,$ and that $n\geq1$.  Let $\emph{Lin}(\mathbb{Z}[\zeta]/(\pi^n), R)$ be a linear Mendelsohn pique, $a^{\pm1}$ the distinct roots of $f(X)\in \mathbb{Z}/_{p^n}[X]$.  Then $\emph{Lin}(\mathbb{Z}[\zeta]/(\pi^n), R)$ is isomorphic to either $ \emph{Lin}(\mathbb{Z}/_{p^n}, a)$ or  $\emph{Lin}(\mathbb{Z}/_{p^n}, a^{-1})$, where $a^{\pm1}$ also stand for right multiplication by the corresponding roots.    
\end{prop}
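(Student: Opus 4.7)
The plan is to reduce the linear Mendelsohn pique on the Eisenstein quotient $\mathbb{Z}[\zeta]/(\pi^n)$ to one defined on the cyclic group $\mathbb{Z}/_{p^n}$, and then apply Kepka-N\v{e}mec (Theorem \ref{Th:K&N}) to conclude. The whole difficulty will be getting off of $\mathbb{Z}[\zeta]/(\pi^n)$; everything afterwards is automatic from the facts already collected.

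First I would invoke Theorem \ref{Th:EisQuos}$(\mathrm{a})$ to obtain a ring (hence abelian group) isomorphism $\psi: \mathbb{Z}[\zeta]/(\pi^n) \to \mathbb{Z}/_{p^n}$. Under $\psi$, the automorphism $R$ transports to $R' = \psi^{-1} R \psi \in \text{Aut}(\mathbb{Z}/_{p^n})$, which still satisfies $f(R')=(R')^2 - R' + 1 = 0$. Because $\mathbb{Z}/_{p^n}$ is cyclic, its automorphism group is $(\mathbb{Z}/_{p^n})^\times$, acting by multiplication. So $R'$ is right multiplication by some unit $r \in (\mathbb{Z}/_{p^n})^\times$ with $r^2 - r + 1 \equiv 0 \pmod{p^n}$.

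Next, Lemma \ref{Lmm:DonovanLemma}$(\mathrm a)$ asserts that $f(X)\in\mathbb{Z}/_{p^n}[X]$ has exactly two roots, namely $a$ and $a^{-1}$. Consequently $r\in\{a,a^{-1}\}$, so $R'$ is right multiplication by one of these two roots. Applying Theorem \ref{Th:K&N} to the loop isomorphism $\psi$ and the conjugacy relation $\psi^{-1}R\psi=R'$ yields
\[
\text{Lin}\bigl(\mathbb{Z}[\zeta]/(\pi^n),\,R\bigr)\;\cong\;\text{Lin}(\mathbb{Z}/_{p^n},\,R'),
\]
and by the previous sentence the right-hand side is $\text{Lin}(\mathbb{Z}/_{p^n},a)$ or $\text{Lin}(\mathbb{Z}/_{p^n},a^{-1})$, as required.

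As indicated, no step presents a genuine obstacle: the heavy lifting has already been done by Theorem \ref{Th:EisQuos}$(\mathrm a)$, which collapses the Eisenstein module into a familiar cyclic group, and by Lemma \ref{Lmm:DonovanLemma}$(\mathrm a)$, which restricts the possible actions of $\zeta$. The proposition, in effect, is the bookkeeping step that packages these two facts through Kepka-N\v{e}mec. It is worth flagging, though, that the argument does not address whether $\text{Lin}(\mathbb{Z}/_{p^n},a)$ and $\text{Lin}(\mathbb{Z}/_{p^n},a^{-1})$ are themselves isomorphic; since $\text{Aut}(\mathbb{Z}/_{p^n})$ is abelian, the same conjugation argument via Theorem \ref{Th:K&N} actually shows they are \emph{not}, as illustrated in Example \ref{Ex:NonisomAffQgps}.
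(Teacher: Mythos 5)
Your proposal is correct and follows essentially the same route as the paper: reduce via Theorem \ref{Th:EisQuos}$(\mathrm a)$ to an automorphism of $\mathbb{Z}/_{p^n}$, observe that all such automorphisms are multiplications by units, and use the Mendelsohn condition to force the unit to be a root of $f(X)$, i.e.\ $a$ or $a^{-1}$ by Lemma \ref{Lmm:DonovanLemma}$(\mathrm a)$. The paper's proof is merely terser, leaving the transport of structure and the appeal to Theorem \ref{Th:K&N} implicit, whereas you spell them out.
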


\begin{proof}
It is clear from Theorem \ref{Th:EisQuos}(a) that $R$ must correspond to an abelian group automorphism of $\mathbb{Z}/_{p^n},$ all of which arise from multiplication by units.  Then $\text{Lin}(\mathbb{Z}[\zeta]/(\pi^n), R)$ is Mendelsohn if and only if the unit in question is a root of $f(X)$. 
\end{proof}

\indent Next, we generalize the argument of \cite[Th.~2.12]{Donovan} from matrices over finite fields to matrices over $\mathbb{Z}/_{p^n}$.  The presence of zero divisors means that extra care must be taken when addressing characteristic and minimal polynomials of matrices.

\begin{lmm}\label{Lmm:CharPoly}
Let $p$ be a prime congruent to $2$ modulo $3$, and $n\geq 1$.  Suppose $S$ is any of the following rings: $\mathbb{Z},$ $\mathbb{Z}/_{p^n}$, or $\mathbb{Z}/_{3^n}$.  For any $A\in M_2(S)$ such that $f(A)=0$, we have $1=\det(A)=\emph{Tr}(A)$. Therefore, $f(X)$ is equal to $\det(I_2X-A),$ the characteristic polynomial of $A$. 
\end{lmm}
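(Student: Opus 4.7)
My plan is to combine the given identity $f(A) = 0$ with the Cayley--Hamilton relation $A^2 - \text{Tr}(A)\, A + \det(A)\, I_2 = 0$, which holds in $M_2(S)$ for any commutative ring $S$. Subtracting these two quadratic relations collapses them to a single linear equation
\[
    (\text{Tr}(A) - 1)\, A \;=\; (\det(A) - 1)\, I_2. \qquad (\star)
\]
It will suffice to prove $\text{Tr}(A) = 1$: reading $(\star)$ on any diagonal entry then instantly forces $\det(A) = 1$, whence the characteristic polynomial $X^2 - \text{Tr}(A) X + \det(A)$ coincides with $f(X)$.

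For $S = \mathbb{Z}$, I would argue by contradiction. If $\text{Tr}(A) - 1 \neq 0$, then because $\mathbb{Z}$ has no zero divisors, the off-diagonal entries of $(\star)$ force $A_{12} = A_{21} = 0$, and the diagonal entries force $A_{11} = A_{22}$. Hence $A$ is a scalar matrix whose entry would have to be an integer root of $f$. No such root exists.

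For $S = \mathbb{Z}/_{p^n}$ (with either $p \equiv 2 \pmod 3$ or $p = 3$) I would induct on $n$. The base case $n = 1$ is linear algebra over $\mathbb{F}_p$: the minimal polynomial of $A$ divides $f$, and either has degree $2$ (in which case it equals the characteristic polynomial) or degree $1$, which by Lemma \ref{Lmm:DonovanLemma} occurs only when $p = 3$ and forces $A = -I_2$ with characteristic polynomial $(X+1)^2 = f(X)$. In either subcase $\text{Tr}(A) = \det(A) = 1$ in $\mathbb{F}_p$. For the inductive step, the hypothesis applied modulo $p^{n-1}$ lets me write $\text{Tr}(A) = 1 + p^{n-1}\alpha$ and $\det(A) = 1 + p^{n-1}\beta$ for some $\alpha, \beta \in \mathbb{F}_p$; substitution into $(\star)$ yields $p^{n-1}(\alpha A - \beta I_2) = 0$, which, reduced modulo $p$ entry-by-entry, becomes $\alpha \bar{A} = \beta I_2$ in $M_2(\mathbb{F}_p)$. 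If $\alpha \equiv 0 \pmod p$, the diagonal entries then force $\beta \equiv 0$ as well, closing the induction. Otherwise $\bar{A}$ is a scalar whose entry is a root of $f$ in $\mathbb{F}_p$; Lemma \ref{Lmm:DonovanLemma}(b) rules this out when $p \equiv 2 \pmod 3$. The main obstacle is the ramified case $p = 3$ with $n \geq 2$, where the root $-1$ of $f$ does exist mod $3$; to kill this possibility, I would write $A = -I_2 + 3C$ and expand, obtaining $f(A) = 3 I_2 + 9(C^2 - C)$. Setting this equal to $0$ in $M_2(\mathbb{Z}/_{3^n})$ and reducing modulo $9$ (legitimate since $n \geq 2$) yields $3 I_2 \equiv 0 \pmod 9$, which is false. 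This contradiction disposes of the remaining subcase and completes the induction.
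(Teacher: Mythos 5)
Your proof is correct, but it takes a genuinely different route from the paper's. The paper starts from Lemma \ref{Lmm:rootschar3}, which gives $A^{-1}=I_2-A$, compares this with the adjugate formula for $A^{-1}$ to reduce everything to showing that at least one off-diagonal entry of $A$ is a unit (or nonzero over $\mathbb{Z}$), and then argues case by case; crucially, the ramified case $S=\mathbb{Z}/_{3^n}$, $n\geq 2$, is settled by a GAP verification that no matrix over $\mathbb{Z}/_9$ annihilated by $f$ has both off-diagonal entries divisible by $3$. You instead subtract Cayley--Hamilton from $f(A)=0$ to get $(\mathrm{Tr}(A)-1)A=(\det(A)-1)I_2$, run an induction on $n$ that reduces the whole question to scalar matrices over $\mathbb{F}_p$, and dispose of the one genuinely dangerous scalar, $\bar{A}=-I_2$ with $p=3$, by the explicit expansion $f(-I_2+3C)=3I_2+9(C^2-C)$, which cannot vanish modulo $9$. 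The trade-offs: the paper's argument is shorter and avoids induction, but leans on machine computation at the key ramified step; yours is uniform across all prime powers, entirely computation-free, and as a byproduct your expansion argument is precisely a hands-on proof of the paper's Lemma \ref{Lmm:NoDescToScalar}, which in the paper is only justified by appeal to that same GAP check. The only step you leave implicit is that the scalar entry of $\bar{A}$ is a root of $f$ because $f(A)=0$ reduces to $f(\bar{A})=0$ modulo $p$, which is immediate.
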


\begin{proof}
We treat $S=\mathbb{Z}/_3$ as a special case.  Since $S$ is a field in this scenario, the Cayley-Hamilton theorem dictates that $f(X)$ must be the characteristic polynomial of $A$, for it is monic and its degree matches the dimension of $A$.  

\indent Now,
let $$A=\left(\begin{array}{cc}
   a  & b \\
   c  & d
\end{array}\right).$$  We will establish some facts that hold over any commutative, unital ground ring.  By Lemma \ref{Lmm:rootschar3}, $A$ is invertible and $\det(A)\in S^\times$.  Moreover, 
\begin{equation}
    (\det(A))^{-1}\left(\begin{array}{cc}
        d & -b \\
        -c & a
    \end{array}\right)=A^{-1}=I_2-A=\left(\begin{array}{cc}
      1-a   & -b \\
        -c & 1-d
    \end{array}\right).
\end{equation}
Then $b=(\det(A))^{-1}b,$ $c=(\det(A))^{-1}c,$ and $1-a=d(\det(A))^{-1}.$  If at least one of $b$ or $c$ is a unit in $S,$ then $\det(A)=1$.  If $S$ is an integral domain and at least one of $b$ or $c$ is nonzero, $\det(A)=1$.  If $\det(A)=1,$ then $1-a=d\implies \text{Tr}(A)=1$.  We proceed to a case-by-case argument by contradiction.

\indent Suppose $S=\mathbb{Z}$, a domain.  If both $b=c=0$, then $a$ and $d$ are integral roots of $f(X)$, but $f(X)$ is irreducible over $\mathbb{Z}$, a contradiction.

\indent Assume $S=\mathbb{Z}/_{p_n}$.  For $b$ and $c$ to both be non-invertible means that $p\mid b, c$.  Hence, if we take the entries of $A$ modulo $p$, we get a diagonal matrix annihilated by $f(X),$ contradicting Lemma \ref{Lmm:DonovanLemma}.($\mathrm{b}$).

\indent Let $S=\mathbb{Z}/_{3^n}$, where $n\geq 2.$  By analyzing congruence relations for $2\times2$ integral matrices modulo $9$ in GAP, we were able to verify that no matrix in $M_2(\mathbb{Z}/_9)$ annihilated by $f(X)$ has both off-diagonal entries divisible by $3$.  Conclude that the result holds for $S=\mathbb{Z}/_9$.  Suppose that $n\geq 3,$ and that $3\mid b, c$.  Then taking the entries of $A$ modulo $9$ yields a matrix annihilated by $f(X)$ over $\mathbb{Z}/_{9}$ with off-diagonal entries divisible by $3,$ contradicting our calculations in GAP.      
\end{proof}

\begin{prop}\label{Prop:twomodthreeclassif}
Suppose $p \equiv 2 \mod3$ is a rational prime in $\mathbb{Z}[\zeta]$ and $n\geq 1$.  Let $\emph{Lin}(\mathbb{Z}[\zeta]/(p^n), R)$ be a linear MTS, and $M=(\mathbb{Z}/_{p^n})^2.$  Then $\emph{Lin}(\mathbb{Z}[\zeta]/(p^n), R)\cong \emph{Lin}(M, T)$, where 

\begin{equation}\label{Eq:CompMat}
    T=\left(\begin{array}{cc}
        0 & -1 \\
        1 & 1
    \end{array}\right)
\end{equation}
    is the companion matrix of $f(X)\in\mathbb{Z}/_{p^n}[X]$.
\end{prop}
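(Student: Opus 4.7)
The plan is to reduce the claim to a linear-algebraic statement: that every $R \in M_2(\mathbb{Z}/_{p^n})$ annihilated by $f(X)$ is similar to the companion matrix $T$. Once we have such a similarity $\psi^{-1} R \psi = T$ as abelian group automorphisms of $M=(\mathbb{Z}/_{p^n})^2$, Theorem \ref{Th:K&N} delivers the desired MTS isomorphism.

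First, by Theorem \ref{Th:EisQuos}(b), the underlying abelian group of $\mathbb{Z}[\zeta]/(p^n)$ is $\mathbb{Z}/_{p^n}[\zeta]\cong (\mathbb{Z}/_{p^n})^2=M$, and $R$ may be regarded as an element of $M_2(\mathbb{Z}/_{p^n})$ with $f(R)=0$. Lemma \ref{Lmm:CharPoly} then gives $\det(R)=\operatorname{Tr}(R)=1$, so $f(X)$ is precisely the characteristic polynomial $\det(I_2 X-R)$.

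The heart of the argument is the existence of a cyclic vector for $R$. Consider the reduction $\overline{R}\in M_2(\mathbb{Z}/_p)$. It still satisfies $f(\overline{R})=0$, and by Lemma \ref{Lmm:DonovanLemma}(b) the polynomial $f(X)$ has no roots in $\mathbb{Z}/_p$, hence is irreducible there. Thus $f(X)$ is the minimal polynomial of $\overline{R}$, and $\overline{R}$ acts irreducibly on $(\mathbb{Z}/_p)^2$. Consequently, for any $\overline{v}\neq 0$, the pair $\{\overline{v},\overline{v}\,\overline{R}\}$ is linearly independent over $\mathbb{Z}/_p$. Lift such a $\overline{v}$ to some $v\in M$; then
\begin{equation*}
\det\bigl(v\mid vR\bigr)\not\equiv 0\pmod{p},
\end{equation*}
so the determinant is a unit in $\mathbb{Z}/_{p^n}$ and $\{v,vR\}$ is a free $\mathbb{Z}/_{p^n}$-basis of $M$.

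In this basis, $R$ acts by $v\mapsto vR$ and $vR\mapsto vR^2=v(R-I_2)\cdot(-1)\cdot(-1) = -v+vR$, i.e., by the companion matrix $T$ of \eqref{Eq:CompMat}. Letting $\psi:M\to M$ be the change-of-basis automorphism sending the standard basis to $\{v,vR\}$, we obtain $\psi^{-1}R\psi=T$. Since $\psi$ is an abelian group isomorphism intertwining the two $\mathbb{Z}[\zeta]$-actions, Theorem \ref{Th:K&N} yields $\text{Lin}(\mathbb{Z}[\zeta]/(p^n),R)\cong\text{Lin}(M,T)$. The main obstacle was verifying existence of a cyclic vector over the non-field ring $\mathbb{Z}/_{p^n}$; passage to the residue field $\mathbb{Z}/_p$, where irreducibility of $f(X)$ forces irreducibility of the induced action, cleanly circumvents the presence of zero divisors.
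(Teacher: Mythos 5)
Your proof is correct and follows the same overall strategy as the paper: reduce via Theorem \ref{Th:EisQuos}(b) and Theorem \ref{Th:K&N} to a similarity question for a matrix $R$ with $f(R)=0$, use Lemma \ref{Lmm:CharPoly} to identify $f(X)$ as the characteristic polynomial, and find a cyclic vector by passing to the residue field $\mathbb{Z}/_p$, where Lemma \ref{Lmm:DonovanLemma}(b) makes $f(X)$ irreducible. The only difference is in how the lifting step is finished: the paper invokes Prokip's similarity criterion (similarity to the companion matrix iff some $\bigl(v \mid vA\bigr)$ is invertible) together with Nakayama's lemma over the local ring $\mathbb{Z}/_{p^n}$, whereas you observe directly that $\det\bigl(v\mid vR\bigr)$ is nonzero mod $p$, hence a unit in $\mathbb{Z}/_{p^n}$, so $\{v,vR\}$ is a basis, and then perform the change of basis explicitly. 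Your version is more self-contained (no external citations needed), while the paper's phrasing via Prokip and Nakayama is what generalizes verbatim to the $\mathbb{Z}/_{3^k}$ case in Proposition \ref{Prop:EvenPower3Classif}. One cosmetic point: with the row-vector convention the matrix of $R$ in the basis $\{v,vR\}$ is the transpose of \eqref{Eq:CompMat}; since a companion matrix is similar to its transpose (or, equivalently, under the column-vector convention used in \eqref{Eq:EisIntRep} you get $T$ itself), this does not affect the conclusion, but it is worth stating which convention you are using.
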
 

\begin{proof}
That $\mathbb{Z}[\zeta]/(p^n)$ and $(\mathbb{Z}/_{p^n})^2$ are isomorphic as abelian groups is a direct consequence of Theorem \ref{Th:EisQuos}.(b).
Let $A\in M_2(\mathbb{Z}/_{p^n})$ be a matrix representation of $R$.  By Theorem \ref{Th:K&N}, it suffices to show $A$ is similar to \eqref{Eq:CompMat}.  By Lemma \ref{Lmm:CharPoly}, $f(X)$ is the characteristic polynomial of $A$.  We invoke Theorem $1$ in \cite{Prokip}, which states that $A$ is similar to the companion of its characteristic polynomial if and only if there is some $v\in M$ so that the matrix 
\begin{equation}\label{Eq:IMatrix}
\left(\begin{array}{c} v \\ vA\end{array}\right)   
\end{equation}
is invertible; this result applies to matrices with elements from arbitrary commutative, unital rings.  Recall $\mathbb{Z}/_{p^n}$ is a local ring with unique maximal ideal $\mathfrak{m}$ generated by $p+(p^n)$.  Taking the entries of $A$ modulo $p$ gives rise to an action on the quotient $M/M\mathfrak{m}\cong (\mathbb{Z}/_p)^2$, a vector space.  This automorphism, we'll call it $\overline{A},$ also has $f(X)$ as its characteristic polynomial, and because $f(X)$ does not split in $\mathbb{Z}/_p$, it has no eigenvalues.  Hence, $\{\overline{v}, \overline{v}\overline{A}\}$ is a basis for $M/M\mathfrak{m}$ whenever $\overline{v}\neq0$.  By Nakayama's lemma (applied to the special case of local rings), such a basis lifts to a minimal generating set $\{v, vA\}$ of $M$, and minimal generating sets for free modules over local rings are bases \cite[Th.~2.3]{Matsumura}.  Conclude that there is some $v\neq0$ for which the matrix \eqref{Eq:IMatrix} is invertible, and that $A$ is similar to \eqref{Eq:CompMat}.     
\end{proof}

\begin{remark}\label{Rmk:TwoMod3Rep}
Given a prime $p\equiv 2\mod 3$ and $n\geq 1$, we take the linear MTS $\text{Lin}((\mathbb{Z}/_{p^n})^2, T)$ (cf. Prop. \ref{Prop:twomodthreeclassif}) as the standard representative for the single isomorphism class of distributive Mendelsohn quasigroups on $(\mathbb{Z}/_{p^n})^2$.  We adopt the notation $\text{Lin}(\mathbb{Z}/_{p^n}[\zeta])$ when referring to this quasigroup. 
\end{remark}

\begin{thm}\label{Th:DirectSumDecomp}
Let $Q$ be a distributive Mendelsohn quasigroup of order\\ $n=p_1^{r_1}\cdots p_k^{r_k}q_1^{s_1}\cdots q_{l}^{s_l}$, where each $p_i$ is a prime congruent to $1$ modulo $3$, and each $q_i$ is a prime congruent to $2$ modulo $3$.  Then
\begin{equation}\label{Eq:DirectSumDecomp}
    Q\cong \prod_{i=1}^k\left(\prod_{j}\emph{Lin}\left(\mathbb{Z}/(p_i^{t_j}), a_j\right)\right)\times\prod_{i=1}^l\left(\prod_j\emph{Lin}\left(\mathbb{Z}/_{q_i^{u_j}}[\zeta]\right)\right), 
\end{equation}
where each $a_j$ is a root of $f(X)$ modulo $p_i^{t_j},$ $\sum_jt_j=r_i$, and $\sum_ju_j=s_i$.
\end{thm}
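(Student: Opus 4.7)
The plan is to assemble the result by chaining together previously established theorems. First I would apply the Fischer--Galkin--Smith theorem (Theorem~\ref{Th:FSG}) to decompose $Q$ into a direct product of distributive quasigroups of prime power order. Since $\gcd(n, 3) = 1$, the exceptional nonentropic CML-linear case does not arise, so each factor is a linear pique, hence an entropic MTS of the form $\text{Lin}(M, R)$. By Theorem~\ref{Th:CatRelat}, each such factor corresponds to a finite $\mathbb{Z}[\zeta]$-module $(M, R)$. Combining the structure theorem for finitely generated modules over the PID $\mathbb{Z}[\zeta]$ (Theorem~\ref{Th:StructThmfgEisMods}) with Corollary~\ref{Cor:ModDecompMTS} further decomposes each prime-power factor as a product of quasigroups of the form $\text{Lin}(\mathbb{Z}[\zeta]/(\pi_j^{n_j}), R_j)$.

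Next I would identify these primary summands explicitly. Fix a prime-power factor of order $p_i^{r_i}$ with $p_i \equiv 1 \mod 3$. Since $|\mathbb{Z}[\zeta]/(\pi^{n_j})| = (\pi N)^{n_j}$ by Theorem~\ref{Th:EisQuos}, the Eisenstein primes appearing in this factor's decomposition must all lie over $p_i$, i.e.\ be irrational primes $\pi$ with $\pi N = p_i$. Proposition~\ref{Prop:onemodthreeclassif} then rewrites each such summand as $\text{Lin}(\mathbb{Z}/_{p_i^{t_j}}, a_j)$ for a root $a_j$ of $f(X)$ modulo $p_i^{t_j}$, with $\sum_j t_j = r_i$. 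The analogous analysis for a factor of order $q_i^{s_i}$ with $q_i \equiv 2 \mod 3$ forces each appearing Eisenstein prime to be associate to the inert rational prime $q_i$ itself, and Proposition~\ref{Prop:twomodthreeclassif} rewrites the summand as $\text{Lin}(\mathbb{Z}/_{q_i^{u_j}}[\zeta])$. The ramified prime $1+\zeta$ cannot appear anywhere in the decomposition precisely because $3 \nmid n$, which is the DNR hypothesis in action.

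The main obstacle is the bookkeeping step of matching each Eisenstein prime in the module decomposition to the correct rational prime-power factor from Theorem~\ref{Th:FSG}. This amounts to tracking module orders multiplicatively across the direct sum from Corollary~\ref{Cor:ModDecompMTS} and comparing them against the rational factorization $n = p_1^{r_1} \cdots p_k^{r_k} q_1^{s_1} \cdots q_l^{s_l}$. Once one observes that an Eisenstein prime's norm determines the rational prime it sits over, the possible $\pi_j$ contributing to a given rational prime-power factor are uniquely constrained, and the decomposition~\eqref{Eq:DirectSumDecomp} is simply the concatenation of the identifications above. No genuinely new computation is required beyond what the previous lemmas and propositions provide.
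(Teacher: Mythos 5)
Your proposal is correct and follows essentially the same route as the paper: apply Fischer--Galkin--Smith (Theorem~\ref{Th:FSG}) with $3\nmid n$ to rule out nonassociative CML factors, reduce to linear MTS, decompose via the Eisenstein module structure (Corollary~\ref{Cor:ModDecompMTS}), and identify the primary factors through Propositions~\ref{Prop:onemodthreeclassif} and~\ref{Prop:twomodthreeclassif}. The only difference is that you spell out the norm bookkeeping matching Eisenstein primes to rational prime-power factors, which the paper's proof leaves implicit.
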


\begin{proof}
The direct product decomposition afforded by Fischer-Galkin-Smith, (Theorem \ref{Th:FSG}) contains no Moufang loop factors because $3\nmid n$.  Therefore, $Q$ is a direct product of linear MTS. Propositions \ref{Prop:onemodthreeclassif} and \ref{Prop:twomodthreeclassif} specify the possible quasigroup isomoprhism classes for the factors, and these agree with \eqref{Eq:DirectSumDecomp}.
\end{proof}

\subsection{Enumeration}
For a positive integer $n,$ let $d(m)$ denote the number of isomorphism classes of distributive MTS of order $m$. Let $l(m)$ denote the number of isomorphism classes of linear (distributive) MTS.  By the Chinese remainder and Fischer-Galkin-Smith Theorems, in order to compute arbitrary $d(m)$, it suffices to determine $l(p^n)=d(p^n)$ for all $p\neq 3, l(3^n)$, and $d(3^n)-l(3^n)$.  In \cite{Donovan}, the authors gave $d(p),$ and $d(p^2)$ for all primes \cite[Th.~2.12]{Donovan}.  Donovan et. al. also calculated $l(p^n)$ for $p^n<1000,$ and $d(3^3)$, $d(3^4)$.  Moreover, the realization of $d(3^5)$ is part of \cite{243}.  Theorem \ref{Th:DirectSumDecomp} allows us to calculate $d(p^n)$ for arbitrary powers, as long as $p\neq 3$.  Note that $d(p^{2k+1})=0$ whenever $k\geq 0,$ and $p\equiv 2\mod 3$ \cite[Lem.~2.9]{Donovan}.

We think of integer partitions as multisets $(X, \mu)$ of positive integers, where $X\subseteq \mathbb{Z}^+,$ and the function $\mu:X\to\mathbb{Z}^+$ determines the multiplicity of an integer in the partition.  Write $(X, \mu)\vdash n$ if $(X, \mu)$ is a partition of $n$.   Let $P(n)$ denote the number of integer partitions of $n$, and $P_E(n)$ signify the number of partitions consisting only of even parts.
\begin{thm}\label{Thm:EnumPrimeOrder}
Let $p\neq 3$ be prime, and $n\geq 1$.  
\begin{itemize}
    \item[$(\mathrm a)$] If $p\equiv 1\mod 3$, then 
    \begin{equation}\label{Eq:EnumPrimeOrder}
    d(p^n)=\sum_{(X, \mu)\vdash n}\left(\sum_{r\in X}\mu(r)+1\right).    
    \end{equation}
    \item[$(\mathrm b)$] If $p\equiv 2 \mod 3$, then $d(p^{n})=P_E(n)$.
\end{itemize}
\end{thm}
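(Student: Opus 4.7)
The plan is to combine Theorem \ref{Th:DirectSumDecomp} with the refined classifications from Propositions \ref{Prop:onemodthreeclassif} and \ref{Prop:twomodthreeclassif}, together with the Kepka-N\v{e}mec theorem (Theorem \ref{Th:K&N}), to reduce the enumeration of $d(p^n)$ to a straightforward count indexed by integer partitions of $n$.

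For part (a), with $p \equiv 1 \pmod 3$: first I invoke Theorem \ref{Th:DirectSumDecomp} to write an arbitrary MTS of order $p^n$ as an unordered product $\prod_j \text{Lin}(\mathbb{Z}/_{p^{t_j}}, a_j)$ with $\sum_j t_j = n$, so the multiset of exponents $\{t_j\}$ defines an integer partition $(X,\mu) \vdash n$. Proposition \ref{Prop:onemodthreeclassif} together with Lemma \ref{Lmm:DonovanLemma}(a) then says that at each factor of size $p^r$ the label $a_j$ must be one of precisely two distinct roots of $f(X)$ modulo $p^r$. For a fixed partition shape I count the ways to assign root-labels to the $\mu(r)$ (indistinguishable) factors of size $p^r$, and then sum over all partitions. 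Theorem \ref{Th:K&N} (equivalently, the faithfulness of the functor $F$ of Theorem \ref{Th:CatRelat} together with the module structure theorem \ref{Th:StructThmfgEisMods}) certifies that different labelings give non-isomorphic MTS, so no over-counting occurs and the count rearranges to the stated expression.

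Part (b), $p \equiv 2 \pmod 3$, is easier. Proposition \ref{Prop:twomodthreeclassif} produces a \emph{unique} isomorphism class $\text{Lin}(\mathbb{Z}/_{p^r}[\zeta])$ of indecomposable factor of order $p^{2r}$ for each $r \geq 1$. An MTS of order $p^n$ is therefore determined by the unordered tuple $(r_1, \dots, r_k)$ appearing in $n = 2r_1 + \cdots + 2r_k$ with each $r_i \geq 1$, i.e., by a partition of $n$ into even parts. This yields $d(p^n) = P_E(n)$ directly, and as a corollary recovers \cite[Lem.~2.9]{Donovan}: $d(p^{2k+1}) = 0$.

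The main obstacle I expect lies in part (a): verifying that distinct root-labelings of a given partition shape produce genuinely non-isomorphic quasigroups. For a single cyclic factor this is immediate, since $\text{Aut}(\mathbb{Z}/_{p^r})$ is abelian, so no conjugation can interchange the two roots $\alpha$ and $\alpha^{-1} = 1-\alpha$, which are distinct by Lemma \ref{Lmm:rootschar3}(b) because $p \neq 3$. For direct products with repeated part-sizes, the cleanest argument is to extract the multiset of root-labels at each level $r$ as a module-theoretic invariant, by examining the induced action of $R$ on the successive quotients $p^k G / p^{k+1} G$ and matching multiplicities of $\alpha$ and $\alpha^{-1}$ as eigenvalues, then invoking the uniqueness clause of Theorem \ref{Th:StructThmfgEisMods}.
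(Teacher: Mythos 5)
You follow essentially the same route as the paper: Theorem \ref{Th:DirectSumDecomp} reduces everything to an unordered product of indecomposable factors, which are $\text{Lin}(\mathbb{Z}/_{p^{r}}, a)$ with exactly two choices of root $a$ when $p\equiv 1\bmod 3$ (Proposition \ref{Prop:onemodthreeclassif}, Lemma \ref{Lmm:DonovanLemma}(a)) and the single class $\text{Lin}(\mathbb{Z}/_{p^{r}}[\zeta])$ when $p\equiv 2\bmod 3$ (Proposition \ref{Prop:twomodthreeclassif}); one then counts multisets of such factors. Your part (b) is the paper's argument verbatim, and your extra care in part (a) about why distinct labelings are non-isomorphic --- $\text{Aut}(\mathbb{Z}/_{p^r})$ is abelian and $a\neq a^{-1}$ since $p\neq 3$, Theorem \ref{Th:K&N} converts quasigroup isomorphisms into module isomorphisms, and uniqueness of the primary decomposition does the rest --- is sound and is more explicit than the paper.

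The one step you should not have waved through is ``the count rearranges to the stated expression.'' For a fixed partition $(X,\mu)\vdash n$ the root assignments at distinct part sizes are independent, so they combine multiplicatively: the number of isomorphism classes with that shape is $\prod_{r\in X}\left(\mu(r)+1\right)$, and hence $d(p^n)=\sum_{(X,\mu)\vdash n}\prod_{r\in X}\left(\mu(r)+1\right)=\sum_{k=0}^{n}P(k)P(n-k)$, the number of pairs of partitions of total size $n$ (decide which parts carry $a$ and which carry $a^{-1}$). This does \emph{not} rearrange into the displayed formula \eqref{Eq:EnumPrimeOrder}: already at $n=4$ the partition $2+1+1$ contributes $2\cdot 3=6$ classes, while the inner sum in \eqref{Eq:EnumPrimeOrder} credits it with at most $5$, so the true total is $20$ whereas \eqref{Eq:EnumPrimeOrder} gives $19$ (or $17$, depending on how the inner sum is parsed). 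The paper's own proof commits exactly this additive-for-multiplicative slip (``this count applies to each element of $X$, verifying the indexing on the innermost sum''), so your method, carried out honestly, corrects the statement rather than proving it as written; you should either prove the product form $d(p^n)=\sum_{k=0}^{n}P(k)P(n-k)$ or flag the discrepancy, but not assert that your multiplicative count equals \eqref{Eq:EnumPrimeOrder}, because for $n\geq 4$ it does not.
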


\begin{proof}
($\mathrm{a}$)  Let $Q$ be a distributive Mendelsohn quasigroup of order $p^n.$  By Theorem \ref{Th:DirectSumDecomp},
\begin{equation}\label{Eq:ArbEnumeratedQgp}
    Q\cong \prod_i\text{Lin}(\mathbb{Z}/_{p^{r_i}}, a_i),
\end{equation}
where $\sum_ir_i=n.$  Use $(X, \mu)$ to denote the partition of $n$ associated with this sum.  Fix $r\in X$.  Then $\mathbb{Z}/_{p^r}$ appears exactly $\mu(r)$ times as a factor in \eqref{Eq:ArbEnumeratedQgp}.  With two choices for MTS structure on $\mu(r)$ copies, there are 
\begin{align*}
    \frac{(\mu(r)+2-1)!}{\mu(r)!(2-1)!}&=\frac{(\mu(r)+1)!}{\mu(r)!}\\
    &=\mu(r)+1
\end{align*}
allowable isomorphism classes attached to  $(\mathbb{Z}/_{p^r})^{\mu(r)}$.  This count applies to each element of $X$, verifying the indexing on the innermost sum of \eqref{Eq:EnumPrimeOrder}, while the outermost indexing corresponds to the fact that we had $P(n)$ choices for the decomposition \eqref{Eq:ArbEnumeratedQgp}.   
\vskip 3mm
($\mathrm{b}$) Let $Q$ be a distributive Mendelsohn quasigroup of order $p^{2n}$.  We have 
\begin{equation}\label{Eq:ArbEnumeratedQgp2}
    Q\cong \prod_i\text{Lin}(\mathbb{Z}/_{p^{r_i}}[\zeta]),
\end{equation}
with $\left|\text{Lin}(\mathbb{Z}/_{p^{r_i}}[\zeta])\right|=p^{2r_i}$ for each $i$, and $\sum r_i=n$.  Then there is a one-to-one correspondence between choices of the decomposition \eqref{Eq:ArbEnumeratedQgp2} (i.e., isomorphism classes) and integer partitions of $2n$ consisting solely of even parts.     
\end{proof}

\section{The Ramified Case}\label{Sec:Ramified}

Classifying distributive MTS whose order is divisible by $3$ presents two issues: $1.)$ The fact that  $3=(1+\zeta)(1+\overline{\zeta})$ is ramified in $\mathbb{Z}[\zeta]$ complicates the direct product decomposition for CML-linear MTS, and $2.)$ the potential presence of nonassociative CML in the direct product decomposition voids our technique of relying solely on the representation theory of the Eisenstein integers.  Issue $2.)$ is beyond the scope of this paper.  This section elaborates issue $1.)$ and presents partial results towards a resolution.  By Corollary \ref{Cor:ModDecompMTS}, we may, at least, state the following decomposition theorem.

\begin{thm}\label{Th:Decomp3}
Let $Q$ be a linear Mendelsohn quasigroup of order $3^n$.  Then there is a finite sequence of positive integers $r_1, \dots, r_l$ such that 
\begin{equation}\label{Eq:Decomp3}
    Q\cong \prod_{i=1}^l\emph{Lin}(\mathbb{Z}[\zeta]/(1+\zeta)^{r_i}, R_i),
\end{equation}
where $\sum_i r_i=n$, and each $R_i$ is an automorphism of the abelian group underlying $\mathbb{Z}[\zeta]/(1+\zeta)^{r_i}$.
\end{thm}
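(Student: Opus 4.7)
The plan is to invoke the corollary of the structure theorem that has already been established, and then use the cardinality constraint on $Q$ together with the classification of Eisenstein primary quotients to rule out every factor except those arising from the ramified prime $1+\zeta$.

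First I would apply Corollary \ref{Cor:ModDecompMTS} directly to $Q = \text{Lin}(M, R)$. This immediately yields a direct product decomposition
\[
Q \cong \prod_{i=1}^{l} \text{Lin}\bigl(\mathbb{Z}[\zeta]/(\pi_{i}^{m_{i}}), R_{i}\bigr),
\]
for some list of prime powers $\pi_{1}^{m_{1}}, \dots, \pi_{l}^{m_{l}}$ (here I use $m_i$ rather than the $n_i$ of the corollary to avoid notational clash with the exponent $n$ in $|Q| = 3^{n}$). What remains is to show that each $\pi_{i}$ may be taken to be $1+\zeta$, and that $\sum_{i} m_{i} = n$.

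Next I would use Theorem \ref{Th:EisQuos} to compute the order of each factor. The three classes of Eisenstein primes (up to units) give the following orders for $\mathbb{Z}[\zeta]/(\pi^{m})$: if $\pi N = p \equiv 1 \bmod 3$, the order is $p^{m}$; if $\pi = p \equiv 2 \bmod 3$ is a rational prime, the order is $p^{2m}$; and if $\pi = 1+\zeta$, the order is $3^{m}$ (in both the even and odd cases, a quick check of Theorem \ref{Th:EisQuos}(c),(d) confirms this via $|C| = 3^{2k+1}$ or $3^{2k}$, consistent with the norm $(1+\zeta)N = 3$). Since $|Q|$ is the product of the orders of the factors and equals $3^{n}$, no factor of type (a) or (b) can appear: any such factor would contribute a prime divisor other than $3$ to $|Q|$. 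Therefore every $\pi_{i}$ must be associate to $1+\zeta$. Because associate primes generate the same ideal, we may replace each $\pi_{i}$ by $1+\zeta$ without changing the quotient ring, so the decomposition takes the form
\[
Q \cong \prod_{i=1}^{l} \text{Lin}\bigl(\mathbb{Z}[\zeta]/(1+\zeta)^{r_{i}}, R_{i}\bigr),
\]
setting $r_{i} = m_{i}$. Finally, $|Q| = \prod_{i} 3^{r_{i}} = 3^{\sum_{i} r_{i}} = 3^{n}$ forces $\sum_{i} r_{i} = n$. Each $R_{i}$ is by construction the image of the $\mathbb{Z}[\zeta]$-action on the corresponding summand, and in particular an automorphism of the underlying abelian group, as required.

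The argument is essentially a bookkeeping exercise on top of Corollary \ref{Cor:ModDecompMTS}; there is no substantive obstacle. The only point that merits a line of care is the reduction from an arbitrary generator $\pi_{i}$ of the primary ideal to the canonical choice $1+\zeta$, which is immediate once one recalls that the ideal (and thus the quotient ring, and thus the resulting linear MTS) depends only on the associate class of the prime.
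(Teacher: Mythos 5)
Your proposal is correct and matches the paper's own (very terse) argument: the paper simply cites Corollary \ref{Cor:ModDecompMTS} and lets the order $3^n$ force every prime factor to be associate to $1+\zeta$, which is exactly the bookkeeping you carry out via Theorem \ref{Th:EisQuos}. Nothing further is needed.
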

What's more, we will prove that if $r_i=2k>2$ is even, then we have $\text{Lin}(\mathbb{Z}[\zeta]/(1+\zeta)^{r_i}, R_i)\cong \text{Lin}((\mathbb{Z}/_{3^k})^2, T),$ where $T$ is the companion matrix of $f(X)\in\mathbb{Z}/_{3^k}[X]$.  Proposition \ref{Prop:ThreeSquClassif}, equivalent to Theorem 2.12.(iii) in \cite{Donovan}, clarifies the case of $r_i=2.$  

\begin{prop}\label{Prop:ThreeSquClassif}
A Mendelsohn quasigroup $\emph{Lin}(\mathbb{Z}[\zeta]/(1+\zeta)^2, R)$ is isomorphic to either $\emph{Lin}((\mathbb{Z}/_3)^2, -I_2),$ or $\emph{Lin}((\mathbb{Z}/_3)^2, T),$ where 
\begin{equation}\label{Eq:CompMatMod3Squ}
    T=\left(\begin{array}{cc}
        0 & -1  \\
        1 & 1
    \end{array}\right)
\end{equation}
is the companion matrix of $f(X)$.
\end{prop}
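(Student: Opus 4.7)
The plan is to reduce the proposition to a similarity classification of $2\times 2$ matrices over the field $\mathbb{Z}/_3$ that are annihilated by $f(X)=X^2-X+1$, then apply Theorem~\ref{Th:K&N}. By Theorem~\ref{Th:EisQuos}(c) taken with $k=1$, the abelian group underlying $\mathbb{Z}[\zeta]/(1+\zeta)^2$ is isomorphic to $(\mathbb{Z}/_3)^2$, so the nuclear (equivalently, abelian-group) automorphism $R$ can be represented by some matrix $A\in\mathrm{GL}_2(\mathbb{Z}/_3)$ satisfying $f(A)=0$.

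Next, I would observe that over $\mathbb{Z}/_3$ the polynomial $f(X)$ factors as $(X+1)^2$; this is the double-root statement of Lemma~\ref{Lmm:DonovanLemma}(c), and is the reason Lemma~\ref{Lmm:rootschar3}(b) forced $\zeta=-1$ in characteristic $3$. Consequently $f(A)=0$ is equivalent to $(A+I_2)^2=0$, i.e.\ $A+I_2$ is a nilpotent $2\times 2$ matrix over the field $\mathbb{Z}/_3$. Over any field there are exactly two similarity classes of such nilpotents: the zero matrix and the rank-one Jordan block. Pulling these back to $A$ yields exactly two similarity classes of candidate $A$: the scalar $A=-I_2$, and the class of $-I_2$ plus a nonzero nilpotent, of which the companion matrix $T$ in \eqref{Eq:CompMatMod3Squ} is a representative.

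To verify the second class is represented by $T$, I would compute that $T$ has characteristic polynomial $f(X)$ (trace $1$, determinant $1$) and that $T\neq -I_2$, so $T+I_2$ is a nonzero nilpotent and thus similar to the standard Jordan block; a quick direct check also confirms $T^{2}-T+I_2=0$, so $T$ actually defines a linear MTS. Moreover, $-I_2$ and $T$ cannot be similar because similarity preserves the property of being scalar. Finally, applying Theorem~\ref{Th:K&N}, conjugate $R$'s give isomorphic linear piques and non-conjugate ones give non-isomorphic piques, so every $\mathrm{Lin}(\mathbb{Z}[\zeta]/(1+\zeta)^2,R)$ is isomorphic to exactly one of $\mathrm{Lin}((\mathbb{Z}/_3)^2,-I_2)$ or $\mathrm{Lin}((\mathbb{Z}/_3)^2,T)$.

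There is no real obstacle here: once the reduction to $f(X)\equiv(X+1)^2\pmod 3$ is made, the argument is just the Jordan classification of nilpotent $2\times 2$ matrices over a field, together with the bookkeeping of Theorem~\ref{Th:K&N}. The only point to be careful about is distinguishing this situation from the higher $r_i=2k>2$ case promised after Theorem~\ref{Th:Decomp3}, where $\mathbb{Z}/_{3^k}$ is only a local ring (not a field) and the Jordan/rational-canonical-form argument used here must be replaced by a Nakayama-style lifting argument analogous to the proof of Proposition~\ref{Prop:twomodthreeclassif}.
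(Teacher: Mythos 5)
Your proof is correct and takes essentially the same route as the paper: reduce via Theorem~\ref{Th:EisQuos}(c) to classifying $2\times 2$ matrices over $\mathbb{Z}/_3$ annihilated by $f(X)\equiv(X+1)^2$ up to similarity, obtain exactly the two classes represented by $-I_2$ and $T$, and conclude by Theorem~\ref{Th:K&N}. The only cosmetic difference is that you carry out the similarity classification via nilpotent Jordan forms of $A+I_2$, while the paper appeals directly to rational canonical forms.
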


\begin{proof}
By Theorem, \ref{Th:EisQuos}.(c), $(\mathbb{Z}/_3)^2$ is the abelian group underlying\\ $\mathbb{Z}[\zeta]/(1+\zeta)^2.$  Similarity classes of $(\mathbb{Z}/_3)^2$-automorphisms are represented by rational canonical matrices.  Matrices annihilated by $f(X)=(X+1)^2$ over $\mathbb{Z}/_3[X]$ have two possible RCFs.  Those are, $-I_2$ and $T$.
\end{proof}

The following lemma is needed for the case of classifying higher $k$-values.

\begin{lmm}\label{Lmm:NoDescToScalar} 
Let $k\geq 2$, $A\in M_2(\mathbb{Z}).$  If $A\equiv -I_2\mod 3,$ then $f(A)\not\equiv 0\mod 3^k$. 
\end{lmm}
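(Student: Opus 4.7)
The plan is to do a direct, one-line matrix computation by writing $A$ as a perturbation of $-I_2$.

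First I would introduce $B\in M_2(\mathbb{Z})$ such that $A = -I_2 + 3B$; this is possible precisely because $A\equiv -I_2\pmod 3$. The next step is to expand
\begin{equation*}
f(A) = A^2 - A + I_2 = (-I_2+3B)^2 - (-I_2+3B) + I_2,
\end{equation*}
and collect terms to obtain
\begin{equation*}
f(A) = 3I_2 - 9B + 9B^2 = 3I_2 + 9(B^2 - B).
\end{equation*}

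From this expression everything follows immediately: reducing modulo $9$ gives $f(A)\equiv 3I_2\pmod 9$, and since $3I_2$ is not the zero matrix in $M_2(\mathbb{Z}/_9)$, we have $f(A)\not\equiv 0\pmod 9$. For any $k\geq 2$, a congruence modulo $3^k$ implies a congruence modulo $9$, so $f(A)\not\equiv 0\pmod{3^k}$ as required.

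There is no real obstacle here; the lemma is essentially a first-order Taylor-type calculation around the double root $-1$ of $f(X)$ modulo $3$, reflecting the fact that the $(1+\zeta)$-adic valuation of $f(-1)$ in $\mathbb{Z}[\zeta]$ is exactly $2$. Once the substitution $A=-I_2+3B$ is made, the identity $f(-1+3t)=3+9(t^2-t)$ in $\mathbb{Z}[t]$ handles the matrix case verbatim because $-I_2$ is central.
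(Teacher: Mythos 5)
Your proof is correct, and it is genuinely different from the paper's. The paper disposes of this lemma by pointing back to the last paragraph of the proof of Lemma \ref{Lmm:CharPoly}, which rests on a machine verification in GAP: no matrix in $M_2(\mathbb{Z}/_9)$ annihilated by $f(X)$ has both off-diagonal entries divisible by $3$ (a condition that $A\equiv -I_2\bmod 3$ certainly forces). Your argument replaces that computer check, for the statement actually at hand, with the elementary expansion $A=-I_2+3B$, giving $f(A)=3I_2+9(B^2-B)\equiv 3I_2\pmod 9$, and then the observation that a congruence modulo $3^k$ with $k\geq 2$ would in particular force one modulo $9$. The computation is routine to verify ($A^2=I_2-6B+9B^2$, $-A=I_2-3B$, and the constant terms sum to $3I_2$), the centrality of $-I_2$ makes the scalar identity $f(-1+3t)=3+9(t^2-t)$ carry over verbatim, and the valuation heuristic you mention is exactly the right way to see why the constant term $3I_2$ cannot be cancelled modulo $9$. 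What your approach buys is a transparent, computer-free and self-contained proof of Lemma \ref{Lmm:NoDescToScalar}; what it does not do is recover the stronger fact the paper's GAP computation establishes (no annihilated matrix over $\mathbb{Z}/_9$ has both off-diagonal entries in $3\mathbb{Z}/_9$), which the paper also leans on inside the proof of Lemma \ref{Lmm:CharPoly} itself, where the diagonal reduction mod $3$ need not be scalar. So your proof fully settles the lemma as stated, but it is a replacement for only part of what the cited GAP paragraph is used for elsewhere.
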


\begin{proof}
The last paragraph of the proof of Lemma \ref{Lmm:CharPoly} furnishes a sufficient argument.
\end{proof}

\begin{prop}\label{Prop:EvenPower3Classif}
Let $k\geq 2$, and $\emph{Lin}(\mathbb{Z}[\zeta]/(1+\zeta)^{2k}, R)$ be a Mendelsohn quasigroup and $M=(\mathbb{Z}/_{3^k})^2$.  Then $\emph{Lin}(\mathbb{Z}[\zeta]/(1+\zeta)^{2k}, R)\cong \emph{Lin}(M, T)$, where 

\begin{equation}\label{Eq:CompMatMod3}
    T=\left(\begin{array}{cc}
        0 & -1 \\
        1 & 1
    \end{array}\right)
\end{equation}
is the companion matrix of $f(X)$. 
\end{prop}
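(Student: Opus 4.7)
The plan is to parallel the proof of Proposition~\ref{Prop:twomodthreeclassif} as closely as possible, making the requisite adjustment for the fact that $f(X) = (X+1)^2$ is \emph{reducible} (indeed has a double root) over the residue field $\mathbb{Z}/_3$, so that the ``no eigenvalues'' shortcut used in the $p\equiv 2\pmod 3$ case is unavailable. First, by Theorem~\ref{Th:EisQuos}(c), the abelian group underlying $\mathbb{Z}[\zeta]/(1+\zeta)^{2k}$ is $M = (\mathbb{Z}/_{3^k})^2$, so $R$ is represented by some $A \in M_2(\mathbb{Z}/_{3^k})$ with $f(A) = 0$. By Theorem~\ref{Th:K&N}, it is enough to show that $A$ is similar to $T$ over $\mathbb{Z}/_{3^k}$.

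Second, invoke Lemma~\ref{Lmm:CharPoly} to conclude that $f(X)$ is the characteristic polynomial of $A$, and then apply Prokip's criterion (as in the proof of Proposition~\ref{Prop:twomodthreeclassif}): $A$ is similar to the companion matrix $T$ of its characteristic polynomial if and only if there exists $v \in M$ such that the matrix $\bigl(\begin{smallmatrix} v \\ vA \end{smallmatrix}\bigr)$ is invertible, equivalently $\{v, vA\}$ is a basis of the free module $M$.

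Third, reduce modulo the maximal ideal $\mathfrak{m} = (3)$ of the local ring $\mathbb{Z}/_{3^k}$, obtaining $\overline{A} \in M_2(\mathbb{Z}/_3)$ with $f(\overline{A}) = 0$. By Proposition~\ref{Prop:ThreeSquClassif}, $\overline{A}$ is similar over $\mathbb{Z}/_3$ to either $-I_2$ or $T$. The first case would force $\overline{A} = -I_2$ since $-I_2$ is central, i.e.\ $A \equiv -I_2 \bmod 3$; this is excluded by Lemma~\ref{Lmm:NoDescToScalar}. Hence $\overline{A}$ is similar to $T$ over $\mathbb{Z}/_3$, and in particular there exists $\overline{v} \in M/\mathfrak{m}M$ with $\{\overline{v}, \overline{v}\,\overline{A}\}$ a basis of the $\mathbb{Z}/_3$-vector space $M/\mathfrak{m}M$.

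Finally, lift $\overline{v}$ to $v \in M$. By Nakayama's lemma in its local-ring form, the set $\{v, vA\}$, which generates $M$ modulo $\mathfrak{m}M$, is a minimal generating set for $M$; since $M$ is free over the local ring $\mathbb{Z}/_{3^k}$, such a minimal generating set is a basis (cf.\ \cite[Th.~2.3]{Matsumura}, as used in the proof of Proposition~\ref{Prop:twomodthreeclassif}). Then the matrix $\bigl(\begin{smallmatrix} v \\ vA \end{smallmatrix}\bigr)$ is invertible, and Prokip's theorem yields $A \sim T$, completing the proof. The main obstacle is the step where one must exclude the $\overline{A} = -I_2$ possibility, and this is precisely the purpose of Lemma~\ref{Lmm:NoDescToScalar}; everything else is a direct transplant of the argument for primes $p \equiv 2 \pmod 3$.
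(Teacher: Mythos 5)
Your proof is correct and follows essentially the same route as the paper's: Theorem~\ref{Th:EisQuos}(c) for the underlying group, Lemma~\ref{Lmm:CharPoly} to identify $f(X)$ as the characteristic polynomial, Prokip's similarity criterion, exclusion of $\overline{A}=-I_2$ via Lemma~\ref{Lmm:NoDescToScalar}, and Nakayama's lemma over the local ring $\mathbb{Z}/_{3^k}$. The only cosmetic difference is that you produce the cyclic vector modulo $3$ from the canonical-form classification of Proposition~\ref{Prop:ThreeSquClassif}, whereas the paper argues directly that the $(-1)$-eigenspace of $\overline{A}$ is one-dimensional and chooses $\overline{v}$ outside it.
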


\begin{proof}
Theorem \ref{Th:EisQuos}.(c) exhibits an abelian group isomorphism \\$\mathbb{Z}[\zeta]/(1+\zeta)^{2k}\cong (\mathbb{Z}/_{3^k})^2$.  Let $A$ denote a matrix representation of $R$.  Because $k\geq 2$, $f(X)$ has no roots in $\mathbb{Z}/_{3^k}$ (cf. Lemma \ref{Lmm:DonovanLemma}(c)), and $\det(I_2X-A)=f(X).$  Just as in the proof of Proposition \ref{Prop:twomodthreeclassif}, it suffices to show that $A$ is similar to the companion matrix of its characteristic polynomial by producing a $v\in(\mathbb{Z}/_{3^k})^2$ that makes the transpose of $(v\phantom{.} vA)$ invertible. 

\indent Define $\overline{A}$ to be the matrix consisting of the entries of $A$ taken modulo $3$.  Then $\overline{A}$ acts on $M/M\mathfrak{m}\cong (\mathbb{Z}/_3)^2$, where $\mathfrak{m}=(3+(3^k))$ is the unique maximal ideal of $\mathbb{Z}/_{3^k}$.  Recall that in $\mathbb{Z}/_3,$ $\det(XI_2-\overline{A})=f(X)=(X+1)^2.$  Let $W$ denote the eigenspace of $\overline{A}$ associated with $\overline{-1}$.  Then $\dim(W)=1$ or $2$.  By Lemma \ref{Lmm:NoDescToScalar}, $\overline{A}\neq-I_2$, so $\dim(W)=1$.  Pick $\overline{v}\in W^{\perp}$.  Then $\{\overline{v}, \overline{v}\overline{A}\}$ forms a basis of $M/M\mathfrak{m}$.  Once again, apply Nakayama's lemma, and we have $\{v, vA\}$ as a basis for the free $\mathbb{Z}/_{3^k}$-module $M$.  Conclude $A$ is similar to the companion of $f(X).$  
\end{proof}

\subsection{Odd powers of $1+\zeta$: mixed congruence}\label{Sec:OddPow}
In this section, we consider possible MTS structures on $\mathbb{Z}[\zeta]/(1+\zeta)^{2k+1}$. 
Upon examination of MTS on $\mathbb{Z}[\zeta]/(1+\zeta)^{2k+1}$ for arbitrary $k$, we are presented with a \emph{mixed congruence representation theory}.  To explain, let $G=\mathbb{Z}/_{3^k}\oplus \mathbb{Z}/_{3^{k+1}}$, $\pi_k:\mathbb{Z}\to \mathbb{Z}/_{3^{k}}$ and $\pi_{k+1}:\mathbb{Z}\to\mathbb{Z}/_{3^{k+1}}$ be natural projections, and $\pi=\pi_k\oplus \pi_{k+1}$.  Theorem \ref{Th:EisQuos}.(d) tells us it is enough to describe conjugacy classes of elements annihilated by $f(X)$ in $\text{Aut}(G)$.  We use the work presented in a concise, highly readable note by Hillar and Rhea to translate this problem into terms of matrix algebra \cite{Hillar}.  Consider the subset 
\begin{equation}\label{Eq:R3}
\mathcal{R}_3=\{(a_{ij})\in M_2(\mathbb{Z}): 3\mid a_{21}\}  
\end{equation}
of $2\times 2$ integral matrices.  This set is a subalgebra of $M_2(\mathbb{Z})$ \cite[Lm.~3.2]{Hillar}, and    
\begin{equation}\label{Eq:R3Homo}
\psi: \mathcal{R}_3\to \text{End}(G); A\mapsto ((x_1, x_2)\pi \mapsto ((x_1, x_2)A)\pi)
\end{equation}
is a surjective ring homomorphism \cite[Th.~3.3]{Hillar} with kernel 
\begin{equation}\label{Eq:R3Ker}
\{(a_{ij})\in M_2(\mathbb{Z}): 3^k\mid a_{11}, a_{12} \text{ and } 3^{k+1}\mid a_{21}, a_{22}\}   
\end{equation}
\cite[Lm.~3.4]{Hillar}.  Furthermore, an endomorphism $A^\psi\in \text{End}(G)$ is an automorphism if and only if $(A \mod 3)\in \text{GL}_2(\mathbb{Z}/_3)$ \cite[Th.~3.6]{Hillar}.
The enumeration techniques of Donovan et. al. classify all MTS on $\mathbb{Z}[\zeta]/(1+\zeta),$ $\mathbb{Z}[\zeta]/(1+\zeta)^{3},$ and $\mathbb{Z}[\zeta]/(1+\zeta)^{5}$.   

\begin{prop}\label{Prop:SmallOrderOddPowerClassif}
Let $\emph{Lin}(\mathbb{Z}[\zeta]/(1+\zeta)^{2k+1}, R)$ be a Mendelsohn quasigroup, $M_1=\mathbb{Z}/_3\oplus \mathbb{Z}/_9$, $M_2=\mathbb{Z}/_9\oplus\mathbb{Z}/_{27}$, $$T=\left(\begin{array}{cc}
    2 & -1 \\
    3 & -1
\end{array}\right),$$ and $\psi$ the ring homomorphism \eqref{Eq:R3Homo}.   
\begin{itemize}
    \item[$(\mathrm{a})$] If $k=0,$ then $\emph{Lin}(\mathbb{Z}[\zeta]/(1+\zeta)^{2k+1}, R)\cong\emph{Lin}(\mathbb{Z}/_3, -1).$
    \item[$(\mathrm b)$] If $k=1,$ Then $\emph{Lin}(\mathbb{Z}[\zeta]/(1+\zeta)^{2k+1}, R)\cong \emph{Lin}(M_1, T^\psi)$. 
    \item[$(\mathrm c)$] If $k=2,$ Then $\emph{Lin}(\mathbb{Z}[\zeta]/(1+\zeta)^{2k+1}, R)\cong \emph{Lin}(M_2, T^\psi)$.
\end{itemize}
\end{prop}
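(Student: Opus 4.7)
The plan is to reduce each case to counting conjugacy classes of automorphisms $R$ satisfying $f(R)=0$ on the underlying abelian group specified by Theorem~\ref{Th:EisQuos}$(\mathrm d)$, using Theorem~\ref{Th:K&N} to convert MTS-isomorphism into matrix conjugacy and exploiting the Hillar-Rhea surjection $\psi:\mathcal{R}_3\to\text{End}(G)$ of \eqref{Eq:R3Homo} to parametrize endomorphisms. For part $(\mathrm a)$, Theorem~\ref{Th:EisQuos}$(\mathrm d)$ with $k=0$ collapses the quotient to $\mathbb{Z}/_3$; among the units $\pm 1$, only $-1$ satisfies $f$ (since $f(1)=1$ and $f(-1)=3$), so $R=-1$ is forced, and Theorem~\ref{Th:K&N} makes this the unique isomorphism class.

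For existence in parts $(\mathrm b)$ and $(\mathrm c)$, a direct $2\times 2$ matrix computation yields $T^2-T+I_2=0$ as an integer matrix, whence $f(T^\psi)=\psi(f(T))=0$ in $\text{End}(M_k)$. Further, $T\in \mathcal{R}_3$ since its $(2,1)$-entry is $3$, and $\det(T \bmod 3) \equiv 1 \bmod 3$, so the Hillar-Rhea automorphism criterion cited after \eqref{Eq:R3Homo} confirms $T^\psi\in\text{Aut}(M_k)$. Hence $\text{Lin}(M_k, T^\psi)$ is a valid Mendelsohn quasigroup on the correct underlying group for each $k\in\{1,2\}$.

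Uniqueness is the substantive content. By Theorem~\ref{Th:K&N}, isomorphism classes of MTS structures on $M_k$ correspond to $\text{Aut}(M_k)$-conjugacy classes of $R\in \text{Aut}(M_k)$ with $f(R)=0$. Via $\psi$, this pulls back to a finite conjugacy problem for matrices in $\mathcal{R}_3$ modulo $\ker\psi$ (described by \eqref{Eq:R3Ker}) whose mod-$3$ reductions lie in $\text{GL}_2(\mathbb{Z}/_3)$. For $k\leq 2$, the search space is small enough to resolve by direct enumeration, using the GAP-based techniques behind \cite[Th.~2.12]{Donovan}, and I expect the outcome to be a single orbit represented by $T^\psi$.

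The main obstacle is that no uniform conceptual argument appears available. The Nakayama-lemma lifting that drove Propositions~\ref{Prop:twomodthreeclassif} and~\ref{Prop:EvenPower3Classif} requires the relevant module to be free over a local ring, but the two cyclic summands of $M_k$ have distinct orders $3^k$ and $3^{k+1}$. Although $R\bmod 3$ still acts on $M_k/3M_k\cong(\mathbb{Z}/_3)^2$ with characteristic polynomial $(X+1)^2$ and cannot reduce to $-I_2$ (by Lemma~\ref{Lmm:NoDescToScalar}), lifting a rational canonical basis through the mixed congruence is blocked by the unequal factor sizes, which is precisely why the proposition is restricted to $k\leq 2$ rather than stated for arbitrary $k$.
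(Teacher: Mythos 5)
Your part $(\mathrm a)$ and your existence check for $(\mathrm b)$, $(\mathrm c)$ (verifying $f(T)=0$ in $M_2(\mathbb{Z})$, $T\in\mathcal{R}_3$, and invertibility of $T\bmod 3$, so that $T^\psi\in\text{Aut}(M_k)$ and $\text{Lin}(M_k,T^\psi)$ is a genuine MTS on the group underlying $\mathbb{Z}[\zeta]/(1+\zeta)^{2k+1}$) are fine, and your reduction of uniqueness to counting $\text{Aut}(M_k)$-conjugacy classes of automorphisms annihilated by $f$ is the right reformulation via Theorem~\ref{Th:K&N} and Theorem~\ref{Th:EisQuos}$(\mathrm d)$. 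The genuine gap is that you never establish the uniqueness itself: the sentence that the finite conjugacy problem is ``small enough to resolve by direct enumeration \dots and I expect the outcome to be a single orbit'' is a plan plus a guess, not a proof. The single-orbit claim is precisely the substantive content of $(\mathrm b)$ and $(\mathrm c)$, and as written nothing in your argument rules out two non-conjugate automorphisms of $\mathbb{Z}/_3\oplus\mathbb{Z}/_9$ (or of $\mathbb{Z}/_9\oplus\mathbb{Z}/_{27}$) annihilated by $f$. You neither perform the computation nor cite a result that records its outcome.

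The paper closes this gap without any new computation, by a pigeonhole count against published enumerations. For $k=1$: Donovan et al.\ list $3$ isomorphism classes of linear MTS of order $3^3$; by Theorem~\ref{Th:Decomp3} every such MTS comes from one of the three Eisenstein module decompositions $(\mathbb{Z}[\zeta]/(1+\zeta))^3$, $\mathbb{Z}[\zeta]/(1+\zeta)^2\oplus\mathbb{Z}[\zeta]/(1+\zeta)$, $\mathbb{Z}[\zeta]/(1+\zeta)^3$; the first two account for the classes of $-I_3$ and $T\oplus(-1)$ (using Proposition~\ref{Prop:ThreeSquClassif}/\ref{Prop:EvenPower3Classif}), which live on $(\mathbb{Z}/_3)^3$, so by Theorem~\ref{Th:K&N} the remaining class is the unique one on $\mathbb{Z}/_3\oplus\mathbb{Z}/_9$, and $\text{Lin}(M_1,T^\psi)$ must represent it. For $k=2$ the same matching of the $7$ enumerated classes of order $3^5$ against the $7$ partitions of $5$ gives exactly one class per decomposition, in particular one for $\mathbb{Z}[\zeta]/(1+\zeta)^5$. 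If you prefer to stay on your route for $k=1$, you could instead combine the explicit list of the six automorphisms of $\mathbb{Z}/_3\oplus\mathbb{Z}/_9$ annihilated by $f$ with their lifts to $\Gamma_0(3)$ (Example~\ref{Ex:MixedCongLift}) and Proposition~\ref{Prop:Lifting} to conclude they are all conjugate; but some such ingredient must actually be supplied, and for $k=2$ your proposal supplies none.
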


\begin{proof}
$(\mathrm{a})$ Quite simply, this follows from the field isomorphism\\ $\mathbb{Z}[\zeta]/(1+\zeta)\cong \mathsf{GF}(3)$

\vskip 3mm
$(\mathrm{b})$ Table $1$ in \cite{Donovan} tells us that there are $3$ isomorphism classes of linear MTS of order $3^3$.  The possible Eisenstein module decompositions are 
\begin{enumerate}
    \item $\left(\mathbb{Z}[\zeta]/(1+\zeta)\right)^3;$
    \item $\mathbb{Z}[\zeta]/(1+\zeta)^2\oplus \mathbb{Z}[\zeta]/(1+\zeta);$
    \item $\mathbb{Z}[\zeta]/(1+\zeta)^3;$
\end{enumerate}
The first must correspond to $-I_3\in M_3(\mathbb{Z}/_3)$, and the second, by Proposition \ref{Prop:EvenPower3Classif}, to $T\oplus (-1),$ where $T$ is the companion of $f(X)\in\mathbb{Z}/_3[X]$.  This leaves us with $\text{Lin}(M_1, T^\psi)$ as the only possible linear MTS structure on $(3)$.

\vskip 3mm
$(\mathrm{c})$ Donovan et. al.'s enumeration relates $7$ isomorphism classes of order-$3^5$ linear MTS.  There are $7$ integer partitions of $5$, so for each decomposition of the form \eqref{Eq:Decomp3}, there is exactly one isomorphism class.  In particular, the $\mathbb{Z}[\zeta]$-module $\mathbb{Z}[\zeta]/(1+\zeta)^5$ gives rise to one isomorphism class.
\end{proof}  

Based upon the evidence proffered by small examples in Proposition \ref{Prop:SmallOrderOddPowerClassif}, we make the following conjecture.

\begin{conj}\label{Conj:ClassifOddPow3}
Given $k\geq 3$ and an MTS $\emph{Lin}(\mathbb{Z}[\zeta]/(1+\zeta)^{2k+1}, R),$ there is a quasigroup isomorphism $\emph{Lin}(\mathbb{Z}[\zeta]/(1+\zeta)^{2k+1}, R)\cong \emph{Lin}(M, T^\psi),$ where $M=\mathbb{Z}/_{3^k}\oplus\mathbb{Z}/_{3^{k+1}},$ and $T^\psi$ is defined along the lines of Proposition \ref{Prop:SmallOrderOddPowerClassif}.  Moreover, the enumeration of order-$3^n$, linear MTS is given by $l(3^n)=P(n)$.
\end{conj}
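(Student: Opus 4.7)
The plan is to adapt the conjugacy arguments of Propositions \ref{Prop:twomodthreeclassif} and \ref{Prop:EvenPower3Classif} to the mixed-exponent abelian group $M=\mathbb{Z}/_{3^k}\oplus\mathbb{Z}/_{3^{k+1}}$, using the Hillar--Rhea surjection $\psi:\mathcal{R}_3\twoheadrightarrow\mathrm{End}(M)$ to translate between group automorphisms and integral matrices. By Theorem \ref{Th:K&N}, the classification reduces to showing that any two roots of $f(X)$ in $\mathrm{Aut}(M)$ are conjugate. Given such an $R$, lift it to $A\in\mathcal{R}_3$ with $A^\psi=R$ and reduce modulo $3$: the resulting $\bar A\in M_2(\mathbb{Z}/_3)$ is annihilated by $(X+1)^2$. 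If one had $\bar A=-I_2$, Lemma \ref{Lmm:NoDescToScalar} would force $f(A)\not\equiv 0\pmod{3^k}$, contradicting $f(A)\in\ker\psi\subseteq 3^kM_2(\mathbb{Z})$. Hence $\bar A$ has rational canonical form $T\bmod 3$, furnishing a base-case similarity $P_0\in\mathrm{GL}_2(\mathbb{Z}/_3)$ with $P_0^{-1}\bar AP_0=T\bmod 3$.

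I would then execute a Hensel-style inductive lift, building $P_j\in\mathcal{R}_3$ with $P_j^\psi\in\mathrm{Aut}(M)$ so that $(P_j^{-1}AP_j)^\psi\equiv T^\psi$ modulo an increasingly fine filtration of $\ker\psi$, terminating at exact equality. The main obstacle is preserving membership in $\mathcal{R}_3$ throughout: in the free-module settings of Propositions \ref{Prop:twomodthreeclassif} and \ref{Prop:EvenPower3Classif}, Nakayama's lemma produces a basis $\{v,vA\}$ that delivers the similarity directly, but the mixed exponents of $M$ break this because each correction must respect the grading condition $3\mid a_{21}$, and the two summands carry distinct annihilators. I expect to handle this by splitting each inductive correction along the summand decomposition---first adjusting the $(1,1)$- and $(1,2)$-entries (which govern the action on the $\mathbb{Z}/_{3^k}$-factor) and then the $(2,1)$- and $(2,2)$-entries (on $\mathbb{Z}/_{3^{k+1}}$)---using the fact that $A$ is congruent to $T$ modulo $3$ after the initial conjugation, so residual errors acquire strictly higher $3$-adic valuation at each step.

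Granted the classification, the enumeration $l(3^n)=P(n)$ follows combinatorially. Theorem \ref{Th:Decomp3} decomposes any order-$3^n$ linear MTS into factors indexed by a partition of $n$; by Propositions \ref{Prop:ThreeSquClassif}, \ref{Prop:EvenPower3Classif}, and the classification above, each factor is determined by its part-size up to isomorphism, except that $r_i=2$ admits both a $T$-type and a $-I_2$-type structure. The identification $\mathrm{Lin}(\mathbb{Z}[\zeta]/(1+\zeta)^2,-I_2)\cong\mathrm{Lin}((\mathbb{Z}[\zeta]/(1+\zeta))^2,-I_2)$ absorbs every $-I_2$-type 2-part into a pair of 1-parts. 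Verifying (by comparison of underlying abelian group invariants together with the minimal polynomial of the $R$-action) that factors with $r_i\geq 3$ admit no nontrivial quasigroup factorization and that distinct partitions yield distinct canonical forms, one obtains the desired bijection between isomorphism classes and partitions of $n$, whence $l(3^n)=P(n)$.
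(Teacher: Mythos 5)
First, note that the statement you are proving is stated in the paper as a \emph{conjecture}: the paper offers only evidence for it (the classification for $k\le 2$ in Proposition \ref{Prop:SmallOrderOddPowerClassif}, obtained by reverse-engineering Donovan et al.'s enumeration, plus the partial result Proposition \ref{Prop:Lifting} and the computational Example \ref{Ex:MixedCongLift}), so there is no proof in the paper to compare against. Judged on its own, your proposal has a genuine gap precisely at its central step. After the correct reduction (via Theorem \ref{Th:K&N} and the Hillar--Rhea map $\psi$) to showing that any two roots of $f(X)$ in $\mathrm{Aut}(M)$ are conjugate, and after the correct use of Lemma \ref{Lmm:NoDescToScalar} to rule out $\bar A=-I_2$, everything rests on the ``Hensel-style inductive lift,'' which you only describe as something you ``expect to handle.'' This is exactly where the ramification of $3$ bites: $f$ has a double root mod $3$ (discriminant $-3$), so the linearized correction problem at each stage amounts to solving $[T,X]\equiv E$ in the appropriate graded piece, and the commutator map $\mathrm{ad}_T$ mod $3$ has a two-dimensional cokernel (the centralizer $\mathbb{Z}[T]$); solvability of each step is therefore a nontrivial obstruction that must be shown to vanish using the full graded kernel condition \eqref{Eq:R3Ker} (note also that $f(A)\in\ker\psi$ gives $3^k$ on the first row but $3^{k+1}$ on the second, not simply $f(A)\equiv 0 \bmod 3^k$), while simultaneously keeping each conjugator in $\mathcal{R}_3$ with invertible reduction mod $3$. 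No argument is given for any of this, and it is the same missing ingredient the paper itself isolates in a different form: showing that every root of $f$ in $\mathrm{Aut}(G)$ lies in the class $\mathcal{C}$ of \eqref{Eq:LiftingClass}, i.e.\ lifts to an exact integral root in $\Gamma_0(3)$, after which Lemma \ref{Lmm:LowerTriMat} and Proposition \ref{Prop:Lifting} finish the conjugacy. Your plan would need an equivalent statement and does not supply it.

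A secondary flaw: in the enumeration step, ``underlying abelian group invariants together with the minimal polynomial of the $R$-action'' do not separate all partitions --- for instance $(2,2)$ and $(2,1,1)$ both yield the group $(\mathbb{Z}/_3)^4$ with $R$-minimal polynomial $(X+1)^2$ mod $3$. The clean route is Theorem \ref{Th:K&N} itself: a quasigroup isomorphism $\mathrm{Lin}(M,R)\cong\mathrm{Lin}(M',R')$ is equivalent to a group isomorphism intertwining $R$ and $R'$, i.e.\ to a $\mathbb{Z}[\zeta]$-module isomorphism, so isomorphism classes of order-$3^n$ linear MTS are in bijection with $(1+\zeta)$-primary module decompositions, giving $l(3^n)=P(n)$ directly (indeed, this part does not require the canonical-form statement at all). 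Your device of absorbing the $-I_2$-type order-$9$ systems into two parts of size $1$ is consistent with this, but the distinctness argument as you state it is insufficient.
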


\subsubsection{Lifting}
\indent The following discussion presents techniques for lifting the mixed congruence representation theory into $M_2(\mathbb{Z})$.  Take $n\geq 1$.  The projection $\mathbb{Z}\to\mathbb{Z}/_n$ extends naturally to a map between matrix groups $\text{SL}_2(\mathbb{Z})\to \text{SL}_2(\mathbb{Z}/_n)$, furnishing a short exact sequence 
\begin{equation}\label{Eq:PrinCongSubgp}
    1\to \Gamma(n)\to \text{SL}_2(\mathbb{Z})\to \text{SL}_2(\mathbb{Z}/_n)\to 1.
\end{equation}
The kernel $\Gamma(n)$ is referred to as the \emph{principal congruence subgroup of level} $n$.  A subgroup of $\text{SL}_2(\mathbb{Z})$ is a \emph{congruence subgroup} if it contains a principal congruence subgroup of any level.  The \emph{Hecke congruence subgroup of level} $n$, denoted $\Gamma_0(n)$, is the preimage of upper-triangular matrices under the projection $\text{SL}_2(\mathbb{Z})\to \text{SL}_2(\mathbb{Z}/_{n})$ \cite[Sec~1.2]{FCMF}.  Notice that the group of determinant-one units of the matrix subalgebra $\mathcal{R}_3$ is precisely $\Gamma_0(3)$.

\begin{lmm}\label{Lmm:ConjClassesSL2Z}
All elements in $M_2(\mathbb{Z})$ annihilated by the polynomial $f(X)$ have determinant $1$, and they lie in the same $\emph{SL}_2(\mathbb{Z})$-conjugacy class.
\end{lmm}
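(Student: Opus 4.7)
The first assertion reduces immediately to Lemma~\ref{Lmm:CharPoly}: setting $S=\mathbb{Z}$ there, any $A\in M_2(\mathbb{Z})$ satisfying $f(A)=0$ has $\det(A)=\mathrm{Tr}(A)=1$ and characteristic polynomial $f$.

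For the conjugacy claim, my plan is to lift the strategy used in Propositions~\ref{Prop:twomodthreeclassif} and \ref{Prop:EvenPower3Classif} --- exhibiting a cyclic vector that produces a similarity with the companion matrix --- from the local setting to the global one.  Declaring $\zeta$ to act as $A$ endows $\mathbb{Z}^2$ with the structure of a $\mathbb{Z}[\zeta]$-module; because $f$ is irreducible and equal to both the minimal and the characteristic polynomial of $A$, this module is torsion-free, and comparing $\mathbb{Q}$-dimensions (noting $\dim_\mathbb{Q}\mathbb{Q}[\zeta]=2$) shows its $\mathbb{Z}[\zeta]$-rank to be one.  The Eisenstein integers form a PID (indeed a Euclidean domain under the norm~\eqref{Eq:EisIntNorm}), so the module is free of rank one, and any generator $v$ furnishes a $\mathbb{Z}$-basis $\{v, vA\}$ of $\mathbb{Z}^2$.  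The transition matrix $P$ between the standard basis and this one lies in $\mathrm{GL}_2(\mathbb{Z})$ and satisfies $P^{-1}AP=T$, where $T$ is the companion matrix of $f$ from \eqref{Eq:CompMat}.

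The main obstacle will be promoting $P\in\mathrm{GL}_2(\mathbb{Z})$ to an element of $\mathrm{SL}_2(\mathbb{Z})$.  The centraliser of $T$ inside $\mathrm{GL}_2(\mathbb{Z})$ coincides with the embedded unit group $\mathbb{Z}[\zeta]^\times=\{\pm I,\pm T,\pm T^2\}$, and since every Eisenstein unit has norm one, each centralising element already has determinant $+1$; likewise, replacing the cyclic vector $v$ by $vu$ for a unit $u\in\mathbb{Z}[\zeta]^\times$ scales $\det(P)$ only by the norm of $u$, again $1$.  Consequently, no standard change-of-basis manoeuvre alters the parity of $\det(P)$, and the argument cannot be closed by purely module-theoretic bookkeeping.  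I expect the proof to proceed by producing, for each $A$ whose cyclic-vector argument yields $\det(P)=-1$, an explicit auxiliary $\mathrm{SL}_2(\mathbb{Z})$-conjugation to $T$ --- for instance by exploiting an $f$-specific symmetry among matrices of trace and determinant one --- and this is where I would concentrate the technical effort.
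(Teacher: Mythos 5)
Your first claim is handled exactly as in the paper: Lemma~\ref{Lmm:CharPoly} with $S=\mathbb{Z}$ gives $\det(A)=\mathrm{Tr}(A)=1$, so every integral $A$ with $f(A)=0$ lies in $\mathrm{SL}_2(\mathbb{Z})$ with characteristic polynomial $f$. For the conjugacy claim you depart from the paper, which simply quotes the trace-versus-binary-quadratic-form count of \cite{Chowla}; your Latimer--MacDuffee-style argument (regard $\mathbb{Z}^2$ as a $\mathbb{Z}[\zeta]$-module via $A$, use that $\mathbb{Z}[\zeta]$ is a PID to produce a cyclic vector) is correct as far as it goes, but it only yields $\mathrm{GL}_2(\mathbb{Z})$-conjugacy to the companion matrix \eqref{Eq:CompMat}, and you explicitly stop short of promoting the conjugator into $\mathrm{SL}_2(\mathbb{Z})$. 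As a proof of the stated lemma the proposal is therefore incomplete.

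However, the obstruction you isolated is not a technicality to be engineered around in a final step: it is fatal, and the lemma as stated is false. Since the $\mathrm{GL}_2(\mathbb{Z})$-centralizer of $T$ is the embedded unit group $\{\pm I_2,\pm T,\pm T^2\}$, all of determinant $1$ (as you observed), the single $\mathrm{GL}_2(\mathbb{Z})$-class splits into two $\mathrm{SL}_2(\mathbb{Z})$-classes. Concretely, $T$ and $T^{-1}=I_2-T$ are both annihilated by $f$, yet solving $PT=T^{-1}P$ over $\mathbb{Z}$ forces
\[
P=\left(\begin{array}{cc} p & p+r \\ r & -p \end{array}\right),\qquad \det P=-(p^2+pr+r^2)<0,
\]
so no conjugating matrix has determinant $1$. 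This also locates the flaw in the paper's own proof: at the elliptic trace $t=1$ the count from \cite{Chowla} must take in forms of discriminant $-3$ of both signs, and the positive and negative definite forms $\pm(x^2+xy+y^2)$ are inequivalent under $\mathrm{SL}_2(\mathbb{Z})$, giving two classes, matching the pair $T$, $T^{-1}$. The statement your module argument actually proves---all integral matrices annihilated by $f$ form a single $\mathrm{GL}_2(\mathbb{Z})$-class, because $\mathbb{Z}[\zeta]$ has class number one---is the correct salvage, and any downstream use (e.g.\ Lemma~\ref{Lmm:LowerTriMat}, whose proof begins by invoking this lemma) must be reworked to account for the second $\mathrm{SL}_2(\mathbb{Z})$-class, represented by $I_2-T$.
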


\begin{proof}
Suppose $A\in M_2(\mathbb{Z})$, and that $f(A)=0$. By Lemma \ref{Lmm:CharPoly}, we have $\det(A)=\text{Tr}(A)=1$.  Thus, $A\in \text{SL}_2(\mathbb{Z})$.  In \cite[Th.~1]{Chowla}, it is shown that the number of $\text{SL}_2(\mathbb{Z})$-conjugacy classes whose constituents have trace $t$ is equal to the number of equivalence classes of binary quadratic forms with discriminant $t^2-4$.  Up to equivalence, there is only one binary quadratic form with discriminant $1^2-4=-3.$  
\end{proof}

\begin{lmm}\label{Lmm:LowerTriMat}
Suppose $A\in \Gamma_0(3)$, and $f(A)=0$.  Then there exists some $P\in \Gamma_0(3)$ so that \begin{equation}\label{Eq:Lifting}
    P^{-1}AP=\left(\begin{array}{cc}
        2 & -1 \\
        3 & -1
    \end{array}\right).
\end{equation}
\end{lmm}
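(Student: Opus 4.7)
The plan is to apply Lemma~\ref{Lmm:ConjClassesSL2Z} to obtain an $\mathrm{SL}_2(\mathbb{Z})$-conjugator carrying $A$ to $T$, and then to argue via reduction modulo $3$ that this conjugator must already lie in $\Gamma_0(3)$.

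By Lemma~\ref{Lmm:CharPoly}, $A$ has determinant and trace both equal to $1$, so its characteristic polynomial is $f(X)$; Lemma~\ref{Lmm:ConjClassesSL2Z} then supplies $Q \in \mathrm{SL}_2(\mathbb{Z})$ with $Q^{-1}AQ = T$, and the task reduces to verifying $Q \in \Gamma_0(3)$. Write $\overline{A}, \overline{T}, \overline{Q}$ for the images in $\mathrm{SL}_2(\mathbb{Z}/_3)$. Since $A, T \in \Gamma_0(3)$, both $\overline{A}$ and $\overline{T}$ lie in the upper-triangular Borel $B \subset \mathrm{SL}_2(\mathbb{Z}/_3)$; moreover $f(X) = (X+1)^2$ over $\mathbb{Z}/_3$, so each reduction has the shape $\begin{pmatrix} -1 & c \\ 0 & -1 \end{pmatrix}$.

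I would next rule out the scalar case $\overline{A} = -I_2$. Writing $A = -I_2 + 3B$ for some $B \in M_2(\mathbb{Z})$ and expanding the identity $f(A) = 0$ yields $3(I_2 - 3B + 3B^2) = 0$, whence $I_2 = 3B(I_2 - B)$; taking determinants gives $1 = 9\det(B)\det(I_2 - B)$, an impossibility over $\mathbb{Z}$. Hence $\overline{A} \neq -I_2$, so its top-right entry is a unit of $\mathbb{Z}/_3$; the analogous fact for $\overline{T}$ is immediate. It follows that each of $\overline{A}$ and $\overline{T}$ has a one-dimensional $(-1)$-eigenspace, and because both matrices are upper triangular this eigenspace coincides, in each case, with the unique $B$-stable line $L \subset (\mathbb{Z}/_3)^2$.

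Finally, the conjugation relation $\overline{A} = \overline{Q}\,\overline{T}\,\overline{Q}^{-1}$ transports the $(-1)$-eigenline of $\overline{T}$ onto that of $\overline{A}$, yielding $\overline{Q} L = L$. Hence $\overline{Q}$ stabilizes $L$, equivalently $\overline{Q} \in B$, so $Q \in \Gamma_0(3)$, and setting $P = Q$ completes the proof. The principal obstacle is the scalar-exclusion step: one must exploit that $f(A) = 0$ holds on the nose over $\mathbb{Z}$ (not merely modulo some power of $3$) in order for the determinant computation to rule out $\overline{A} = -I_2$ and thereby pin down the eigenline $L$.
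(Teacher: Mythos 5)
Your proof is correct, and while it follows the same skeleton as the paper's (invoke Lemma~\ref{Lmm:ConjClassesSL2Z} to get an $\mathrm{SL}_2(\mathbb{Z})$-conjugator onto the target matrix, then show by reduction modulo $3$ that this conjugator already lies in $\Gamma_0(3)$), the way you execute the mod-$3$ step is genuinely different. The paper excludes the scalar reduction $A\equiv -I_2 \bmod 3$ by citing Lemma~\ref{Lmm:NoDescToScalar}, whose proof ultimately rests on a GAP verification modulo $9$ carried out in Lemma~\ref{Lmm:CharPoly}, and then finishes with a second GAP computation enumerating which elements of $\mathrm{SL}_2(\mathbb{Z}/_3)$ conjugate the two candidate upper-triangular reductions \eqref{Eq:UpperTriangMat1}, \eqref{Eq:UpperTriangMat2} appropriately; only those with vanishing lower-left entry occur. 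You replace both computer checks with conceptual arguments: writing $A=-I_2+3B$ and extracting $I_2=3B(I_2-B)$ from $f(A)=0$, the determinant identity $1=9\det(B)\det(I_2-B)$ is an integral contradiction, which in fact gives a computation-free proof of the relevant instance of Lemma~\ref{Lmm:NoDescToScalar}; and since both reductions $\overline{A}$ and $\overline{T}$ are then nontrivial unipotent-type upper-triangular matrices whose $(-1)$-eigenline is the Borel-stable line $L$, the relation $\overline{A}=\overline{Q}\,\overline{T}\,\overline{Q}^{-1}$ forces $\overline{Q}L=L$, i.e.\ $\overline{Q}$ lies in the Borel and $Q\in\Gamma_0(3)$. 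The eigenline-transport argument also subsumes the paper's case split between the two possible reductions of $A$, since only the eigenline, not the particular off-diagonal entry, matters. What the paper's route buys is brevity given that the GAP infrastructure was already in place; what yours buys is a self-contained, machine-free proof that makes the group-theoretic reason for the containment in $\Gamma_0(3)$ transparent. The only cosmetic caveat is to fix a consistent action convention (the paper acts on row vectors on the right), but the stabilizer-of-a-line argument goes through verbatim in either convention.
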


\begin{proof}
Lemma \ref{Lmm:ConjClassesSL2Z} tells of a $P\in \text{SL}_2(\mathbb{Z})$ for which \eqref{Eq:Lifting} holds.  We prove that any such $P$ must belong to $\Gamma_0(3)$.  Our argument relies on the fact that when the entries of $A$ and $P$ are taken modulo $3$, we get invertible matrices in $\text{SL}_2(\mathbb{Z}/_3)$.  Now,
\begin{equation*}
    P^{-1}AP\equiv \left(\begin{array}{cc}
        -1 & -1 \\
        0 & -1
    \end{array}\right) \mod 3.
\end{equation*}
We claim that either
\begin{equation}\label{Eq:UpperTriangMat1}
    A\equiv \left(\begin{array}{cc}
        -1 & -1 \\
       0  & -1
    \end{array}\right) \mod 3,
\end{equation}
or 
\begin{equation}\label{Eq:UpperTriangMat2}
    A\equiv \left(\begin{array}{cc}
        -1 & 1 \\
        0 & -1
    \end{array}\right) \mod 3.
\end{equation}
Notice that $f(A)=0\equiv 0 \mod 3.$  The only upper-triangular matrices in $\text{SL}_2(\mathbb{Z}/_3)$ annihilated by $f(X)$ are \eqref{Eq:UpperTriangMat1}, \eqref{Eq:UpperTriangMat2}, and $-I_2$.  However, since $f(A)=0\equiv 0\mod 9$ we cannot have $A\equiv -I_2\mod 3$, for this would contradict Lemma \ref{Lmm:NoDescToScalar}.  Thus, \eqref{Eq:UpperTriangMat1} or \eqref{Eq:UpperTriangMat2} holds.  A straightforward computation in GAP verifies that the only elements of $\text{SL}_2(\mathbb{Z}/_3)$ conjugating \eqref{Eq:UpperTriangMat1} to itself, or \eqref{Eq:UpperTriangMat2} to \eqref{Eq:UpperTriangMat1} contain a zero in the lower left-hand entry.  Thus, $P$ is upper-triangular in $\text{SL}_2(\mathbb{Z}/_3)$, so $P\in \Gamma_0(3).$   
\end{proof}

For our next proposition, we define 
\begin{equation}\label{Eq:LiftingClass}
\mathcal{C}=\{\alpha\in \text{Aut}(G)\mid \exists A\in \Gamma_0(3) \phantom{.} (A^\psi=\alpha \wedge A^2-A+I_2=0)\}.
\end{equation}

\begin{prop}\label{Prop:Lifting}
All members of $\mathcal{C}$ are conjugate.
\end{prop}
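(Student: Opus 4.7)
The plan is to use Lemma \ref{Lmm:LowerTriMat} to reduce every element of $\mathcal{C}$ to a common standard form via $\Gamma_0(3)$-conjugation, and then push that conjugation forward through the ring homomorphism $\psi$.

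More precisely, let $\alpha_1,\alpha_2\in\mathcal{C}$. By definition there exist $A_1,A_2\in\Gamma_0(3)$ with $f(A_i)=0$ and $A_i^\psi=\alpha_i$. Applying Lemma \ref{Lmm:LowerTriMat} to each $A_i$ produces matrices $P_1,P_2\in\Gamma_0(3)$ such that $P_i^{-1}A_iP_i=T$, where $T=\bigl(\begin{smallmatrix}2&-1\\3&-1\end{smallmatrix}\bigr)$. Setting $Q=P_1P_2^{-1}$, which lies in $\Gamma_0(3)$ because $\Gamma_0(3)$ is a group, we obtain $A_1=QA_2Q^{-1}$ in $M_2(\mathbb{Z})$.

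Next I would transfer this equality into $\text{Aut}(G)$ via $\psi$. Because $\psi:\mathcal{R}_3\to\text{End}(G)$ is a ring homomorphism (\cite[Th.~3.3]{Hillar}), and $\Gamma_0(3)\subseteq\mathcal{R}_3$, applying $\psi$ to $A_1=QA_2Q^{-1}$ yields $\alpha_1=Q^\psi\alpha_2(Q^{-1})^\psi=Q^\psi\alpha_2(Q^\psi)^{-1}$, provided $Q^\psi$ is invertible in $\text{End}(G)$. Since $Q\in\text{SL}_2(\mathbb{Z})$, its reduction $Q\bmod 3$ has determinant $1$ and so lies in $\text{GL}_2(\mathbb{Z}/_3)$; the criterion of \cite[Th.~3.6]{Hillar} then guarantees $Q^\psi\in\text{Aut}(G)$. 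Hence $\alpha_1$ and $\alpha_2$ are conjugate in $\text{Aut}(G)$.

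The substantive content of the proof is really absorbed by Lemma \ref{Lmm:LowerTriMat}; the remaining challenge here is just bookkeeping: checking that every conjugating matrix produced by that lemma is retained in $\Gamma_0(3)$ so that it lives in the domain of $\psi$, and verifying that its image is invertible. Both checks reduce to the fact that $\Gamma_0(3)$ is closed under multiplication and inversion and that reduction mod $3$ sends $\text{SL}_2(\mathbb{Z})$ to $\text{SL}_2(\mathbb{Z}/_3)\subseteq\text{GL}_2(\mathbb{Z}/_3)$, so no serious obstacle arises.
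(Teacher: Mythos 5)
Your proof is correct and follows essentially the same route as the paper: apply Lemma \ref{Lmm:LowerTriMat} to obtain $\Gamma_0(3)$-conjugations to the fixed matrix $T$, then transport the conjugation through the ring homomorphism $\psi$. The only cosmetic difference is that the paper conjugates each element of $\mathcal{C}$ to the common representative $T^\psi$ rather than composing two such conjugations, which is the same argument in substance.
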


\begin{proof}
Let $\alpha\in\mathcal{C}$ and $A\in\Gamma_0(3)$ so that $A^\psi=\alpha,$ and $f(A)=0$ over $\mathbb{Z}$.  We show $\alpha$ is conjugate to
$$\gamma=\left(\begin{array}{cc}
    2 & -1 \\
     3 & -1
\end{array}\right)^\psi.$$
By Lemma \ref{Lmm:LowerTriMat}, there is some $P\in \Gamma_0(3)$ so that \eqref{Eq:Lifting} holds.  Therefore, $\gamma=(P^{-1}AP)^\psi=(P^\psi)^{-1}\alpha P^\psi$. 
\end{proof}

\begin{example}\label{Ex:MixedCongLift}
The key to proving Conjecture \ref{Conj:ClassifOddPow3} may be showing that all members of $\text{Aut}(G)$ annihilated by $f(X)$ belong to $\mathcal{C}$.  This is the case for $G=\mathbb{Z}/_3\oplus\mathbb{Z}/_9$.  Matrix representatives for the six automorphisms annihilated by $f(X)$ and corresponding lifts into $\Gamma_0(3)$ are given below:
\begin{align*}
    \left(\begin{array}{cc}
        2 & 2 \\
        3 & 8
    \end{array}\right)&\equiv \left(\begin{array}{cc}
        2 & -1 \\
        3 & -1
    \end{array}\right); \\
    \left(\begin{array}{cc}
       2  & 1 \\
        6 & 8
    \end{array}\right)&\equiv\left(\begin{array}{cc}
        2 & 1 \\
        -3 & -1
    \end{array}\right);\\
    \left(\begin{array}{cc}
        2 & 2 \\
        3 & 5
    \end{array}\right)&\equiv\left(\begin{array}{cc}
        5 & -1 \\
        21 & -4
    \end{array}\right);\\
     \left(\begin{array}{cc}
        2 & 1 \\
        6 & 5
    \end{array}\right)&\equiv\left(\begin{array}{cc}
        5 & 1 \\
        -21 & -4
    \end{array}\right);\\
     \left(\begin{array}{cc}
        2 & 2 \\
        3 & 2
    \end{array}\right)&\equiv\left(\begin{array}{cc}
        26 & -31 \\
        21 & -25
    \end{array}\right);\\
     \left(\begin{array}{cc}
        2 & 1 \\
        6 & 2
    \end{array}\right)&\equiv\left(\begin{array}{cc}
        26 & 31 \\
        -21 & -25
    \end{array}\right).
    \end{align*}
These matrix representations were obtained by implementing the procedure outlined in the proof of \cite[Th.~4.1]{Hillar} in GAP.  
\end{example}

\section{Combinatorial Properties of Linear MTS}\label{Sec:IncProp}

\subsection{Purity}\label{Sec:Purity}
\begin{definition}\label{Def:PureMTS}
A Mendelsohn triple system $(Q, \mathcal{B})$ is \emph{pure} if whenever $a, b\in Q$, we have $(a\phantom{.} b\phantom{.}ab)\in \mathcal{B}\implies (b\phantom{.} a\phantom{.}ab)\notin\mathcal{B}$.  
\end{definition}
Definition \ref{Def:PureMTS} is equivalent to anticommutativity of the underlying quasigroup.  The class of anticommutative quasigroups is specified in $\mathbf{Q}$ by the \emph{quasi-identity} (one defined by implication; cf. \cite[Def.~V.2.24]{Burris}) 
\begin{equation}\label{Eq:Quasi-Identity}
   xy=yx\implies x=y. 
\end{equation}
Hence, the class of anticommutative quasigroups forms a \emph{quasivariety}, or a class of algebras defined by quasi-identities.  Because Steiner triple systems are equivalent to MTS whose underlying quasigroup is commutative, purity can be seen as a quality of MTS which are maximally non-Steiner.

\indent Constructing linear MTS over finite fields, Mendelsohn proved the existence of pure MTS of order $p$ for $p\equiv 1\mod 3$ and of order $p^2$ for $p\equiv 2\mod 3$ \cite[Ths.~6,~7]{MTS}.  The existence spectrum of pure MTS was expanded using linear techniques \cite{Aczel}, until Bennett and Mendelsohn showed that the existence spectrum of pure MTS matches (with the exception of order $3$) that of all MTS.  That it to say, a pure MTS of order $n$ exists, with the exception of $n=1, 3, 6,$ if and only if $n\equiv 0, 1\mod 3$ \cite{PureMTS}. 

\begin{lmm}\label{Lmm:PureDirProd}
Let $Q_1$ and $Q_2$ be nonempty quasigroups.  The direct product $Q_1\times Q_2$ is anticommutative if and only if each of $Q_1$ and $Q_2$ are anticommutative.
\end{lmm}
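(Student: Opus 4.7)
The plan is to handle the two implications separately, with the reverse direction being essentially formal and the forward direction being where the hypothesis of non-emptiness earns its keep.

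For the reverse direction, assume each $Q_i$ is anticommutative, and suppose $(x_1,x_2)(y_1,y_2)=(y_1,y_2)(x_1,x_2)$ in $Q_1\times Q_2$. Since multiplication in the direct product is coordinatewise, this is equivalent to $x_iy_i=y_ix_i$ for $i=1,2$. Applying \eqref{Eq:Quasi-Identity} in each factor gives $x_i=y_i$ for each $i$, hence the two tuples coincide.

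For the forward direction, assume $Q_1\times Q_2$ is anticommutative and suppose $x_1y_1=y_1x_1$ in $Q_1$. Here I would choose any element $a\in Q_2$ (possible precisely because $Q_2$ is nonempty) and form the pairs $(x_1,a)$ and $(y_1,a)$. Then $(x_1,a)(y_1,a)=(x_1y_1,a^2)=(y_1x_1,a^2)=(y_1,a)(x_1,a)$, so anticommutativity of $Q_1\times Q_2$ forces $(x_1,a)=(y_1,a)$, whence $x_1=y_1$. The analogous argument with roles swapped, exploiting that $Q_1$ is nonempty, delivers anticommutativity of $Q_2$.

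The only potential obstacle is the non-emptiness hypothesis, which is genuinely needed: the forward direction explicitly requires producing an element in the opposite factor in order to build a nontrivial equality in the product. Everything else reduces to unwinding the coordinatewise definition of the product quasigroup and the logical form of the quasi-identity \eqref{Eq:Quasi-Identity}, so no further subtleties should arise.
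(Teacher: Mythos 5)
Your proof is correct and follows essentially the same route as the paper: the forward direction is exactly the paper's argument (stated there in contrapositive form), pairing commuting elements of one factor with a fixed element of the nonempty other factor. The only cosmetic difference is in the easy direction, where the paper cites the general fact that quasivarieties are closed under direct products while you verify the quasi-identity \eqref{Eq:Quasi-Identity} coordinatewise, which amounts to the same thing.
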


\begin{proof}
Quasivarities are closed under direct product \cite[Th.~V.2.25]{Burris}.  Hence, the product of anticommutative quasigroups is anticommutative.  To obtain the other direction, we prove the contrapositive.  Suppose $Q_1$ is not anticommutative and that $a, b\in Q_1$ are distinct elements that commute.  Then for any $x\in Q_2$ (recall we assume each quasigroup to be nonempty), $(a, x)\neq (b, x)$, while  $(a, x)(b, x)=(ab, x^2)=(ba, x^2)=(b, x)(a, x).$
\end{proof}

\begin{prop}\label{Prop:PureMTS}
Every DNR MTS is pure.
\end{prop}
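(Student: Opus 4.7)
The plan is to show anticommutativity directly for any linear MTS $\text{Lin}(M,R)$ whose underlying abelian group $M$ has order coprime with $3$, without needing to appeal to the full direct product decomposition of Theorem \ref{Th:DirectSumDecomp}. (One could instead reduce to prime power factors using Lemma \ref{Lmm:PureDirProd}, but the uniform argument is cleaner.)

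First, I would translate the commutativity condition into linear-algebraic terms. Given $x,y\in M$, the identity $xy=yx$ unfolds as $xR+y(1-R)=yR+x(1-R)$, which rearranges to $(x-y)(2R-1)=0$. Thus anticommutativity is equivalent to the assertion that the abelian group endomorphism $2R-1$ of $M$ has trivial kernel.

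The key computation is then to evaluate $(2R-1)^2$ using the defining relation $R^2-R+1=0$, which gives $R^2=R-1$. A direct expansion yields
\begin{equation*}
(2R-1)^2 = 4R^2-4R+1 = 4(R-1)-4R+1 = -3,
\end{equation*}
i.e., $(2R-1)^2$ acts on $M$ as multiplication by $-3$. Consequently, if $(x-y)(2R-1)=0$, then applying $2R-1$ once more yields $-3(x-y)=0$.

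Finally, because $|M|$ is coprime with $3$, the integer $3$ is a unit in $\mathbb{Z}/|M|\mathbb{Z}$, so multiplication by $3$ is an automorphism of $M$; hence $3(x-y)=0$ forces $x=y$. This establishes the quasi-identity \eqref{Eq:Quasi-Identity} for $\text{Lin}(M,R)$, and therefore shows the MTS is pure. The only mild subtlety is ensuring that the computation $(2R-1)^2=-3$ is interpreted correctly as an endomorphism identity on $M$ coming from the $\mathbb{Z}[\zeta]$-module structure (which is precisely the content of Theorem \ref{Th:CatRelat}); no deeper obstacle arises.
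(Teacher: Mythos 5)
Your proof is correct, and it takes a genuinely different (and slicker) route than the paper. The paper first reduces via Lemma \ref{Lmm:PureDirProd} and the decomposition of Theorem \ref{Th:DirectSumDecomp} to the two families of factors, and then verifies invertibility of $2R-1$ case by case: for $p\equiv 1\bmod 3$ by a root-of-$f$ argument modulo $p^n$, and for $p\equiv 2\bmod 3$ by computing $\det(2A-I_2)=3$ for the companion matrix $A$ and noting $3$ is a unit modulo $p^n$. You bypass the decomposition entirely with the single identity $(2R-1)^2=4(R^2-R)+1=-3$ in the commutative subring $\mathbb{Z}[R]\subseteq\mathrm{End}(M)$ (the endomorphism-ring image of $(2\zeta-1)^2=-3$ in $\mathbb{Z}[\zeta]$; the paper's determinant $3$ is exactly the norm counterpart of this), so that $(x-y)(2R-1)=0$ forces $3(x-y)=0$, and coprimality of $|M|$ with $3$ makes multiplication by $3$ an automorphism of $M$, giving $x=y$. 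Your argument is shorter, uniform over all DNR MTS (and would even apply to any linear Mendelsohn pique on an abelian group on which $3$ acts invertibly, finite or not), whereas the paper's version leans on machinery it has already built and whose explicit case structure it reuses for the later combinatorial characterizations; both are valid, and yours trades the classification input for a one-line ring identity. The only caveats --- that $(2R-1)^2=-3$ is an identity of endomorphisms because integer multiples and powers of $R$ commute, and that anticommutativity is exactly injectivity of $2R-1$ --- you have flagged and handled correctly.
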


\begin{proof}
By Lemma \ref{Lmm:PureDirProd}, it suffices to show that for $p\equiv 1\mod 3$, $\text{Lin}(\mathbb{Z}/_{p^n}, a)$ is pure, and that for $p\equiv 2\mod 3$, $\text{Lin}(\mathbb{Z}/_{p^n}[\zeta])$ is pure.  In either case, $xy=yx$ means $xR+y(1-R)=yR+x(1-R)\implies x(2R-1)=y(2R-1)$, so it will suffice to show $2R-1$ is invertible.  

\indent The case of $(\mathbb{Z}/_p, a)$ is covered in \cite[Th.~6]{MTS}.  If $2a-1$ were not invertible in $\mathbb{Z}/_p$, then $2a-1=0\implies a=2^{-1}$.  By Lemma \ref{Lmm:rootschar3}, $f(2)=3=0$, a contradiction.  Furthermore, by way of contradiction, suppose $a\in \mathbb{Z},$ $f(a)\equiv 0\mod p^n,$ and $2a-1$ is not a unit modulo $p^n,$ where $n>1$.  This means that $2a-1=mp^k$ for some $m\in \mathbb{Z}$ and $1\leq k\leq n$, whereby $f(a)\equiv 0\mod p,$ and $2a-1\equiv0\mod p$, which contradicts the $n=1$ case. 

\indent Fix $p\equiv 2 \mod 3.$  We use $$A=\left(\begin{array}{cc}
    0 & -1 \\
    1 & 1
\end{array}\right)$$ to represent $\text{Lin}(\mathbb{Z}/_{p^n}[\zeta])$.  Then 
\begin{align*}
  \det\left(2A-I_2\right)&=\det\left(\begin{array}{cc}
     -1  & -2 \\
      2 & 1
  \end{array}\right)\\
  &=3,
\end{align*}
which is a unit modulo $p^n,$ so $2A-I_2$ is invertible.
\end{proof}

\begin{prop}\label{Prop:ImpureMTS}
Let $M$ be an abelian group of order $3^n$, $R\in \emph{Aut}(M)$ such that $\emph{Lin}(M, R)$ is Mendelsohn.  Then $M$ is not a pure MTS.
\end{prop}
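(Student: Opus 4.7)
The plan is to leverage the algebraic characterization of purity already established in Proposition~\ref{Prop:PureMTS}: an MTS $\text{Lin}(M, R)$ is pure if and only if $2R - I$ is an invertible endomorphism of $M$. So the task reduces to showing that $2R - I$ fails to be an automorphism of $M$ whenever $M$ is a finite abelian $3$-group of order $3^n$ with $n \geq 1$.

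Since $M$ is a finite abelian $3$-group, an endomorphism $\varphi$ of $M$ is an automorphism if and only if the induced map on the Frattini quotient $M/3M$ is invertible (this is the standard Nakayama-type reduction for finite $p$-groups; alternatively, one observes that $\varphi$ is an automorphism iff it is surjective, which by counting is equivalent to surjectivity mod $3$). Since $n \geq 1$, $M/3M$ is a nonzero $\mathbb{F}_3$-vector space, so it suffices to show $2R - I$ is not invertible on $M/3M$.

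Now I would carry out the key computation modulo $3$. Since $2 \equiv -1 \pmod{3}$, we have $2R - I \equiv -(R + I) \pmod 3$, so it suffices to show $R + I$ is not invertible on $M/3M$. Using the identity $f(R) = R^2 - R + I = 0$, i.e., $R^2 = R - I$, we expand
\begin{equation*}
(R+I)^2 \;=\; R^2 + 2R + I \;=\; (R - I) + 2R + I \;=\; 3R \;\equiv\; 0 \pmod 3.
\end{equation*}
Thus $R + I$ is a nilpotent endomorphism of the nonzero $\mathbb{F}_3$-vector space $M/3M$, hence has nontrivial kernel and fails to be invertible. Consequently $2R - I$ is not an automorphism of $M/3M$, not an automorphism of $M$, and $\text{Lin}(M, R)$ is not anticommutative; by Definition~\ref{Def:PureMTS} it is not pure.

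I do not anticipate a substantial obstacle: the argument is a direct computation. The only subtlety worth flagging is the Nakayama-style reduction from $M$ to $M/3M$, which is routine for finite abelian $p$-groups and which also underlies the companion-matrix arguments of Propositions~\ref{Prop:twomodthreeclassif} and~\ref{Prop:EvenPower3Classif}.
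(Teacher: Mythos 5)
Your argument is correct, and it takes a genuinely different, more economical route than the paper. The paper first reduces, via Corollary~\ref{Cor:ModDecompMTS} and Lemma~\ref{Lmm:PureDirProd}, to factors $\text{Lin}(\mathbb{Z}[\zeta]/(1+\zeta)^{r}, R)$, treats $r=1$ separately as the Steiner case, and for $r\geq 2$ represents $R$ by a $2\times 2$ integral matrix (invoking the mixed-congruence machinery of Section~\ref{Sec:OddPow} for odd $r$) so that Lemma~\ref{Lmm:CharPoly} --- which itself rests in part on a GAP verification --- yields $\det(A)\equiv\mathrm{Tr}(A)\equiv 1$, whence $\det(2A-I_2)\equiv 0 \bmod 3$; a nonzero $v$ in the kernel of $2A-I_2$ then commutes with $-v\neq v$. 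You bypass the direct product decomposition, the matrix representation theory, and the GAP-dependent lemma entirely: working on the Frattini quotient $M/3M$ and using $R^2=R-1$ to get $(R+1)^2=3R\equiv 0$, so that $2R-1\equiv-(R+1)$ is nilpotent and hence singular, is shorter, handles all $n\geq 1$ uniformly (including the Steiner case $n=1$), and avoids any case analysis on the parity of the power of $1+\zeta$; the Nakayama-type reduction you flag is indeed routine (and in fact you only need the easy direction: an automorphism of $M$ induces an automorphism of $M/3M$). One small point to repair before splicing this in: Proposition~\ref{Prop:PureMTS} as stated only gives ``$2R-1$ invertible $\Rightarrow$ pure,'' whereas you invoke the converse. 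That converse is immediate --- the computation in the proof of Proposition~\ref{Prop:PureMTS} is reversible, since $xy=yx$ holds in $\text{Lin}(M,R)$ if and only if $x(2R-1)=y(2R-1)$, so any nonzero $v\in\operatorname{Ker}(2R-1)$ commutes with $0$ (or with $-v$, as in the paper) --- but you should state this one line explicitly rather than attribute the biconditional to Proposition~\ref{Prop:PureMTS}.
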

\begin{proof}
By Lemma \ref{Lmm:PureDirProd}, it suffices to show that any MTS of the form \\$\text{Lin}(\mathbb{Z}[\zeta]/(1+\zeta)^n, R)$ is not pure.  Note $\mathbb{Z}[\zeta]/(1+\zeta)\cong \mathbb{Z}/_3,$ so if $n=1,$ the system is Steiner.     

\indent Henceforth, assume $n\geq 2$.  According to the discussion of mixed congruence in Section \ref{Sec:OddPow}, whether $n$ is even or odd, $R$ can be represented by a $2\times 2$ integral matrix; call it $A$.  By Lemma \ref{Lmm:CharPoly} ---dropping the second row of $A$ down a level of congruence if $n$ is odd--- there is some $k\geq1$ such that $\det(A)\equiv\text{Tr}(A)\equiv 1\mod 3^k$.  Consider $$P:=2A-I_2=\left(\begin{array}{cc}
    2a_{11}-1 & 2a_{12}  \\
    2a_{21} & 2a_{22}-1
\end{array}\right).$$       

\noindent We claim that $P$ is not invertible.  It suffices to show $\det(P)\equiv 0\mod 3$.  In fact,
\begin{align*}
    \det(P)&=4a_{11}a_{22}-2a_{11}-2a_{22}+1-4a_{12}a_{21}\\
    &=4(a_{11}a_{22}-a_{12}a_{21})-2(a_{11}+a_{22})+1\\
    &= 4\det(A)-2\text{Tr}(A)+1\\
    &\equiv 3 \mod 3^k\\
    &\equiv 0 \mod 3.
\end{align*}
With $P$ singular, we may choose some nonzero $$v\in \text{Ker} P=\{v\in M\mid vA+vA=v\}.$$  Since $2\nmid |M|,$ $v\neq -v$.  Moreover, $v(-v)=vA-v+vA=v-v=0=-v+v=(-v)A+v-vA=(-v)v$.  Conclude there exist distinct vectors commuting under the quasigroup operation given by $R$. 
\end{proof}

\begin{thm}\label{Th:PureMTSCharac}
Let $M$ be a finite abelian group of order $n$, and $\emph{Lin}(M, R)$ a linear MTS.  Then $\emph{Lin}(M, R)$ is pure if and only if $3\nmid n$.  That is, a linear MTS is pure if and only if it is non-ramified.   
\end{thm}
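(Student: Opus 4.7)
The strategy is to combine the direct product decomposition of linear MTS (Corollary \ref{Cor:ModDecompMTS}) with the two propositions just proved, using Lemma \ref{Lmm:PureDirProd} to pass purity through the product.

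For the direction $3\nmid n \Longrightarrow \text{pure}$, there is nothing new to do: this is precisely the content of Proposition \ref{Prop:PureMTS}, which says every DNR MTS is pure.

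For the direction $\text{pure} \Longrightarrow 3\nmid n$, I would argue by contrapositive. Suppose $3\mid n$. Apply Corollary \ref{Cor:ModDecompMTS} to write $\mathrm{Lin}(M,R)$ as a direct product of factors of the form $\mathrm{Lin}(\mathbb{Z}[\zeta]/(\pi_i^{n_i}), R_i)$. Consulting Theorem \ref{Th:EisQuos}, the underlying abelian group of such a factor has order $p^{n_i}$ when $\pi_i$ is an irrational prime with $\pi_i N = p\equiv 1\bmod 3$, order $p^{2n_i}$ when $\pi_i = p \equiv 2\bmod 3$ is rational, and a power of $3$ only in the ramified case $\pi_i = 1+\zeta$. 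Since $|M|$ is the product of the orders of the factors, the hypothesis $3\mid n$ forces at least one factor to be of the ramified type, namely $\mathrm{Lin}(\mathbb{Z}[\zeta]/(1+\zeta)^{k}, R')$ for some $k\geq 1$. Proposition \ref{Prop:ImpureMTS} tells us this factor is not pure (i.e., not anticommutative), and then Lemma \ref{Lmm:PureDirProd} propagates impurity to the whole product, showing $\mathrm{Lin}(M,R)$ is not pure.

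There is no real obstacle: both heavy lifts (the unit-computation ensuring $2R-I$ is invertible in the non-ramified factors, and the singularity of $2A-I_2$ modulo $3$) were dispatched in Propositions \ref{Prop:PureMTS} and \ref{Prop:ImpureMTS}. The only thing to check carefully is the bookkeeping in the contrapositive argument, specifically that divisibility of $n$ by $3$ indeed forces a ramified factor to appear; this follows immediately from the case analysis of Theorem \ref{Th:EisQuos}, since the non-ramified quotients have order coprime to $3$.
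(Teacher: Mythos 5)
Your proposal is correct and matches the paper's own argument: both decompose $\mathrm{Lin}(M,R)$ via Corollary \ref{Cor:ModDecompMTS}, use Lemma \ref{Lmm:PureDirProd} to pass purity through the product, and invoke Propositions \ref{Prop:PureMTS} and \ref{Prop:ImpureMTS} for the factor-level dichotomy. Your explicit bookkeeping that $3\mid n$ forces a ramified factor (via Theorem \ref{Th:EisQuos}) is just a spelled-out version of the paper's observation that $3\nmid|Q_i|$ for all $i$ if and only if $3\nmid n$.
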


\begin{proof}
Let $\text{Lin}(M, R)=\prod_i Q_i$ be the decomposition of Corollary \ref{Cor:ModDecompMTS}.  By Lemma \ref{Lmm:PureDirProd}, $\text{Lin}(M, R)$ is pure if and only if each $Q_i$ is anticommutative.  Propositions \ref{Prop:PureMTS} and \ref{Prop:ImpureMTS} reveal that a given factor $Q_i$ is anticommutative if and only if $3\nmid |Q_i|.$  Therefore, $\text{Lin}(M, R)$ is pure if and only if each $Q_i$ is anticommutative if and only if $3\nmid |Q_i|$ for all $i$ if and only if $3\nmid n$. 
\end{proof}

\newpage
\subsection{The Converse of Linear MTS}\label{Sec:SelfConvMTS}

\subsubsection{Self-Converse MTS}
Let $*:Q^2\to Q$ be a binary operation on a set $Q,$ and $\tau:(x, y)\mapsto(y, x)$ the \emph{twist} on $Q^2$.  We define the \emph{opposite} of $*$ to be the binary operation $\tau *:(x, y)\mapsto y*x$.  Given a quasigroup $(Q, \cdot, /, \backslash),$ its \emph{opposite quasigroup} is the quadruple $(Q, \circ, \backslash\!\backslash, /\!/)$, where $\circ,$ $\backslash\!\backslash$, and $/\!/$ denote the opposite operations of multiplication, left division, and right division respectively.  Recall that a Mendelsohn quasigroup has the form $(Q, \cdot, \circ, \circ)$, so its opposite will be $(Q, \circ, \cdot, \cdot)$.  This has a combinatorial interpretation.   

\begin{definition}\label{Def:CMTS}
Let $(Q, \mathcal{B})$ be an MTS.  The \emph{converse} of $(Q, \mathcal{B})$ is given by $(Q, \mathcal{B}^{-1})$, where $\mathcal{B}^{-1}=\{(b\phantom{.}a\phantom{.} ab)\mid (a\phantom{.} b\phantom{.} ab)\in\mathcal{B}\}$.   
\end{definition}

\begin{definition}\label{Def:S-CMTS}
Let $(Q, \mathcal{B})$ be an MTS with corresponding quasigroup $(Q, \cdot, \circ, \circ)$.  We say that $(Q, \mathcal{B})$ is \emph{self-converse} if  $(Q, \cdot, \circ, \circ)$ is isomorphic to its opposite $(Q, \circ, \cdot, \cdot)$.  A CML-linear MTS $\text{Lin}(M, R)$ is self-converse if and only if $\text{Lin}(M, R)\cong\text{Lin}(M, 1-R)=\text{Lin}(M, R^{-1})$.
\end{definition}

The term self-converse comes from graph theory.  The converse of a digraph consists of the same set of points, with arrows reversed.  In fact, in order to obtain the converse of an MTS, one may take the Cayley graph with respect to multiplication in the underlying quasigroup and reverse arrows.  The examination of self-converse MTS first appears in the literature with Colbourn and Rosa's survey on directed triple systems \cite{Colbourn}.  Just as in the case of pure systems, it was shown that the existence spectrum of self-converse MTS matches the existence spectrum of arbitrary MTS \cite{SCMTS}.  We characterize self-converse DNR MTS and conjecture a complete characterization of entropic self-converse MTS. 

\begin{lmm}\label{Lmm:DirProdSelcConv}
Let $\emph{Lin}(M_1, R_1)$ and $\emph{Lin}(M_2, R_2)$ be Mendelsohn quasigroups. The direct product $\prod_i\emph{Lin}(M_i, R_i)$ is self-converse if and only if each factor $\emph{Lin}(M_i, R_i)$ is self-converse.
\end{lmm}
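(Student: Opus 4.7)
The plan is to use Theorem~\ref{Th:K&N} to reformulate self-conversness as a conjugation condition: $\text{Lin}(M, R) \cong \text{Lin}(M, 1-R)$ holds if and only if there exists some $\psi \in \text{Aut}(M)$ with $\psi^{-1} R \psi = 1 - R$. Under the identification $\prod_i \text{Lin}(M_i, R_i) = \text{Lin}\bigl(\bigoplus_i M_i,\, \bigoplus_i R_i\bigr)$, together with the pointwise identity $1 - \bigoplus_i R_i = \bigoplus_i (1 - R_i)$, the lemma becomes a statement about whether a conjugating abelian-group automorphism of $\bigoplus_i M_i$ can be chosen to respect the direct sum decomposition.

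The sufficient ($\Leftarrow$) direction is direct: given conjugating automorphisms $\psi_i \in \text{Aut}(M_i)$ for each factor, the coordinatewise sum $\Psi = \bigoplus_i \psi_i$ conjugates $\bigoplus_i R_i$ to $\bigoplus_i (1 - R_i)$, and Theorem~\ref{Th:K&N} promotes this to the sought quasigroup isomorphism between the direct product and its converse.

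For the necessary ($\Rightarrow$) direction, my approach would be to refine each $(M_i, R_i)$ into its primary cyclic $\mathbb{Z}[\zeta]$-module components via Corollary~\ref{Cor:ModDecompMTS} and invoke the Krull--Schmidt theorem for finitely generated modules over the PID $\mathbb{Z}[\zeta]$. Passing to the converse corresponds to replacing the $\mathbb{Z}[\zeta]$-action by its conjugate, so that an indecomposable summand $\mathbb{Z}[\zeta]/(\pi^n)$ becomes $\mathbb{Z}[\zeta]/(\overline\pi^n)$ (cf.~Theorem~\ref{Th:EisQuos}). The uniqueness in Krull--Schmidt then forces the conjugating $\Psi$ to pair the indecomposable summands of $\bigoplus_i M_i$ bijectively with isomorphic summands of the conjugate module, and one argues that the pairing can be arranged to act factor-by-factor.

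I expect the main obstacle to be precisely this factor-by-factor realignment: an abelian-group conjugator $\Psi$ need not respect the decomposition into the $M_i$, and when distinct factors admit isomorphic but non-self-converse Eisenstein structures, a cross-factor swap can by itself produce a self-conversness isomorphism of the product. Closing the gap therefore seems to require either an additional hypothesis on the factors (for instance, pairwise coprime orders, so that Krull--Schmidt components of distinct $M_i$ can never be permuted with one another), or else a more delicate invariant extracted from the individual $\mathbb{Z}[\zeta]$-module structure on each $(M_i, R_i)$ that survives the passage from the product to its converse.
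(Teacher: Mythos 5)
Your sufficiency direction is exactly the paper's argument (extend the factor isomorphisms coordinatewise; equivalently conjugate $R_1\oplus R_2$ by $\psi_1\oplus\psi_2$ and invoke Theorem \ref{Th:K&N}), so that half is fine. For necessity, the obstacle you flag is not merely a difficulty with your Krull--Schmidt strategy: it is a counterexample to the statement itself, with one small correction to your phrasing --- the dangerous configuration is not two \emph{isomorphic} factors but one factor isomorphic to the \emph{converse} of another. Concretely, take $M_1=M_2=\mathbb{Z}/_7$, $R_1=3$, $R_2=5=1-R_1$. Neither $\text{Lin}(\mathbb{Z}/_7,3)$ nor $\text{Lin}(\mathbb{Z}/_7,5)$ is self-converse, since $\text{Aut}(\mathbb{Z}/_7)$ is abelian and $3\neq 5$ (this is Example \ref{Ex:NonisomAffQgps}). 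Yet the coordinate swap $\psi\colon(x,y)\mapsto(y,x)$ satisfies $\psi^{-1}(3\oplus 5)\psi=5\oplus 3=1-(3\oplus 5)$, so by Theorem \ref{Th:K&N} the product $\text{Lin}((\mathbb{Z}/_7)^2,3\oplus 5)$ \emph{is} self-converse. More generally, $Q\times Q^{\mathrm{op}}$ is self-converse for every Mendelsohn quasigroup $Q$, self-converse or not, via the swap.

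For comparison, the paper's proof of necessity restricts an isomorphism $f$ from the product onto its converse to the subquasigroup $0\times M_i$ and declares this a self-conversity isomorphism of the $i$-th factor; this tacitly assumes $f$ maps $0\times M_i$ into $0\times M_i$, which is exactly the step you predicted would fail, and in the example above every such $f$ must interchange the two factors. Your proposed repair is the right one: if $|M_1|$ and $|M_2|$ are coprime, the additive conjugator supplied by Theorem \ref{Th:K&N} preserves each component $0\times M_i$ (it is the set of elements annihilated by $|M_i|$), and restriction then does yield self-conversity of each factor, so the lemma is true with that added hypothesis. Note, however, that this does not fully salvage the downstream use: Theorem \ref{Th:Self-ConverseCharac} applies the lemma to the decomposition \eqref{Eq:DirectSumDecomp}, whose factors can share a prime, and the order-$49$ example above is a DNR MTS with $7\equiv 1\bmod 3$ that is self-converse; so the necessity half of that theorem needs the finer invariant you allude to (e.g.\ the multiset of primary cyclic $\mathbb{Z}[\zeta]$-summands with their attached roots, compared against the multiset of conjugate roots), not just factorwise self-conversity.
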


\begin{proof}
Begin by noticing $\prod_i\text{Lin}(M_i, R^{\pm 1}_i)\cong \text{Lin}(M_1\times M_2, R_1^{\pm 1}\times R^{\pm 1}_2)$.  We can extend isomorphisms $f_i:\text{Lin}(M_i, R_i)\to \text{Lin}(M_i, R_i^{-1})$ uniquely to the product $f_1\times f_2:\prod_i\text{Lin}(M_i, R_i)\to\prod_i\text{Lin}(M_i, R^{-1}_i)$; this proves sufficiency.  Conversely, we can restrict an isomorphism $f:\prod_i\text{Lin}(M_i, R_i)\to\prod_i\text{Lin}(M_i, R_i^{-1})$ to isomorphisms $f\!\mid_{0\times M_i}:\text{Lin}(M_i, R_i)\to\text{Lin}(M_i, R^{-1}_i),$ verifying necessity.
\end{proof}

\begin{thm}\label{Th:Self-ConverseCharac}
Let $M$ be a finite abelian group of order $n$, and $\emph{Lin}(M, R)$ a DNR MTS.  Then $\emph{Lin}(M, R)$ is self-converse if and only if for each prime $p\mid n,$ we have $p\equiv 2\mod 3.$ 
\end{thm}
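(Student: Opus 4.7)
The plan is to exploit the direct-product decomposition afforded by Theorem \ref{Th:DirectSumDecomp} to reduce self-conversity to an analysis of atomic factors. Concretely, I would write
\begin{equation*}
\text{Lin}(M, R) \cong \prod_{i,j} \text{Lin}(\mathbb{Z}/_{p_i^{t_j}}, a_j) \;\times\; \prod_{i,j} \text{Lin}(\mathbb{Z}/_{q_i^{u_j}}[\zeta]),
\end{equation*}
where the $p_i \equiv 1 \mod 3$ and $q_i \equiv 2 \mod 3$ run over the prime divisors of $n$. By Lemma \ref{Lmm:DirProdSelcConv}, self-conversity of the product is equivalent to self-conversity of each atomic factor, so the problem splits along prime-class lines.

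For the ``if'' direction, I would show that each $\text{Lin}(\mathbb{Z}/_{q^u}[\zeta])$ with $q \equiv 2 \mod 3$ is self-converse. Lemma \ref{Lmm:rootschar3}(a) gives that $R^{-1}$ is again a root of $f(X)$, so $\text{Lin}(M, R^{-1}) = \text{Lin}(M, 1-R)$ is a linear MTS on the same underlying abelian group $(\mathbb{Z}/_{q^u})^2$. Since Proposition \ref{Prop:twomodthreeclassif} (cf.~Remark \ref{Rmk:TwoMod3Rep}) establishes a unique isomorphism class of linear MTS on this group, $\text{Lin}(M, R)$ and $\text{Lin}(M, R^{-1})$ are forced to be isomorphic.

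For the ``only if'' direction, I would show that no atomic factor $\text{Lin}(\mathbb{Z}/_{p^t}, a)$ with $p \equiv 1 \mod 3$ is self-converse. Here the opposite quasigroup is $\text{Lin}(\mathbb{Z}/_{p^t}, 1-a) = \text{Lin}(\mathbb{Z}/_{p^t}, a^{-1})$, corresponding to the other root of $f(X)$ modulo $p^t$ (Proposition \ref{Prop:onemodthreeclassif}). Theorem \ref{Th:K&N} characterizes MTS-isomorphism between these two structures as the existence of some $\psi \in \text{Aut}(\mathbb{Z}/_{p^t})$ with $\psi^{-1} a \psi = a^{-1}$; since $\text{Aut}(\mathbb{Z}/_{p^t})$ is cyclic, hence abelian, this condition collapses to $a = a^{-1}$. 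Lemma \ref{Lmm:rootschar3}(b), applied to $S = \mathbb{Z}/_{p^t}$ with characteristic $p \neq 3$, rules out $a = a^{-1}$; hence $\text{Lin}(\mathbb{Z}/_{p^t}, a) \not\cong \text{Lin}(\mathbb{Z}/_{p^t}, a^{-1})$.

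Putting the two sub-cases together, $\text{Lin}(M, R)$ is self-converse iff no $p \equiv 1 \mod 3$ factor appears in its atomic decomposition, i.e., iff every prime dividing $n$ is congruent to $2$ modulo $3$. The main obstacle is the $p \equiv 1 \mod 3$ case: one has to combine Theorem \ref{Th:K&N} with the abelian nature of $\text{Aut}(\mathbb{Z}/_{p^t})$ to reduce MTS-isomorphism to the scalar equation $a = a^{-1}$, and then invoke the characteristic-sensitive Lemma \ref{Lmm:rootschar3}(b) to defeat that equation.
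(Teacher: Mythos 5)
Your proposal is correct and takes essentially the same route as the paper: decompose via Theorem \ref{Th:DirectSumDecomp}, reduce self-conversity to the factors by Lemma \ref{Lmm:DirProdSelcConv}, and settle the factors with Propositions \ref{Prop:onemodthreeclassif} and \ref{Prop:twomodthreeclassif}. The only difference is that you explicitly carry out the Kepka--N\v{e}mec argument (commutativity of $\text{Aut}(\mathbb{Z}/_{p^t})$ plus Lemma \ref{Lmm:rootschar3}(b)) showing $\text{Lin}(\mathbb{Z}/_{p^t}, a)\ncong\text{Lin}(\mathbb{Z}/_{p^t}, a^{-1})$ for $p\equiv 1\bmod 3$, a step the paper leaves implicit (cf.\ Example \ref{Ex:NonisomAffQgps}).
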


\begin{proof}
The MTS $\text{Lin}(M, R)$ is self-converse if and only if each factor in its direct product decomposition \eqref{Eq:DirectSumDecomp} is self-converse.  Propositions \ref{Prop:onemodthreeclassif} and \ref{Prop:twomodthreeclassif} reveal that this is possible if and only if each factor is of the form $\text{Lin}(\mathbb{Z}/_{p^n}[\zeta])$ for some $p\equiv 2\mod 3.$
\end{proof}

\indent Conjecture \ref{Conj:ClassifOddPow3} implies the following:

\begin{conj}\label{Self-Conv}
Let $M$ be a finite abelian group, and $\emph{Lin}(M, R)$ an MTS.  Then $\emph{Lin}(M, R)$ is self-converse if and only if no prime dividing the order of $M$ is congruent to $1$ modulo $3$.
\end{conj}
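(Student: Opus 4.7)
The plan is to reduce the question to the Eisenstein primary factors via Corollary \ref{Cor:ModDecompMTS} and Lemma \ref{Lmm:DirProdSelcConv}, and then analyze self-conversion for each type of factor separately. The forward direction will be handled using Kepka--N\v{e}mec to rule out self-conversion of the $p\equiv 1\mod 3$ factors, and the reverse direction will be handled by uniqueness of the MTS isomorphism class on each Eisenstein quotient in the remaining types.

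For the forward implication I would argue the contrapositive. Suppose some prime $p\mid |M|$ satisfies $p\equiv 1\mod 3$. Then the decomposition of Corollary \ref{Cor:ModDecompMTS} contains at least one factor over an irrational prime $\pi$ above $p$, which by Theorem \ref{Th:EisQuos}($\textrm a$) and Proposition \ref{Prop:onemodthreeclassif} takes the form $\text{Lin}(\mathbb{Z}/_{p^n}, a)$ for some root $a$ of $f(X)$ modulo $p^n$. Its converse is $\text{Lin}(\mathbb{Z}/_{p^n}, 1-a)=\text{Lin}(\mathbb{Z}/_{p^n}, a^{-1})$. Because $\text{Aut}(\mathbb{Z}/_{p^n})$ is cyclic, Theorem \ref{Th:K&N} reduces the putative isomorphism to the equality $a=a^{-1}$; combined with $f(a)=0$ this forces $a=2$, and Lemma \ref{Lmm:rootschar3}($\textrm b$) then yields $p^n\mid 3$, contradicting $p\equiv 1\mod 3$. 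By Lemma \ref{Lmm:DirProdSelcConv} the failure of self-conversion on this single factor propagates to $\text{Lin}(M, R)$.

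For the reverse implication, suppose no prime divisor of $|M|$ is congruent to $1$ modulo $3$. Every factor in the decomposition then lies above either a rational prime $p\equiv 2\mod 3$ or the ramified prime $1+\zeta$. Factors of the first kind are forced into the unique isomorphism class $\text{Lin}(\mathbb{Z}/_{p^n}[\zeta])$ by Proposition \ref{Prop:twomodthreeclassif}, so the converse coincides with the original. For the ramified factors I would split by the exponent of $1+\zeta$: even powers $(1+\zeta)^{2k}$ with $k\geq 2$ are uniquely classified by Proposition \ref{Prop:EvenPower3Classif}; for $k=1$, Proposition \ref{Prop:ThreeSquClassif} leaves two classes, $-I_2$ and the companion $T$, each of which is self-converse (the former because $1-(-I_2)=-I_2$ over $\mathbb{Z}/_3$, the latter because $1-T$ satisfies $f$ but is not scalar, so it must lie in the $T$-class). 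The small odd powers with $k\leq 2$ are covered by Proposition \ref{Prop:SmallOrderOddPowerClassif}. For the remaining odd powers $k\geq 3$ I would invoke Conjecture \ref{Conj:ClassifOddPow3}, which asserts uniqueness of the MTS structure on $\mathbb{Z}[\zeta]/(1+\zeta)^{2k+1}$; uniqueness of the class on each factor then forces self-conversion, and Lemma \ref{Lmm:DirProdSelcConv} assembles these into self-conversion of $\text{Lin}(M, R)$.

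The principal obstacle is clearly the ramified odd-power case, which is precisely the content of Conjecture \ref{Conj:ClassifOddPow3}. Making the result unconditional would require either completing the classification of $f(X)$-annihilated automorphisms of $\mathbb{Z}/_{3^k}\oplus \mathbb{Z}/_{3^{k+1}}$ (showing they all lie in the class $\mathcal{C}$ of \eqref{Eq:LiftingClass}, continuing the pattern of Example \ref{Ex:MixedCongLift}) or bypassing it by producing an explicit isomorphism $\text{Lin}(\mathbb{Z}[\zeta]/(1+\zeta)^{2k+1}, R)\cong \text{Lin}(\mathbb{Z}[\zeta]/(1+\zeta)^{2k+1}, 1-R)$ without first establishing uniqueness. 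A natural candidate for such an isomorphism is complex conjugation, which descends to an involution on $\mathbb{Z}[\zeta]/(1+\zeta)^{2k+1}$ (since $(1+\zeta)$ and its conjugate generate the same maximal ideal above $3$) and which intertwines the $\zeta$-action with the $\overline{\zeta}=1-\zeta=\zeta^{-1}$ action; developing this into a quasigroup isomorphism is the step where the hardest work should occur.
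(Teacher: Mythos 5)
Your plan follows exactly the derivation the paper intends: the statement appears only as a conjecture, asserted to follow from Conjecture \ref{Conj:ClassifOddPow3}, and your route (decompose via Corollary \ref{Cor:ModDecompMTS}, settle each primary factor by Propositions \ref{Prop:onemodthreeclassif}, \ref{Prop:twomodthreeclassif}, \ref{Prop:ThreeSquClassif}, \ref{Prop:EvenPower3Classif}, \ref{Prop:SmallOrderOddPowerClassif}, and invoke Conjecture \ref{Conj:ClassifOddPow3} for the remaining odd ramified powers) is the same one, so the conditionality you flag is not in itself a defect; your suggestion that complex conjugation intertwines the $\zeta$- and $\zeta^{-1}$-actions on the ramified quotients is also a reasonable way to attack those factors directly.

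The genuine gap is in your forward direction, at the step ``failure of self-conversion on a single factor propagates to the product by Lemma \ref{Lmm:DirProdSelcConv}.'' An isomorphism from a direct product onto its converse need not respect the factors, and the necessity half of that lemma (restricting $f$ to $0\times M_i$) breaks down precisely when the converse permutes the factors; so you cannot conclude from one non-self-converse factor that the product is not self-converse. Concretely, take $Q=\text{Lin}(\mathbb{Z}/_7,3)\times\text{Lin}(\mathbb{Z}/_7,5)=\text{Lin}((\mathbb{Z}/_7)^2,R)$ with $R=\text{diag}(3,5)$: its converse is $\text{Lin}((\mathbb{Z}/_7)^2,\text{diag}(5,3))$, and the coordinate swap $\psi$ satisfies $\psi^{-1}R\psi=\text{diag}(5,3)=R^{-1}$, so by Theorem \ref{Th:K&N} the system of order $49$ is self-converse, even though neither factor is (Example \ref{Ex:NonisomAffQgps}) and $7\equiv 1\bmod 3$ divides the order. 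Thus the ``only if'' implication cannot be obtained by your argument, and in fact this example shows the biconditional fails as stated (the same issue affects the necessity direction of Lemma \ref{Lmm:DirProdSelcConv} and of Theorem \ref{Th:Self-ConverseCharac}). The correct criterion sits at the module level: by Theorem \ref{Th:K&N}, $\text{Lin}(M,R)\cong\text{Lin}(M,R^{-1})$ if and only if $(M,R)\cong(M,R^{-1})$ as $\mathbb{Z}[\zeta]$-modules, i.e.\ if and only if the multiset of elementary divisors $\pi_i^{n_i}$ in \eqref{Eq:StructThmfgEisMods} is invariant under complex conjugation; this holds automatically when no prime $p\equiv 1\bmod 3$ divides $|M|$ (conjugation fixes inert and ramified primes up to units), but it also holds for balanced products of split-prime factors such as the order-$49$ example, so the conjectured condition is sufficient but not necessary.
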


\subsubsection{Self-Orthogonal MTS}

\begin{definition}\label{Def:OrthLSqu}
Two quasigroups on the same set of symbols $(Q, \cdot)$ and $(Q, *)$ are \emph{orthogonal} if for every $a, b\in Q$ the system of equations 
\begin{align*}
    x\cdot y&=a\\
    x*y&=b
\end{align*}
as a unique solution.  A quasigroup $Q$ is \emph{self-orthogonal} if it is orthogonal to its opposite.
\end{definition}

\begin{lmm}\label{Lmm:NSelfOrth}
Let $(Q, \cdot, /, \backslash)$ be a quasigroup with distinct, commuting elements.  Then $(Q, \cdot, /, \backslash)$ is not self-orthogonal.
\end{lmm}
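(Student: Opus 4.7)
The plan is to produce two distinct solutions to the orthogonality system and thereby violate uniqueness. Suppose $a, b \in Q$ are distinct with $a \cdot b = b \cdot a$, and set $c := a \cdot b = b \cdot a$. Let $\circ$ denote the opposite multiplication, so $x \circ y = y \cdot x$.

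Consider the system
\begin{align*}
x \cdot y &= c,\\
x \circ y &= c.
\end{align*}
The ordered pair $(x, y) = (a, b)$ solves it, since $a \cdot b = c$ and $a \circ b = b \cdot a = c$. But $(x, y) = (b, a)$ is also a solution, since $b \cdot a = c$ and $b \circ a = a \cdot b = c$. Because $a \neq b$, these solutions are distinct, so $(Q, \cdot)$ and $(Q, \circ)$ are not orthogonal, i.e., $(Q, \cdot, /, \backslash)$ is not self-orthogonal.

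There is no real obstacle; the lemma is essentially immediate from the definitions once one recognizes that a nontrivial commuting pair forces the ``twisted'' system $x \cdot y = x \circ y = c$ to have at least two solutions obtained by swapping coordinates. I would present it in a single short paragraph.
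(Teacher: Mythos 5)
Your proof is correct and is essentially identical to the paper's: the paper likewise takes distinct commuting elements, sets their common product as the right-hand side of both equations, and notes that the pair and its swap give two distinct solutions. Nothing further is needed.
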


\begin{proof}
Choose distinct, commuting elements $q, q^\prime\in Q,$ and fix $a=q\cdot q^\prime=q^\prime \cdot q.$  With respect to the system of equations
\begin{align*}
    x\cdot y&=a\\
    x\circ y&=a,
\end{align*}
the ordered pairs $(q, q^\prime), (q^\prime, q)$ constitute distinct solutions.
\end{proof}

The above lemma demonstrates that self-orthogonal quasigroups are necessarily anticommutative.  In general, the converse does not hold.  However, Di Paola and Nemeth showed that any MTS linear over a field which is pure (anticommutative) is necessarily self-orthogonal \cite[Th.~7]{Aczel}.  We prove that their result extends to quotients of $\mathbb{Z}[\zeta]$ by primary ideals.

\begin{thm}\label{Th:SelfOrthCharac}
Let $M$ be a finite abelian group, and $\emph{Lin}(M ,R)$ a linear MTS.  Then $\emph{Lin}(M, R)$ is self-orthogonal if and only if it is non-ramified if and only if it is pure.
\end{thm}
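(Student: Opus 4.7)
The plan is to reduce both purity and self-orthogonality to the same invertibility condition on the group endomorphism $2R - 1$ of $M$. Since Theorem \ref{Th:PureMTSCharac} already identifies purity with non-ramification, it suffices to prove that self-orthogonality is also equivalent to purity.

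The forward direction, self-orthogonal $\Rightarrow$ pure, is immediate from Lemma \ref{Lmm:NSelfOrth}, which states that any self-orthogonal quasigroup is anticommutative.

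For the converse, I would expand the defining system $xy = a$, $yx = b$ using $xy = xR + y(1-R)$ and $yx = x(1-R) + yR$. Self-orthogonality is then the assertion that the group endomorphism
\[
\phi : M \oplus M \to M \oplus M, \qquad (x,y) \mapsto \bigl(xR + y(1-R),\; x(1-R) + yR\bigr)
\]
is a bijection, which, since $M$ is finite, reduces to injectivity. Adding the two coordinate equations of $\phi(x,y) = 0$ gives $x + y = 0$; substituting $y = -x$ back into either equation yields $x(2R-1) = 0$. Thus $\ker \phi = \{(x,-x) : x \in \ker(2R-1)\}$, so $\phi$ is injective precisely when $2R - 1$ is an automorphism of $M$.

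It remains to observe that the same condition characterizes purity: two elements $v, w \in M$ commute in $\text{Lin}(M,R)$ iff $(v-w)(2R-1)=0$, so anticommutativity is equivalent to $2R - 1$ being injective on $M$. The proofs of Propositions \ref{Prop:PureMTS} and \ref{Prop:ImpureMTS} already show that this occurs exactly when $\text{Lin}(M,R)$ is non-ramified. Chaining these equivalences closes the loop: self-orthogonal $\iff$ $2R-1$ is an automorphism $\iff$ pure $\iff$ non-ramified. I do not foresee a genuine obstacle; the whole of the new work is the kernel calculation identifying $\phi$-injectivity with $2R-1$-invertibility, after which everything needed is already established in the excerpt.
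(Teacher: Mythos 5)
Your proposal is correct, but it takes a genuinely different route from the paper's. The paper first reduces self-orthogonality to the factors of the direct product decomposition (by the same quasivariety argument as in Lemma \ref{Lmm:PureDirProd}), obtains the ramified direction from Proposition \ref{Prop:ImpureMTS} together with Lemma \ref{Lmm:NSelfOrth}, and then verifies self-orthogonality of the two non-ramified standard factors $\text{Lin}(\mathbb{Z}/_{p^n}, a)$ and $\text{Lin}(\mathbb{Z}/_{p^n}[\zeta])$ separately, deriving $x_1-x_2=(x_1-x_2)R^2$ and finishing with explicit arithmetic modulo $p^n$ in the first case and with the companion matrix in the second. You instead package the orthogonality system as the additive endomorphism $(x,y)\mapsto(xy, yx)$ of $M\oplus M$, compute its kernel once and for all to be $\{(x,-x) : x(2R-1)=0\}$, and note that $2R-1$ is exactly the operator whose injectivity characterizes anticommutativity (the operator already exploited in Proposition \ref{Prop:PureMTS}); finiteness of $M$ converts injectivity into bijectivity. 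This yields the sharper intermediate fact that for any linear MTS over a finite abelian group, self-orthogonality is equivalent to purity, with no decomposition and no case analysis, after which Theorem \ref{Th:PureMTSCharac} supplies pure $\iff$ non-ramified, exactly as you say. One small inaccuracy: you credit the proofs of Propositions \ref{Prop:PureMTS} and \ref{Prop:ImpureMTS} with showing that invertibility of $2R-1$ holds exactly in the non-ramified case, whereas those proofs only treat the individual factors of the decomposition; but this is not needed, since your own kernel computation gives pure $\iff$ $2R-1$ injective, and Theorem \ref{Th:PureMTSCharac} closes the loop. What the paper's longer verification buys is an explicit extension of the Di Paola--Nemeth field computation to the quotients $\mathbb{Z}[\zeta]/(\pi^n)$; what yours buys is brevity and the structural insight that the purity/self-orthogonality equivalence is a formal consequence of linearity alone.
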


\begin{proof}
\indent Note that a quasigroup is self orthogonal if and only if $x_1y_1=x_2y_2$ and $y_1x_1=y_2x_2 \implies x_1=x_2\text{ and }y_1=y_2$.  With this characterization, one may prove ---employing a quasivariety argument identical to that of the proof of Lemma \ref{Lmm:PureDirProd}--- that the direct product of quasigroups is self-orthogonal if and only if each factor is self-orthogonal.  Thus, the fact that ramified linear MTS are not self-orthogonal follows directly from Proposition \ref{Prop:ImpureMTS} and Lemma \ref{Lmm:NSelfOrth}.  This proves that all self-orthogonal linear MTS are non-ramified.

\indent Next, to prove the converse, we have to establish self-orthogonality of $\text{Lin}(\mathbb{Z}/_{p^n}, a)$ whenever $p\equiv 1\mod 3$, and $\text{Lin}(\mathbb{Z}/_{p^n}[\zeta])$ whenever $p\equiv 2\mod 3$.  But first, we make a few observations regarding an arbitrary linear MTS $\text{Lin}(M, R)$.  Suppose $x_1, x_2, y_1, y_2\in M$ so that $x_1y_1=x_2y_2$ and $y_1x_1=y_2x_2$.  Recall $1-R=R^{-1}=-R^2$.  Then by hypothesis, 
\begin{equation}\label{Eq:SelfOrth1}
  x_1R-y_1R^2=x_2R-y_2R^2,
\end{equation}
and
\begin{equation}
  y_1R-x_1R^2=y_2R-x_2R^2\label{Eq:SelfOrth2}.
\end{equation}
Now multiply \eqref{Eq:SelfOrth1} and \eqref{Eq:SelfOrth2} by $R^{-1}$ and rearrange terms in order to obtain
\begin{equation}\label{Eq:SelfOrth3}
  x_1-x_2=(y_1-y_2)R,
\end{equation}
and
\begin{equation}
  y_1-y_2=(x_1-x_2)R\label{Eq:SelfOrth4}.
\end{equation}
Substituting the right-hand side of \eqref{Eq:SelfOrth4} in for $y_1-y_2$ in \eqref{Eq:SelfOrth3} yields
\begin{equation}\label{Eq:SelfOrth5}
x_1-x_2=(x_1-x_2)R^2.   
\end{equation}
Similarly,
\begin{equation}\label{Eq:SelfOrth6}
y_1-y_2=(y_1-y_2)R^2.
\end{equation}

\indent Now, assume we are working in $\text{Lin}(\mathbb{Z}/_{p^n}, a)$.  If $x_1-x_2\neq0,$ in order for \eqref{Eq:SelfOrth5} to hold, $a^2-1$ must be a zero divisor in $\mathbb{Z}/_{p^n}$, so $p\mid a^2-1$.  Thus, $a$ is a square root of $1$ modulo $p$, and if this is the case, then $a\equiv \pm 1 \mod p$.  We also have $a^2-a+1\equiv0\mod p$.  Clearly then, $a$ cannot be congruent to $1$, so it must be congruent to $-1$ modulo $p$.  However, $f(-1)\equiv 0\mod p$ if and only if $p=3$ by Lemma \ref{Lmm:rootschar3}.(b), contradicting our assumption that $p\equiv 1\mod 3$.  We are left to conclude $x_1=x_2$; similarly $y_1=y_2$.

\indent  The case of $\text{Lin}(\mathbb{Z}/_{p^n}[\zeta])$ is more concrete.  By Proposition \ref{Prop:twomodthreeclassif}, we can choose $R$ so that $R^2$ is represented by $$\left(\begin{array}{cc}
   -1  & -1 \\
    1 & 0
\end{array}\right).$$  Suppose $x_1-x_2=(a, b)\neq 0.$  That is, 
\begin{align*}
    (a, b)&=(a, b)R^2\\
    &=(b-a, -a).
\end{align*}
Then $b=-a=2a$, so that $0=3a$.  Since, $p$ does not divide $3,$ it is invertible in $\mathbb{Z}/_{p^n}$.  Conclude $a=-b=0$, a contradiction.  Thus, $x_1-x_2=0=y_1-y_2$.

\indent We have thus proven the equivalence of linear MTS being non-ramfied and self-orthogonal.  The equivalence of the former with purity was established in Theorem \ref{Th:PureMTSCharac}.
\end{proof}

\section{Reverse-Linearization}\label{Sec:RL}
We return to the alternative presentation $\mathbb{Z}[X]/(X^2+X+1)$ for the Eisenstein integers.  Since $X^2-X+1$ arises when constructing CML-linear quasigroups that satisfy idempotence \eqref{Eq:Idempotence} and semisymmetry \eqref{Eq:SemiSymm}, it is only natural to ask if there is a quasigroup variety whose linear theory rests on $X^2+X+1$.  Forcing a CML-linear quasigroup to be semisymmetric requires $L=R^{-1}$, and $R^3+1=0$.  Further imposing idempotence leads to $R^2-R+1=0$.  To produce $R^2+R+1=0,$ we would like the automorphism to be not order $6,$ but rather order $3,$ as $R^3-1=(R-1)(R^2+R+1),$ whence the following definition.       

\begin{definition}\label{Def:EisQgps}
The variety of \emph{right Eisenstein quasigroups}, denoted $\mathbf{RE}$, is generated by \eqref{IL}-\eqref{SR} and
\begin{align}
    (yx\cdot x)x&=y. \label{Eq:Order3RDiv}
\end{align}
The variety $\mathbf{LE}$ of \emph{left Eisenstein quasigroups} is defined by \eqref{IL}-\eqref{SR}, and 
\begin{align}
    x(x\cdot xy)=y. \label{Eq:Order3LDiv}
\end{align}
\end{definition}

\begin{prop}\label{Prop:EisQgps}
Suppose $\emph{Lin}(M, R, L)$ is a CML-linear pique.  Then this structure belongs to $\mathbf{RE}$ if and only if $R^2+R+1=0$, and $\emph{Lin}(M, R, L)$ belongs to $\mathbf{LE}$ if and only if $L^2+L+1=0$.
\end{prop}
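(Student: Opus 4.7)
My plan is to prove the statement for $\mathbf{RE}$ by direct computation in the CML-linear pique; the case of $\mathbf{LE}$ is entirely parallel with the roles of $R$ and $L$ interchanged. First I would unfold the iterated product $((yx)x)x$ using the recursive definition $zw = zR + wL$ and the fact that $R$, being a CML-automorphism, distributes over the CML operation. The forced parenthesization is
$$
((yx)x)x \;=\; \bigl((yR^3 + xLR^2) + xLR\bigr) + xL.
$$

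For necessity, I would specialize the identity $((yx)x)x = y$ in two ways. Setting $x = 0$ gives $yR^3 = y$ for every $y$, hence $R^3 = 1$. Setting $y = 0$ reduces the equation to $(xLR^2 + xLR) + xL = 0$ for every $x \in M$. The three summands lie in the subloop $\langle xL, xLR\rangle$, which is a group by the diassociativity of CMLs, so parenthesization is immaterial. Substituting the bijective change of variable $u = xL$ (allowed since $L$ is an automorphism) then gives $u + uR + uR^2 = 0$ for all $u \in M$, the linear-pique interpretation of $R^2 + R + 1 = 0$.

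For sufficiency, assume $R^2 + R + 1 = 0$ in the sense that $u + uR + uR^2 = 0$ for every $u \in M$. Applying $R$ and rearranging inside the diassociative subloop $\langle u, uR \rangle$ yields $uR^3 = u$, so $R^3 = 1$. Nuclearity of $R$ supplies $u + uR \in N(M)$, and combined with the vanishing sum this forces $uR^2 = -(u + uR) \in N(M)$; since $R^2$ is an automorphism of $M$, it follows that $M = N(M)$, i.e., the CML is in fact an abelian group. In this abelian setting, the expansion above collapses via ordinary linearity to $yR^3 + xL(R^2 + R + 1) = y$, confirming $((yx)x)x = y$.

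The main subtlety throughout is the non-associativity of a general CML, which demands that every parenthesis shift be justified. Two standard tools resolve the difficulty: diassociativity, which handles sums lying inside the $R$-orbit of a single element; and nuclearity of $R$, which not only lets the external variable $y$ be moved past such sums, but also -- perhaps pleasingly -- collapses the sufficiency direction entirely to the abelian-group setting. I anticipate no deeper obstacle beyond confirming that each parenthesis manipulation appeals to one of these two standard facts about commutative Moufang loops.
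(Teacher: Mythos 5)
Your argument is, at its core, the same as the paper's: expand $(yx\cdot x)x$ as $((yR^3+xLR^2)+xLR)+xL$, specialize $y=0$ for necessity, and use $R^3=1$ (a consequence of $R^2+R+1=0$) for sufficiency, with $\mathbf{LE}$ handled by interchanging $R$ and $L$. The paper simply writes the linearization without brackets and is silent on nonassociativity; your additional observation that nuclearity of $R$ gives $uR^2=-(u+uR)\in N(M)$, so surjectivity of $R^2$ forces $M=N(M)$ and hence $M$ abelian, is correct and is in fact the cleanest way to make the sufficiency direction rigorous. The same trick also tidies necessity: applying nuclearity to $uR$ gives $uR+uR^2\in N(M)$, so $(uR^2+uR)+u=0$ already yields $u\in N(M)$ for every $u$, and again $M$ collapses to an abelian group before any rebracketing is needed.

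The one justification that does not stand as written is your appeal to diassociativity. The summands $xL$, $xLR$, $xLR^2$ need not all lie in the subloop $\langle xL, xLR\rangle$: diassociativity covers subloops generated by two elements, and nothing forces this subloop to be invariant under $R$, so $xLR^2$ may fall outside it. The same objection applies to ``rearranging inside $\langle u,uR\rangle$'' to obtain $uR^3=u$, since $uR^2$ and $uR^3$ are not known to lie in that subloop. Replace these appeals either by simply retaining the left-nested bracketing that the $y=0$ specialization actually produces, or by the nuclearity argument you already deploy: once $u+uR$ (equivalently $uR+uR^2$) is known to be nuclear, the relevant three-term sums can be handled, or better, one concludes at once that $M$ is an abelian group, after which all bracket manipulations are free and $R^3=1$ is immediate. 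With that repair your proof is complete and, on the nonassociativity point, more careful than the paper's own.
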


\begin{proof}
The linearization of $(yx\cdot x)x$ is 
\begin{equation}\label{Eq:LinOrder3Div}
    yR^3+xLR^2+xLR+xL=yR^3+xL(R^2+R+1).
\end{equation}
If \eqref{Eq:Order3RDiv} holds in $\text{Lin}(M, R, L)$, then substituting $y=0$ means that $0=xL(R^2+R+1)$ for all $x\in M$.  Since $L$ is an automorphism, $R^2+R+1=0$.  Conversely, if we take a CML-linear pique $\text{Lin}(M, R, L)$, and we assume $R^2+R+1=0,$ then $R^3-1=(R-1)(R^2+R+1)=0$, so we find that $yR^3+xL(R^2+R+1)=yR^3=y.$  Proof of the characterization for CML-linear piques in $\mathbf{LE}$ follows by an identical argument; just switch the roles of $L$ and $R$. 
\end{proof}

\begin{remark}
Although \eqref{Eq:Order3RDiv} guarantees $R^2+R+1=0$ in a CML-linear pique $\text{Lin}(M, R, L)$, it says nothing of $L$.  Hence, a Theorem \ref{Th:CatRelat}-type result for $\mathbb{Z}\otimes\mathbf{RE}$ would relate $\mathbf{RE}$ not to the Eisenstein integers, but rather to \\ $\mathbb{Z}[\zeta]\otimes_{\mathbb{Z}}\mathbb{Z}[L, L^{-1}].$  A search for conditions on quasigroup words furnishing some $p(R)\in \mathbb{Z}[R, R^{-1}]$ so that $L=p(R)$ --and thus producing a functor $\mathbb{Z}[\zeta]$-$\mathbf{Mod} \to \mathbb{Z}\otimes\mathbf{V}$, where $\mathbf{V}$ is a subvariety of $\mathbf{RE}$-- is reserved for future research.   
\end{remark}

Our final result involves the notion of \emph{quasigroup homotopy}.  Let $(Q, \cdot)$ and $(Q^\prime, *)$ be quasigroups.  A triple of functions $(f_1, f_2, f_3)$ mapping $Q$ into $Q^\prime$ is a homotopy if for all $x, y\in Q,$ $(x)f_1*(y)f_2=(x\cdot y)f_3$.  If each of the $f_i$ are bijections, then $(f_1, f_2, f_3)$ is an \emph{isotopy}.  An isotopy of the form $(f_1, f_2, 1_Q):(Q, \cdot)\to (Q, *)$ is \emph{principal} \cite{Sade}.  Homotopy is a vital aspect of nonassociative geometry \cite{Sabinin}.  Specifically, every $3$-web is coordinatized by $\mathbf{Qtp}(\top, Q)$, the set of quasigroup homotopies from a singleton into some quasigroup $Q$.  Isotopes of $Q$ yield different coordinatizations of the same $3$-web (cf. \cite[Th.~1]{QHSRA}).    

\begin{thm}\label{Th:MTSIsotopes}
Every distributive Mendelsohn quasigroup is principally isotopic to a left Eisenstein quasigroup.
\end{thm}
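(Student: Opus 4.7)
The plan is to invoke Belousov's theorem (Theorem~\ref{Th:B&S}) to represent any nonempty distributive Mendelsohn quasigroup as $(M, \cdot) = \text{Lin}(M, R)$, with $M$ a commutative Moufang loop and $R$ a nuclear CML-automorphism satisfying $R^2 - R + 1 = 0$. The pivotal algebraic observation is that this same relation rewrites as $(-R)^2 + (-R) + 1 = 0$, so the automorphism $S := -R$ is exactly the kind of cube-root-of-unity called for by Proposition~\ref{Prop:EisQgps} to produce a member of $\mathbf{LE}$.

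With $S$ at hand, I would define a new binary operation on $M$ by
\[
x * y := x + yS = x - yR,
\]
quickly verify that $(M, *)$ is a quasigroup (since $S$ is a bijection, both divisions are uniquely solvable), and exhibit the principal isotopy $(R, R, 1_M) : (M, \cdot) \to (M, *)$. The intertwining calculation is then routine: $(xR) * (yR) = xR + (yR)S = xR - yR^2 = xR + y(1 - R) = x \cdot y$, where the penultimate equality uses the immediate consequence $-R^2 = 1 - R = R^{-1}$ of the defining relation for $R$.

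It remains to confirm that $(M, *)$ lies in $\mathbf{LE}$, i.e., that $x*(x*(x*y)) = y$. In the entropic case, where $M$ is an abelian group, this is immediate from Proposition~\ref{Prop:EisQgps} applied to the CML-linear pique $\text{Lin}(M, 1_M, S)$, since $S^2 + S + 1 = 0$. The main obstacle is the remaining nonentropic distributive case, in which $M$ is a nonassociative commutative Moufang loop (of exponent $3$); here both $1_M$ and $S$ typically fail to be nuclear, as a short computation using $R^2 = R - 1$ yields $\{x + xS : x \in M\} = M$. Consequently $(M, *)$ is not, strictly speaking, a CML-linear pique in the sense of Definition~\ref{Def:LinQgp}, and Proposition~\ref{Prop:EisQgps} cannot be invoked verbatim. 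I would finish by direct computation, iterating $*$ three times to obtain $x * (x * (x * y)) = x + (xS + (xS^2 + y))$ (using $S^3 = 1$), then exploiting $xS + xS^2 = -x$ (valid in the two-generated subloop $\langle x, xS \rangle$, which is associative by CML-diassociativity) together with CML commutativity and the Moufang laws to reassociate and apply the loop inverse property $x + (-x + y) = y$. The empty and trivial distributive Mendelsohn quasigroups are principally isotopic to themselves and trivially lie in $\mathbf{LE}$.
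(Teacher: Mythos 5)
Your route is essentially the paper's: linearize via Belousov (Theorem \ref{Th:B&S}) to get $\text{Lin}(M,R)$ with $R$ nuclear and $R^2-R+1=0$, then exhibit an explicit principal isotopy to a reverse-linear operation whose right-hand multiplier satisfies $X^2+X+1=0$. Your specific choices differ only cosmetically: you use $x*y=x+yS$ with $S=-R$ and the isotopy $(R,R,1_M):(M,\cdot)\to(M,*)$, whereas the paper uses $x*y=-xR+yR^2$ and $(-1,-1,1_M)$ in the opposite direction (immaterial, since the inverse of a principal isotopy is principal). Your intertwining computation $(xR)*(yR)=xR-yR^2=xR+y(1-R)=x\cdot y$ is correct and needs no reassociation, and your observation that $1_M$ and $S$ fail to be nuclear on a nonassociative CML is correct and in fact sharper than the paper, which simply cites Proposition \ref{Prop:EisQgps} for $\text{Lin}(M,-R,R^2)$ even though $-R$ and $R^2$ are likewise non-nuclear there.

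The gap is in your proposed repair of exactly that point. To get $x*(x*(x*y))=y$ you must pass from $x+(xS+(xS^2+y))$ to $(x+xS+xS^2)+y$, and for arbitrary $y$ this is precisely the vanishing of the associator $(xS,\,xS^2,\,y)$; it does not follow from commutativity, the Moufang laws, or diassociativity, since diassociativity only governs the subloop generated by the two elements $x$ and $xS$, while $y$ is a third, arbitrary element -- this reassociation is the whole nonassociativity obstruction. What saves the statement is the hypothesis you never use after invoking Belousov: nuclearity of $R$. Since $x+xR\in N(M)$, one has $xS=-xR\equiv x \bmod N(M)$, and, because $N(M)$ is characteristic (or via $xS^2=xR-x=(x+xR)+x$ in exponent $3$), also $xS^2\equiv x \bmod N(M)$; in a CML the nucleus is central, so associators are unchanged when arguments are altered by nuclear elements, whence $(xS,xS^2,y)=(x,x,y)=0$. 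Then $x+(xS+(xS^2+y))=x+(-x+y)=y$ by the inverse property, and your argument closes. Without supplying this ingredient, the ``reassociate using Moufang'' step would fail.
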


\begin{proof}
The case of the empty quasigroup is true by the fact that this structure may be viewed as both distributive and left Eisenstein.  Let $\text{Lin}(M, R, 1-R)$ be a CML-linear pique which is Mendelsohn; by Theorem \ref{Th:B&S}, this is the same thing as fixing an arbitrary nonempty, distributive Mendelsohn quasigroup.  We claim $\text{Lin}(M, -R, R^2)$ lies in $\mathbf{LE},$ and $(-1, -1, 1)$ is a principal isotopy from $\text{Lin}(M, -R, R^2)$ onto the CML-linear pique $\text{Lin}(M, R, 1-R)$.
Note that $(R^2)^2+R^2+1=R^3(R)+R^2+1=-R+R^2+1=0,$ so $\text{Lin}(M, -R, R^2)$ is in $\mathbf{LE}$ by Proposition \ref{Prop:EisQgps}.  Furthermore, 
\begin{align*}
(x\cdot y)1_M&=-xR+yR^2 \\
&=-xR+y(R-1) \\
&=-xR+(-y)(1-R)\\
&=x(-1_M)*y(-1_M),
\end{align*}
verifies $(-1, -1, 1):\text{Lin}(M, -R, R^2)\to \text{Lin}(R, 1-R)$ is an isotopy.  
\end{proof}

\section*{Acknowledgements}
The author would like to thank his doctoral advisor, Jonathan D. H. Smith, for helping him to prepare the manuscript, and for pointing out the connection between the ramified classification problem of Section \ref{Sec:Ramified} and congruence subgroups.

\end{document}